\newcommand{\mm}{\mathfrak m}
\newcommand{\nn}{\mathfrak n}
\newcommand{\qq}{\mathfrak q}
\newcommand{\N}{\mathbb{N}}
\newcommand{\Q}{\mathbb{Q}}
\newcommand{\Fcc}{\mathcal{F}}
\DeclareMathOperator{\ann}{ann}
\DeclareMathOperator{\chara}{char}
\DeclareMathOperator{\Coker}{Coker}
\DeclareMathOperator{\depth}{depth}
\DeclareMathOperator{\gr}{gr}
\DeclareMathOperator{\Ker}{Ker}
\DeclareMathOperator{\lind}{ld}
\DeclareMathOperator{\linp}{lin}
\DeclareMathOperator{\pdim}{pd}
\DeclareMathOperator{\reg}{reg}
\DeclareMathOperator{\rstab}{rstab}
\DeclareMathOperator{\supp}{supp}
\DeclareMathOperator{\Tor}{Tor}
\newtheorem{thm}{\bf Theorem}[section]
\newtheorem{lem}[thm]{\bf Lemma}
\newtheorem{cor}[thm]{\bf Corollary}
\newtheorem{prop}[thm]{\bf Proposition}
\newtheorem{quest}[thm]{\bf Question}
\theoremstyle{definition}
\newtheorem{notn}[thm]{\bf Notation}
\theoremstyle{remark}
\newtheorem{rem}[thm]{Remark}
\theoremstyle{remark}
\newtheorem*{finrem*}{Final remark}
\newcommand{\vepsilon}{\varepsilon}
\numberwithin{equation}{section}
\title[Powers of fiber products]{Homological invariants of powers of fiber products}
\dedicatory{Dedicated to  Professor L\^e Tuấn Hoa on the occasion of his sixtieth birthday}
\author{Hop D. Nguyen}
\address{Institute of Mathematics, Vietnam Academy of Science and Technology, 18 Hoang Quoc Viet, 10307 Hanoi, Vietnam}
\email{ngdhop@gmail.com}
\author{Thanh Vu}
\address{Hanoi University of Science and Technology, 1 Dai Co Viet, Hai Ba Trung, Hanoi, Vietnam}
\email{vuqthanh@gmail.com}
\subjclass[2010]{13D02, 13C05, 13D05, 13H99}
\keywords{Powers of ideals; fiber product; depth; regularity; Castelnuovo--Mumford regularity, linearity defect.}
\begin{document}
\renewcommand{\abstractname}{Abstract}
\renewcommand{\appendixname}{Appendix}
\renewcommand\refname{References}
\renewcommand\proofname{Proof}

\begin{abstract}
Let $R$ and $S$ be polynomial rings of positive dimensions over a field $k$. Let $I\subseteq R, J\subseteq S$ be non-zero homogeneous ideals none of which contains a linear form. Denote by $F$ the fiber product of $I$ and $J$ in $T=R\otimes_k S$. We compute homological invariants of the powers of $F$ using the data of $I$ and $J$. Under the assumption that either $\chara k=0$ or $I$ and $J$ are monomial ideals, we provide explicit formulas for the depth and regularity of powers of $F$. In particular, we establish for all $s\ge 2$ the intriguing formula $\depth(T/F^s)=0$. If moreover each of the ideals $I$ and $J$ is generated in a single degree, we show that for all $s\ge 1$, $\reg F^s=\max_{i\in [1,s]}\{\reg I^i+s-i,\reg J^i+s-i\}$. Finally, we prove that the linearity defect of $F$ is the maximum of the linearity defects of $I$ and $J$, extending previous work of Conca and R\"omer. The proofs exploit the so-called Betti splittings of powers of a fiber product.
\end{abstract}

\maketitle

\section{Introduction}
In this paper, we are concerned with the theory of powers of ideals. There is many work on the asymptotic properties of powers of ideals; see, e.g., \cite{Br,Ch,CHT,EH,EU,HH2,HPV,Kod}. The reader is referred to a recent survey of Chardin \cite{Ch2} for an overview of work on regularity of powers of ideals. The novel feature of the present work is its focus on {\it exact} formulas for homological invariants of {\it all} the powers, under some fairly general circumstances.

Let $R,S$ be standard graded polynomial rings over a field $k$. Denote by $\mm$ and $\nn$ the corresponding graded maximal ideals of $R$ and $S$, and $T=R\otimes_k S$. Let $I\subseteq \mm^2, J\subseteq \nn^2$ be homogeneous ideals. By abuse of notation, we also denote by $I$ and $J$ the extensions of these ideals to $T$. In \cite{HTT} and \cite{NgV}, formulas for the depth and regularity of powers of $I+J$ were provided. One motivation for both papers is that the sum $I+J$ defines a fundamental operation on the $k$-algebras $R/I$ and $S/J$, namely their tensor product. Among others, fiber product is also a fundamental operation on $k$-algebras. Indeed, fiber products of algebras (and more generally, of rings) were investigated by many authors; see, for example, \cite{AAF,CR,DK,Les,Mo,NSW}. Now the fiber product of the $k$-algebras $R/I$ and $S/J$ is defined by $F=I+J+\mm\nn$, which accordingly is called the {\it fiber product} of $I$ and $J$. To avoid triviality, we will assume in the rest of this introduction that $\mm,\nn\neq 0$, i.e.~ $R$ and $S$ have positive Krull dimensions. Our goal is to study some important homological invariants of the powers of $F$ given the knowledge of $I$ and $J$.

For a finitely generated graded $R$-module $M$, we denote by $\reg M, \depth M$, and $\lind_R M$ the Castelnuovo-Mumford regularity, depth, and linearity defect of $M$, respectively (see Section \ref{sect_background}). Our main contributions are explicit formulas or sharp bounds for $\reg F^s,\depth F^s$, and $\lind_T F^s$ via the data of $I$ and $J$. 
\begin{thm}[Corollary \ref{cor_reg_equigenerated}]
\label{thm_main1}
Assume that either $\chara k=0$ or $I$ and $J$ are monomial ideals. Moreover, assume that each of $I$ and $J$ is generated by forms of the same degree. Then for all $s\ge 1$, there is an equality 
$$
\reg F^s=\max_{i\in [1,s]} \bigl\{\reg I^i+s-i,\reg J^i+s-i \bigr\}.
$$
\end{thm}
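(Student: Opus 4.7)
The plan is to reduce this corollary to a more general Betti splitting for powers of $F$. The key observation is the ideal decomposition
$$
F^s = (I + \mm\nn)^s + J \cdot F^{s-1},
$$
which holds because every product $I^a J^b (\mm\nn)^c$ with $a + b + c = s$ has either $b = 0$ (belonging to $(I + \mm\nn)^s$) or $b \ge 1$ (belonging to $JF^{s-1}$). A symmetric decomposition holds with the roles of $I$ and $J$ interchanged.

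The next step is to show that this decomposition is a \emph{Betti splitting} in the sense of Francisco--H\`a--Van Tuyl, so that
$$
\reg F^s = \max\bigl\{\reg(I + \mm\nn)^s,\ \reg JF^{s-1},\ \reg\bigl((I + \mm\nn)^s \cap JF^{s-1}\bigr) - 1\bigr\}.
$$
The assumptions ($\chara k = 0$, or $I$ and $J$ monomial) enter here to ensure that the $\Tor$ vanishings required for a Betti splitting actually hold: in the monomial case one can use the combinatorics of the lcm lattice, while in characteristic zero one can use polarization and general-position arguments to reduce to a monomial situation.

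With the Betti splitting in hand, I would compute each piece by iteration. Applying the same splitting idea with $\mm\nn$ in place of $J$ should give $\reg(I + \mm\nn)^s = \max_{i \in [1,s]}\{\reg I^i + s - i\}$; here the equigeneration hypothesis on $I$ ensures that $\reg I^i$ behaves monotonically enough for the max to be attained cleanly and for the intermediate products $I^a(\mm\nn)^c$ not to produce surplus regularity beyond the stated bound. Induction on $s$ and the symmetric splitting handle $\reg JF^{s-1}$, and a direct argument (or a second Betti splitting applied to the intersection itself) shows that the intersection term is dominated by the two main pieces, so it does not contribute to the final max.

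The main obstacle is twofold. First, verifying that the natural ideal decomposition is actually a Betti splitting and not merely a sum — this is where the monomial or characteristic hypothesis is essential, and forms the technical core of the paper. Second, proving the sharp bound $\reg(I+\mm\nn)^s \le \max_i\{\reg I^i + s - i\}$: since $\mm\nn$ is generated in degree $2$, the naive contribution of $(\mm\nn)^{s-i}$ to a product with $I^i$ would be $2(s-i)$, and obtaining the saving down to $s-i$ requires delicate cancellations in the Betti numbers coming from the linear resolution of $\mm\nn$ together with the Betti splitting. Once both points are in place, specializing to the equigenerated setting yields exactly the stated formula.
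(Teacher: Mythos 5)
Your high-level plan — use a Betti splitting of $F^s$ and extract the regularity of each piece — is exactly the right strategy, but the specific route you outline has several genuine gaps that the paper's argument carefully avoids.

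First, the decomposition. You propose $F^s = (I+\mm\nn)^s + J\cdot F^{s-1}$ and want to show it is a Betti splitting. The paper does \emph{not} use this decomposition: it first proves $F^s = H^s + \sum_{i=1}^s (\mm\nn)^{s-i}J^i$ (with $H=I+\mm\nn$), replacing each $H^{s-i}J^i$ by the much simpler $(\mm\nn)^{s-i}J^i$, which is valid precisely because $IJ \subseteq (\mm\nn)^2$ lets the extra $I$-factors be absorbed into a higher power of $H$. It then builds the stepwise filtration $G_0 = H^s \subset G_1 \subset \cdots \subset G_s = F^s$ with $G_t = G_{t-1} + (\mm\nn)^{s-t}J^t$, and shows each \emph{step} is a Betti splitting. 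This refinement is crucial: the single intersection $H^s \cap JF^{s-1}$ in your version is not obviously computable in closed form, whereas the stepwise intersections collapse to $G_{t-1}\cap(\mm\nn)^{s-t}J^t = \mm^{s-t+1}\nn^{s-t}J^t$ by an elementary argument using $\mm\cap J = \mm J$ in the tensor product. Without an explicit handle on the intersection, the formula $\reg F^s = \max\{\reg(\cdots), \reg(\cdots), \reg(\text{intersection})-1\}$ in your sketch cannot be turned into the stated answer.

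Second, the Tor-vanishing. You invoke the lcm lattice in the monomial case and ``polarization and general-position arguments'' in characteristic zero, but this is not how the paper gets the required vanishing. The actual engine is a differential criterion: in characteristic zero, $\partial(I_1)\subseteq I_2$ forces the inclusion $I_1\to I_2$ to be Tor-vanishing (Ahangari Maleki), and there is a monomial analogue with $\partial^*$ (Nguyen--Vu). The relevant application is Lemma~\ref{lem_Torvanishing}: the map $\mm^{s-t}I^t \to \mm^{s-t+1}I^{t-1}$ is Tor-vanishing because $\partial(\mm^{s-t}I^t) \subseteq \mm^{s-t-1}I^t + \mm^{s-t}\partial(I)I^{t-1} \subseteq \mm^{s-t+1}I^{t-1}$ (using $I\subseteq\mm^2$). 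No polarization or genericity is involved, and indeed the question of whether the result holds in positive characteristic for non-monomial ideals is left open by the paper; a genericity argument would presumably not be characteristic-sensitive, which is a sign that route is not available.

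Third, and most importantly, the ``saving'' from $2(s-i)$ to $s-i$. You flag this as the main obstacle and speak of delicate cancellations in Betti numbers, but the paper resolves it with a clean structural observation, not a cancellation. From the Betti splitting, $\Tor^T_\bullet(k,G_{t-1}) \to \Tor^T_\bullet(k,G_t)$ is injective, so the short exact sequence $0 \to G_{t-1} \to G_t \to (\mm\nn)^{s-t}J^t/\mm^{s-t+1}\nn^{s-t}J^t \to 0$ gives $\reg G_t = \max\{\reg G_{t-1},\, \reg(\text{cokernel})\}$; the cokernel is $(\mm^{s-t}/\mm^{s-t+1})\otimes_k \nn^{s-t}J^t$, a $k$-vector space tensored against the $S$-module $\nn^{s-t}J^t$, so its regularity is $(s-t)+\reg(\nn^{s-t}J^t)$, not $2(s-t)+\cdots$. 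That is where the linear shift $s-i$ comes from. The equigeneration hypothesis then enters only at the very end, via Lemma~\ref{lem_maxideal_reg}(ii), to reduce $\reg(\mm^{s-i}I^i)+s-i$ to $\reg I^i+s-i$ (and to absorb the ``trivial'' contribution $2s+(p-2)i \le \reg I^s$). Without this two-stage structure — prove the general formula of Theorem~\ref{thm_reg_fiberprod_polynomialrings} involving $\reg(\mm^{s-i}I^i)$, then specialize — your proposal has no mechanism to actually establish the bound you need.
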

We also have an explicit formula for $\reg F^s$ even if $I$ and $J$ are not generated by forms of the same degree (Theorem \ref{thm_reg_fiberprod_polynomialrings}). The general formula is slightly more complicated; after all, the conclusion of Theorem \ref{thm_main1} no longer holds true in full generality.

Lescot's work \cite{Les} on fiber products of local rings implies the formula
$$
\depth(T/F)=\min\{1,\depth (R/I), \depth(S/J)\}.
$$ 
The study of $\depth (T/F^s)$ provides the following (at least to us) surprising result.
\begin{thm}[Theorem \ref{thm_depth_fiberprod_polynomialrings}]
\label{thm_main2}
Assume that either $\chara k=0$ or $I$ and $J$ are monomial ideals. Moreover, assume that either $I$ or $J$ is non-zero. Then for all $s\ge 2$, there is an equality  $\depth (T/F^s)=0$.
\end{thm}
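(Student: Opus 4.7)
The plan is to exhibit an explicit socle element of $T/F^s$: an element whose class is nonzero in $T/F^s$ but annihilated by $\mm+\nn$. Its existence places $\mm+\nn$ in $\Ass(T/F^s)$ and gives $\depth(T/F^s)=0$ (the reverse inequality being trivial).

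By symmetry between $I$ and $J$, I assume $I\ne 0$. Let $d:=\min\{\deg g : g\in I\setminus\{0\}\}$; the hypothesis $I\subseteq\mm^2$ forces $d\ge 2$. Pick a nonzero $g\in I_d$ and a nonzero linear form $y\in\nn_1$ (which exists since $S$ has positive Krull dimension), and set
$$
f := g^{s-1}\,y.
$$
Since $R$ is a domain, $g^{s-1}\ne 0$, so $f$ is nonzero and lies in bidegree $((s-1)d,\,1)$ with respect to the natural bigrading on $T=R\otimes_k S$.

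To see $f\notin F^s$, expand $F^s=\sum_{a+b+c=s}I^aJ^b(\mm\nn)^c$. The bihomogeneous generators of $I^aJ^b(\mm\nn)^c$ lie in $R$-degree $\ge 2a+c$ and $S$-degree $\ge 2b+c$. Hence elements of $S$-degree $1$ can come only from $(a,b,c)=(s,0,0)$ or $(s-1,0,1)$: the summand $I^s$ is concentrated in $S$-degree $0$ and contributes nothing, while $I^{s-1}\mm\nn$ has all bihomogeneous pieces in $R$-degree $\ge (s-1)d+1$, because preserving $S$-degree amounts to multiplying generators by elements of $R$, which can only raise the $R$-degree. Thus $(F^s)_{((s-1)d,\,1)}=0$, so $f\notin F^s$.

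For $(\mm+\nn)f\subseteq F^s$: if $x\in\mm$ then $xf=g^{s-1}(xy)\in I^{s-1}(\mm\nn)\subseteq F^s$, corresponding to $(a,b,c)=(s-1,0,1)$. If $y'\in\nn$ then, using $s\ge 2$ and $I\subseteq\mm^2$,
$$
y'f = g^{s-2}\cdot g\cdot(y'y)\in I^{s-2}\,\mm^2\nn^2 = I^{s-2}(\mm\nn)^2\subseteq F^s,
$$
corresponding to $(a,b,c)=(s-2,0,2)$, with $I^0$ interpreted as $T$ when $s=2$. The main obstacle is the bidegree bookkeeping in the non-membership step, which falls out of initial-degree estimates once the decomposition of $F^s$ into the pieces $I^aJ^b(\mm\nn)^c$ is made explicit. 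Notably, this argument uses neither $\chara k=0$ nor that $I,J$ are monomial; I expect that hypothesis is only needed for the paper's regularity-type theorems.
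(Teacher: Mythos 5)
Your proof is correct, and it takes a genuinely different (and more elementary) route than the paper. The paper proves this via the Betti splitting machinery of Proposition \ref{prop_Betti-splitting_power}: it establishes $\depth F^s = \min_{i\in[1,s]}\{2,\depth(\mm^{s-i}I^i),\depth(\nn^{s-i}J^i)\}$ and then invokes $\depth(\mm^{s-i}I^i)=1$ for $1\le i\le s-1$, which is why the characteristic-zero/monomial hypothesis enters -- it is needed for the Tor-vanishing lemma underpinning the splitting. You instead directly exhibit a nonzero socle element $\bar f\in T/F^s$ with $f=g^{s-1}y$; once the bidegree bookkeeping is done, membership of $(\mm+\nn)f$ in $F^s$ and non-membership of $f$ in $F^s$ are both short computations. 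Your argument uses only that $R$ is a domain and the natural $(\Z_{\ge 0})^2$-grading of $T=R\otimes_k S$, so it is characteristic-free and drops the monomiality hypothesis entirely; this actually answers, for the depth formula, the question the paper raises at the end of Section \ref{sect_lind} about whether the hypothesis on $k$ can be removed. Two tiny phrasing quibbles in your non-membership step: (a) it is clearer to observe that $F^s$ is bihomogeneous and that $(F^s)_{*,1}=(I^sT)_{*,1}+(I^{s-1}\mm\nn)_{*,1}$, rather than saying ``$I^s$ is concentrated in $S$-degree $0$'' (the extended ideal $I^sT$ is not); (b) the clean reason both terms miss bidegree $((s-1)d,1)$ is that $(I^sT)_{*,1}=(I^s)\otimes_k S_1$ requires $R$-degree $\ge sd>(s-1)d$, while $(I^{s-1}\mm\nn)_{*,1}=(I^{s-1}\mm)\otimes_k\nn_1$ requires $R$-degree $\ge (s-1)d+1$. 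With that cleanup the argument is airtight.
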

We recall that the linearity defect measures $\lind_R M$ how far $M$ is from having a linear free resolution: $M$ is componentwise linear in the sense of Herzog and Hibi \cite{HH} if and only if $\lind_R M=0$. Our third main result is
\begin{thm}[Theorems {\ref{thm_fpKoszul}} and {\ref{thm_ld_fiberprod_polynomialrings}}]
\label{thm_main3}
We always have $\lind_T F=\max\{\lind_R I,\lind_S J\}$.

Assume further that either $\chara k=0$ or $I$ and $J$ are monomial ideals. Then for all $s\ge 2$, there are inequalities 
\[
\max\{\lind_R I^s,\lind_S J^s\} \le \lind_T F^s \le \max_{i\in [1,s]} \bigl\{\lind_R (\mm^{s-i}I^i),\lind_S (\nn^{s-i}J^i)\bigr\}.
\]
\end{thm}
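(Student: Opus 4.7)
\medskip

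\noindent\textbf{Plan.} The plan for both parts is to exploit Betti splittings of $F$, respectively of $F^s$, in a way parallel to the arguments used for depth and regularity in Theorems~\ref{thm_main1} and \ref{thm_main2}, and to combine these with change-of-rings reductions that account for the mixed structure of $T=R\otimes_k S$.

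\medskip

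\noindent\textbf{Part 1: the equality for $F$.} I would derive the upper bound from the exact sequence
$$
0\to I+J\to F\to (\mm/I)\otimes_k(\nn/J)\to 0,
$$
obtained from the intersection identity $\mm\nn\cap(I+J)=\mm J+I\nn$ together with the isomorphism $\mm\nn/(\mm J+I\nn)\cong(\mm/I)\otimes_k(\nn/J)$. The third term is a $k$-tensor product of an $R/I$-module and an $S/J$-module, so a Künneth-type computation over $T=R\otimes_k S$ expresses its linearity defect in terms of $\lind_R(\mm/I)$ and $\lind_S(\nn/J)$. The first term $I+J$ is analyzed by the companion splitting $0\to IJ\to I\oplus J\to I+J\to 0$, using $I\cap J=IJ$ in $T$ (which follows from $R$ and $S$ sharing no variables). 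Feeding both sequences into the standard inequality $\lind(A+B)\le\max\{\lind A,\lind B,\lind(A\cap B)+1\}$ and simplifying yields the upper bound $\max\{\lind_R I,\lind_S J\}$. For the lower bound, the split surjections $T\twoheadrightarrow R$ and $T\twoheadrightarrow S$ (setting the other set of variables to zero) send $F$ onto $I$ and $J$ respectively, exhibiting $R/I$ and $S/J$ as retracts of $T/F$ over $R$ and $S$; combined with the flat base change $R\hookrightarrow T$ this delivers $\lind_R I,\lind_S J\le\lind_T F$.

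\medskip

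\noindent\textbf{Part 2: the bounds for $F^s$.} The lower bound is immediate by the same retraction argument: the split projections $T\twoheadrightarrow R,S$ carry $F^s$ to $I^s$, $J^s$, since $J$ and $\mm\nn$ lie in the respective augmentation ideals. The upper bound is the main work. I would iterate the fundamental decomposition
$$
F^s=I^s+(J+\mm\nn)F^{s-1},
$$
together with its symmetric counterpart $F^s=J^s+(I+\mm\nn)F^{s-1}$, unfolding it $s$ times so that the resulting atoms of the filtration are precisely $R$-ideals of the form $\mm^{s-i}I^i$ and $S$-ideals of the form $\nn^{s-i}J^i$, for $i\in[1,s]$, possibly tensored over $k$ with a module coming from the opposite side. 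To turn each decomposition into a Betti splitting, one needs the intersection identities
$$
I^aF^{s-a}\cap (J+\mm\nn)^{s-a}F^a=\bigl(\text{combinations of }\mm^cI^a\text{ and }\nn^cJ^b\bigr),
$$
which follow from the transversality of $R$- and $S$-ideals in $T$ and the factorizations $(\mm\nn)^c=\mm^c\nn^c$. Applying the Betti-splitting bound $\lind_T(A+B)\le\max\{\lind_T A,\lind_T B,\lind_T(A\cap B)+1\}$ recursively, and using the Künneth-type identity that a $k$-tensor product of an $R$-module and an $S$-module has $T$-linearity defect equal to the sum (hence controlled by the maximum of the summands), reduces the problem to bounding the $\lind$ of each atom by $\lind_R(\mm^{s-i}I^i)$ or $\lind_S(\nn^{s-i}J^i)$.

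\medskip

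\noindent\textbf{Where the characteristic/monomial hypothesis enters, and the main obstacle.} The hypothesis that either $\chara k=0$ or $I,J$ are monomial ideals enters, as in Theorems~\ref{thm_main1} and \ref{thm_main2}, in identifying the minimal generators of intermediate intersections and in ensuring that the products $\mm^{s-i}I^i$ behave well under passage from $R$ to $T$ (so that $\lind_T(\mm^{s-i}I^i\cdot T)=\lind_R(\mm^{s-i}I^i)$ is forced by the flat extension). The main obstacle is combinatorial: identifying the correct cascade of Betti splittings of $F^s$, and verifying that at every step the relevant intersections can be expressed as sums of pure $R$-side and pure $S$-side pieces of the form $\mm^{s-i}I^i$ and $\nn^{s-i}J^i$, rather than genuinely mixed terms like $I^aJ^b\mm^c\nn^c$. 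Once the bookkeeping of these intersections is in place, the inequality follows from the recursive $\lind$-estimate and the Künneth formula.
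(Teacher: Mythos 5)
Your general strategy — reduce via Betti splittings of $F$ and of $F^s$, combine with retractions for the lower bound and Künneth for pure tensor pieces — is the right one, and your retraction argument for the lower bound is exactly what the paper does via the graded version of Lemma~\ref{lem_retract}. However, there is a genuine gap in how you extract the \emph{sharp} upper bound, and it is not merely a matter of bookkeeping.

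The inequality you plan to iterate, $\lind_T(A+B)\le\max\{\lind_T A,\lind_T B,\lind_T(A\cap B)+1\}$, cannot give the stated bound in general because of the $+1$. Consider already your analysis of $I+J$ via $0\to IJ\to I\oplus J\to I+J\to 0$: since $IJ=I\otimes_k J$ in $T$, Lemma~\ref{lem_tensor} gives $\lind_T(IJ)=\lind_R I+\lind_S J$, and your inequality yields only $\lind_T(I+J)\le\lind_R I+\lind_S J+1$, which is strictly larger than $\max\{\lind_R I,\lind_S J\}$ whenever both defects are positive. The same $+1$ reappears at every stage of your recursion for $F^s$. The move the paper makes (and which your proposal omits) is to work not with the Mayer--Vietoris sequence $0\to A\cap B\to A\oplus B\to A+B\to 0$, but with the short exact sequence $0\to A\to A+B\to B/(A\cap B)\to 0$, and then to exploit the defining Tor-injectivity of a Betti splitting (Lemma~\ref{lem_criterion_Bettisplit}) to conclude that the connecting map $\Tor^T_{i+1}(k,B/(A\cap B))\to\Tor^T_i(k,A)$ is zero for all $i$. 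Then Lemma~\ref{lem_exactseq}, with $d_M=0$ in its notation, delivers $\lind_T(A+B)\le\max\{\lind_T A,\,\lind_T(B/(A\cap B))\}$ with no $+1$. In the paper's filtration the quotients $B/(A\cap B)$ are pure $k$-tensor pieces such as $J/\mm J$ and $(\mm\nn)^{s-t}J^t/(\mm^{s-t+1}\nn^{s-t}J^t)$, and their $T$-linearity defects reduce by Lemma~\ref{lem_tensor} to $\lind_S J$ and $\lind_S(\nn^{s-t}J^t)$ respectively — exactly the ingredients of the claimed bound.

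Concretely, the paper's proof sets $H=I+\mm\nn$ and, for $s\ge 2$, uses Proposition~\ref{prop_Betti-splitting_power}: the identity $F^s=H^s+\sum_{i=1}^s(\mm\nn)^{s-i}J^i$, the computation $G_{t-1}\cap(\mm\nn)^{s-t}J^t=\mm^{s-t+1}\nn^{s-t}J^t$ for the filtration $G_t=H^s+\sum_{i=1}^t(\mm\nn)^{s-i}J^i$, and the fact (here is where the char~$0$/monomial/Koszul hypothesis enters, through Lemma~\ref{lem_Torvanishing}) that each $G_t=G_{t-1}+(\mm\nn)^{s-t}J^t$ is a Betti splitting. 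Your proposed recursion $F^s=I^s+(J+\mm\nn)F^{s-1}$ is not the decomposition used, and it is not clear it stabilizes on the required atoms; more importantly, you would still need the sharper Lemma~\ref{lem_exactseq}-style estimate rather than the $+1$ inequality to close the argument. Finally, you also need to know $\lind_T(\mm\nn)^s=0$ (again via Lemma~\ref{lem_tensor}) to discharge the final atom $H^s\supseteq(\mm\nn)^s$; this is implicit in the Künneth step but should be made explicit.
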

The first assertion of Theorem \ref{thm_main3} generalizes a result of Conca and R\"omer \cite[Theorem 4.1]{CR} which says that $\lind_T F=0$ if and only if $\lind_R I=\lind_S J=0$. We anticipate that the inequality on the right in Theorem \ref{thm_main3} is always an equality, namely for all $s\ge 2$, $\lind_T F^s = \max_{i\in [1,s]} \bigl\{\lind_R (\mm^{s-i}I^i),\lind_S (\nn^{s-i}J^i)\bigr\}$.

We are tempted to conjecture that our main results remain valid in {\it positive characteristic}. 

Similarly to \cite{NgV}, our main tool for proving Theorems \ref{thm_main1}--\ref{thm_main3} is the Tor-vanishing of certain injective maps. Let $\phi: M\to N$ be a map of finitely generated graded $R$-modules. We say that $\phi$ is {\it Tor-vanishing} if $\Tor^R_i(k,\phi)=0$ for all $i$. If $\phi$ is injective, the Tor-vanishing of $\phi$ implies strong relationship between various invariants of $M,N$ and $\Coker \phi$. For that reason, the explicit formulas for the depth and regularity of $(I+J)^s$ in \cite{NgV} follow from 
\begin{lem}[Ahangari Maleki {\cite[Proof of Theorem 2.5]{A}}, Nguyen--Vu {\cite[Theorem 4.5]{NgV}}]
\label{lem_Torvanishing_sum}
Let $(R,\mm)$ be a polynomial ring over $k$, and $I\subseteq \mm$ a homogeneous ideal. Assume that either $\chara k=0$ or $I$ is a monomial ideal. Then for all $s\ge 1$, the map $I^s\to I^{s-1}$ is Tor-vanishing.
\end{lem}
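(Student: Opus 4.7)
The plan is to prove the lemma by handling the two hypotheses in sequence, with the cleanest strategy being different in each case.

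\emph{Case 1 ($\chara k = 0$).} I would construct a chain-level null-homotopy of the inclusion $\iota\colon I^s \hookrightarrow I^{s-1}$ on the Koszul complex $K = K(x_1,\dots,x_n;R)$, which computes $\Tor^R_\ast(k,-)$. The starting observation is that $\partial_j I^s \subseteq I^{s-1}$ for each variable $x_j$: by Leibniz, $\partial_j$ applied to any product $g_1\cdots g_s$ of elements of $I$ yields a sum of $(s-1)$-fold products of elements of $I$, each multiplied by a derivative in $R$. Hence the partial derivatives extend to $k$-linear operators of homological degree $0$ from $K\otimes_R I^s$ to $K\otimes_R I^{s-1}$. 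Define
\[
H(f \cdot e_I) \;=\; \sum_{j=1}^n (\partial_j f)\cdot (e_j \wedge e_I).
\]
The identities $d(e_j\wedge e_I) = x_j\, e_I - e_j\wedge d(e_I)$ and $\partial_j\, d - d\,\partial_j = \iota_{e_j^\ast}$ (contraction by $e_j^\ast$), together with the Euler formula $\sum_j x_j \partial_j f = (\deg f)\,f$, should yield
\[
(dH + Hd)(f\cdot e_I) \;=\; (\deg f + |I|)\cdot \iota(f\cdot e_I).
\]
Since $I \subseteq \mm$, every non-zero $f \in I^s$ has positive degree, so $\deg f + |I| \ge 1$ is invertible in $k$. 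Rescaling $H$ by the corresponding scalar on each bigraded piece gives a genuine null-homotopy $h$ with $dh + hd = \iota$, from which Tor-vanishing of $\iota$ follows at once.

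\emph{Case 2 ($I$ monomial).} The rescaling step above is not available in positive characteristic, so I would exploit the fine $\Z^n$-grading instead. For each multidegree $\alpha$, the multigraded Betti numbers of a monomial ideal are computed combinatorially by the upper Koszul complex (equivalently, Hochster's formula). My plan would be to show that, at each $\alpha$, the simplicial chain map induced by $\iota$ factors through a contractible subcomplex, hence is null-homotopic. Polarizing $I$ to a squarefree monomial ideal in a larger polynomial ring reduces the combinatorics to the Stanley--Reisner setting, where this verification is cleanest and does not require inverting anything in $k$.

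\emph{Main obstacle.} In Case 1 the essential technicality is the sign bookkeeping needed to verify the identity $dH + Hd = (\deg + |I|)\cdot \iota$; this requires carefully tracking Koszul signs through the (anti)commutation of $\partial_j$ with the Koszul differential, although once packaged via super-commutators it becomes conceptually short. In Case 2 the main difficulty is to produce a uniform, characteristic-independent combinatorial description of the contracting cone at each multidegree, in such a way that the argument works simultaneously for all $s$ and does not simply reprove the $\Tor_0$ statement (which is trivial: any minimal monomial generator of $I^s$ already factors as $x_j \cdot m'$ with $m' \in I^{s-1}$). This is presumably where the Betti-splitting machinery emphasised elsewhere in the paper plays its role.
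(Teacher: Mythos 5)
The paper does not prove this lemma directly; it cites Ahangari Maleki \cite[Proof of Theorem 2.5]{A} for the characteristic-zero case and Nguyen--Vu \cite[Theorem 4.5]{NgV} for the monomial case, and packages the key mechanism as the two ``differential criteria'' Lemmas \ref{lem_differential_criterion} and \ref{lem_differential_criterion_monomial}. What you propose is in effect a reconstruction of the proofs behind those citations, so your route is legitimate but parallel rather than independent.

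Your Case~1 is correct: the computation $\delta H + H\delta = (\deg f + |I|)\cdot\iota$ on $f\cdot e_I$ does go through once one notices that the ``cross terms'' $\sum_j(\partial_j f)\,e_j\wedge \delta(e_I)$ coming from $\delta H$ cancel against the Leibniz terms in $H\delta$, while the product rule's $\delta_{j,i_k}$-contributions contribute $|I|\cdot f\cdot e_I$ and the Euler relation $\sum_j x_j\partial_j f=(\deg f)\,f$ supplies the rest. The multiplier $\deg f + |I|$ is exactly the internal degree of $f\cdot e_I$ in $K\otimes_R I^s$, which is always $\ge 1$ because $I\subseteq\mm$, so the $k$-linear rescaling on each bigraded piece is well-defined in characteristic zero. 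This is essentially Ahangari Maleki's argument, and what it \emph{buys} is precisely the criterion stated in Lemma \ref{lem_differential_criterion}: the only input you used about $I^s\to I^{s-1}$ is that $\partial(I^s)\subseteq I^{s-1}$.

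Your Case~2 is sound in outline but too vague at exactly the step that matters. The right combinatorial fact to isolate is the monomial analogue $\partial^*(I^s)\subseteq I^{s-1}$ (every monomial $g\in I^s$ with $x_j\in\supp g$ satisfies $g/x_j\in I^{s-1}$). With it, the upper Koszul complex at any multidegree $\alpha\neq 0$ has, for each $j\in\supp\alpha$, the property that $K^\alpha(I^s)\ast\{j\}\subseteq K^\alpha(I^{s-1})$: indeed for $\sigma\in K^\alpha(I^s)$ and $j\notin\sigma$ we have $x^\alpha/x^{\sigma\cup\{j\}}=(x^\alpha/x^\sigma)/x_j\in\partial^*(I^s)\subseteq I^{s-1}$. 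Hence the inclusion $K^\alpha(I^s)\hookrightarrow K^\alpha(I^{s-1})$ factors through the cone with apex $j$ and is zero on reduced homology, giving $\Tor_i^R(k,\iota)_\alpha=0$. Once this is said, the polarization detour is unnecessary, and the uniformity-in-$s$ worry you raise disappears because the cone apex $j$ can be any element of $\supp\alpha$. Finally, a small misattribution: the Betti-splitting machinery has nothing to do with this lemma; in the paper it enters only downstream, when decomposing $F^s$ (Proposition \ref{prop_Betti-splitting_power}), whereas the present lemma is an input to, not a consequence of, those splittings.
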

For the same reason, Theorems \ref{thm_main1}--\ref{thm_main3} in the present paper follow from
\begin{lem}[Lemma \ref{lem_Torvanishing}]
\label{lem_Torvanishing_fiberprod}
Let $(R,\mm)$ be a polynomial ring over $k$, and $I\subseteq \mm^2$ a homogeneous ideal. Assume that either $\chara k=0$ or $I$ is a monomial ideal. Then for all $1\le t\le s$, the map $\mm^{s-t}I^t\to \mm^{s-t+1}I^{t-1}$ is Tor-vanishing.
\end{lem}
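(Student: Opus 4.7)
The plan is to adapt the Koszul-level construction behind the proof of Lemma \ref{lem_Torvanishing_sum} (\cite{A}, \cite{NgV}) to our mixed-product situation. Write $M=\mm^{s-t}I^t$, $N=\mm^{s-t+1}I^{t-1}$, and let $\iota\colon M\hookrightarrow N$ denote the inclusion. A first observation, which already gives the vanishing of $\Tor_0^R(k,\iota)$, is that the hypothesis $I\subseteq\mm^2$ yields
\[
M=\mm^{s-t}\cdot I\cdot I^{t-1}\subseteq \mm^{s-t+2}I^{t-1}=\mm N,
\]
so $\iota$ already factors through $\mm N$. The real task is to promote this factorization to a chain-level null-homotopy for $\iota\otimes_R k$ on the Koszul complex $K_\bullet=K(x_1,\dots,x_n;R)$.

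The essential new ingredient in characteristic zero is the following derivation identity: for every $i=1,\dots,n$,
\[
\partial_i(M)\subseteq N,\qquad \partial_i=\partial/\partial x_i.
\]
To verify it, write $f\in M$ as a finite sum of products $hg$ with $h\in\mm^{s-t}$ and $g\in I^t$. By the Leibniz rule, $\partial_i(hg)=\partial_i(h)g+h\partial_i(g)$. For the first summand, $\partial_i$ lowers the $\mm$-power of $h$ by one and $I^t\subseteq \mm^2 I^{t-1}$, which together give $\partial_i(h)g\in\mm^{s-t+1}I^{t-1}=N$. For the second, writing $g=g_1\cdots g_t$ with $g_k\in I\subseteq\mm^2$, each $\partial_i(g_k)\in\mm$, so the product rule yields $\partial_i(g)\in\mm I^{t-1}$ and hence $h\partial_i(g)\in\mm^{s-t+1}I^{t-1}=N$. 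Combined with Euler's identity $\deg(f)\cdot f=\sum_i x_i\partial_i(f)$, which forces the assumption $\chara k=0$, this identity feeds into the Koszul homotopy formula of \cite{A, NgV} to produce
\[
h\colon K_\bullet\otimes_R M\longrightarrow K_{\bullet+1}\otimes_R N,\qquad dh+hd=\iota\otimes 1,
\]
establishing the Tor-vanishing.

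In the monomial case, the blueprint is executed combinatorially: for each monomial generator $\mu$ of $M$ one selects a variable $x_i$ dividing the ``$\mm^{s-t}$-factor'' of $\mu$ and replaces $\partial_i$ by the discrete ``division by $x_i$'' operation. The derivation identity, the Euler-type identity, and the contracting homotopy all admit direct monomial analogs, as in the corresponding portion of the proof of \cite[Theorem 4.5]{NgV}.

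The main obstacle will be the sign bookkeeping in verifying $dh+hd=\iota$ on the Koszul complex; this is essentially the routine calculation already carried out for the proof of Lemma \ref{lem_Torvanishing_sum}. The genuinely new conceptual input is the derivation identity $\partial_i(M)\subseteq N$, and it is precisely there that we use the stronger hypothesis $I\subseteq\mm^2$ in place of $I\subseteq\mm$.
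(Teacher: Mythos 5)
Your argument is essentially the same as the paper's: the crux in both is the Leibniz--rule verification that $\partial(\mm^{s-t}I^t)\subseteq\mm^{s-t+1}I^{t-1}$ (respectively $\partial^*(\mm^{s-t}I^t)\subseteq\mm^{s-t+1}I^{t-1}$ in the monomial case), using $I\subseteq\mm^2$, and you carry this out correctly. The only difference is that you re-sketch the Koszul-homotopy machinery underlying the differential criterion rather than simply invoking Lemmas~\ref{lem_differential_criterion} and~\ref{lem_differential_criterion_monomial}, which the paper has already recorded as black boxes; citing those would close your argument without any sign bookkeeping.
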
 
Speaking more precisely, the importance of Lemma \ref{lem_Torvanishing_sum} to \cite{NgV} is that it gives rise to certain decomposition, the so-called {\it Betti splitting} of $(I+J)^s$. (Betti splittings are discussed in Section \ref{sect_background_Bettisplit}.) In the same manner, Lemma \ref{lem_Torvanishing_fiberprod} gives rise to certain Betti splitting of $F^s$, from which we deduce Theorems \ref{thm_main1}--\ref{thm_main3}.

It is well-known that for all sufficiently large $s$, $\reg I^s$ is a linear function \cite{CHT, Kod}. From Theorem \ref{thm_main1}, we can determine the asymptotic value of $\reg F^s$ as follows. 
\begin{cor}[Corollary \ref{cor_asymptotic_reg}]
\label{cor_main_asymptotic_reg}
Assume that either $\chara k=0$ or $I$ and $J$ are monomial ideals. Assume that, moreover, each of $I$ and $J$ is generated in a single degree. Then for all $s\gg 0$, there is an equality $\reg F^s=\max\{\reg I^s,\reg J^s\}$.
\end{cor}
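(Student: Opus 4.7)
The plan is to combine the formula of Theorem \ref{thm_main1} with the classical asymptotic linearity of $\reg I^s$ due to Cutkosky--Herzog--Trung and Kodiyalam. By Theorem \ref{thm_main1}, for all $s\ge 1$,
$$
\reg F^s=\max_{i\in [1,s]} \bigl\{\reg I^i+s-i,\reg J^i+s-i \bigr\},
$$
so the corollary reduces to showing that, for $s\gg 0$, the maximum on the right-hand side is attained at $i=s$. Since $I$ is generated in a single degree $d_I$, and $I\subseteq\mm^2$ forces $d_I\ge 2$, there exist a constant $e_I$ and an integer $N_I$ with $\reg I^i=d_I\cdot i+e_I$ for all $i\ge N_I$; similarly for $J$ with $d_J\ge 2$, $e_J$, $N_J$.

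First I would analyze $g_I(i):=\reg I^i+s-i$ on the tail range $i\in [N_I,s]$, where $g_I(i)=(d_I-1)i+e_I+s$ is strictly increasing in $i$ because $d_I-1\ge 1$. Hence on this range the maximum is attained at $i=s$, with value $\reg I^s$. Next I would bound $g_I$ on the initial range $[1,N_I-1]$: this is a finite set, so the quantity $C_I:=\max_{1\le i\le N_I-1}\reg I^i$ is a constant independent of $s$, and $g_I(i)\le C_I+s-1$ throughout this range. On the other hand $g_I(s)=\reg I^s=d_I s+e_I\ge 2s+e_I$, which for $s\gg 0$ exceeds $C_I+s-1$. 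Therefore $\max_{i\in [1,s]}g_I(i)=\reg I^s$ for all sufficiently large $s$.

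The same reasoning applies verbatim to $J$, and taking the outer maximum gives $\reg F^s=\max\{\reg I^s,\reg J^s\}$ for $s\gg 0$, as desired. The hard part, such as it is, lies in recognising that the hypothesis of equigeneration, together with $I\subseteq\mm^2$, is exactly what is needed: it gives the asymptotic slope $d_I-1\ge 1$, which is precisely the threshold allowing the $+s-i$ correction to be overpowered by $\reg I^i$. Without equigeneration one has only subadditivity-type estimates on $\reg I^i$ and the argument breaks down, which is consistent with the more elaborate general formula provided by Theorem \ref{thm_reg_fiberprod_polynomialrings}.
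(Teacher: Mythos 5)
Your proof is correct, and the key idea — that equigeneration in degree at least $2$ gives $\reg I^i$ an eventual slope $d_I\ge 2$, so that $\reg I^i+s-i=(d_I-1)i+e_I+s$ is increasing in $i$ on the tail and overwhelms the finitely many initial terms — is exactly what drives the result. The route you take differs slightly from the paper's: you plug the {\it simplified} formula of Corollary \ref{cor_reg_equigenerated} (Theorem \ref{thm_main1}) directly, which is available precisely because of equigeneration, whereas the paper's proof of the body-result Corollary \ref{cor_asymptotic_reg} works with the more general formula of Theorem \ref{thm_reg_fiberprod_polynomialrings} involving $\reg(\mm^{s-i}I^i)$ and then uses Lemma \ref{lem_maxideal_reg} to control those terms. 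The reason the paper takes the longer path is that Corollary \ref{cor_asymptotic_reg} is proved under a broader hypothesis (it also allows the subideal in degree $2$ to be integrally closed, e.g.\ squarefree monomial ideals not equigenerated), and in that generality it must juggle the Kodiyalam slope $p$ separately from the generating degrees. For the introductory Corollary \ref{cor_main_asymptotic_reg}, which only asserts the equigenerated case, your argument is a streamlined and entirely valid specialization: equigeneration pins down $p=d_I$, and the two-range analysis (tail versus a bounded initial segment) you describe is essentially the paper's argument, organized more transparently.
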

We present an example showing that the conclusion of Corollary \ref{cor_main_asymptotic_reg} is not true for arbitrary $I$ and $J$ (Remark \ref{rem_counterexample_reg}). Denote the first point where $\reg I^s$ becomes a linear function by $\rstab(I)$. If all the minimal homogeneous generators of $I$ and $J$ have the same degree, we can present a sharp upper bound for $\rstab(F)$. Somewhat unexpectedly, $\rstab(F)$ can be arbitrarily larger than $\max\{\rstab(I),\rstab(J)\}$ (Remark \ref{rem_rstab}).

Let us mention two applications of our main results. Both require no assumption on the characteristic. 
\begin{cor}[Corollary \ref{cor_Koszul_powers}]
\label{cor_main_asymptotically_Koszul}
For every $s\ge 1$, the following statements are equivalent:
\begin{enumerate}[\quad \rm(i)]
\item $F^i$ is componentwise linear for all $1\le i\le s$;
\item $I^i$ and $J^i$ are componentwise linear for all $1\le i\le s$.
\end{enumerate}
In particulars, all the powers of $F$ are componentwise linear if and only if all the powers of $I$ and of $J$ are so.
\end{cor}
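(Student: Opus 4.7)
The plan is to induct on $s \geq 1$. The base case $s=1$ is the first, characteristic-free assertion of Theorem \ref{thm_main3}, namely $\lind_T F = \max\{\lind_R I, \lind_S J\}$; since componentwise linearity of a module is equivalent to vanishing of its linearity defect, this at once gives the equivalence at $s = 1$. For the inductive step I rely on the auxiliary, characteristic-free fact that componentwise linearity is preserved under multiplication by the graded maximal ideal: if $L \subseteq R$ is componentwise linear (abbreviated c.l.), then so is $\mm^j L$ for every $j \geq 0$, as verified from the characterization of c.l.\ via linear resolutions of the strands $L_{\langle d \rangle}$. It ensures that whenever $I^i$ is c.l., each of the pieces $\mm^{s-i} I^i$ arising in the structure theory of $F^s$ is c.l.\ as well, and analogously for $J$.

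For $(\mathrm{ii}) \Rightarrow (\mathrm{i})$, assume $I^i, J^i$ are c.l.\ for $1 \leq i \leq s$. I would apply the Betti splitting $F^s = I^s T + (J+\mm\nn) F^{s-1}$, iteratively refined so as to decompose $(J+\mm\nn) F^{s-1}$ into c.l.\ pieces of the form $\mm^{s-i} I^i$, $\nn^{s-i} J^i$, and powers of $\mm\nn$. The Tor-vanishing that makes this a genuine Betti splitting is generally supplied by Lemma \ref{lem_Torvanishing_fiberprod} and hence requires the characteristic/monomial hypothesis; however, in the present c.l.\ setting it is automatic, because inclusions between c.l.\ ideals generated in strictly increasing degrees are necessarily Tor-vanishing. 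The resulting additive decomposition of Betti tables then forces $\lind_T F^s = 0$.

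For the converse $(\mathrm{i}) \Rightarrow (\mathrm{ii})$, the same Betti splitting (whose validity now follows from c.l.\ of $F^i$ for $i \leq s-1$ together with the inductive hypothesis, which supplies c.l.\ of $I^i, J^i$ at level $s-1$) gives a decomposition $\beta^T_{i,j}(F^s) = \beta^T_{i,j}(I^s T) + \beta^T_{i,j}((J+\mm\nn) F^{s-1}) + \beta^T_{i-1,j}(I^s T \cap (J+\mm\nn)F^{s-1})$ with nonnegative summands, so c.l.\ of $F^s$ forces c.l.\ of $I^s T$ over $T$, which by flat base change along $R \hookrightarrow T$ is equivalent to c.l.\ of $I^s$ over $R$; a symmetric argument handles $J^s$. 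The ``in particular'' statement follows by letting $s \to \infty$. The main obstacle is to establish the Tor-vanishing underpinning the Betti splitting in a characteristic-free manner, which is exactly what componentwise linearity of the pieces buys us in lieu of Lemma \ref{lem_Torvanishing_fiberprod}.
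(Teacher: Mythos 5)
Your proposal takes a genuinely different route, but it overlooks the mechanism the paper already built in, and several of its steps have gaps.

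First, you missed that Theorem~\ref{thm_ld_fiberprod_polynomialrings} already carries a characteristic-free hypothesis (iii), namely ``$I^i$ and $J^i$ are Koszul for all $1\le i\le s$,'' which is precisely statement~(ii) of the corollary. The paper's proof therefore requires no induction and no new decomposition: $(\mathrm{i})\Rightarrow(\mathrm{ii})$ is the unconditional lower bound $\max\{\lind_R I^s,\lind_S J^s\}\le\lind_T F^s$ (applied to each $i\le s$), and $(\mathrm{ii})\Rightarrow(\mathrm{i})$ is the upper bound of Theorem~\ref{thm_ld_fiberprod_polynomialrings} under hypothesis~(iii) together with Lemma~\ref{lem_maxideal_Koszul}, which gives $\lind_R(\mm^{s-i}I^i)=\lind_S(\nn^{s-i}J^i)=0$ for all $i$. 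The Tor-vanishing that underlies hypothesis~(iii) is exactly the phenomenon you wanted: it comes from Lemma~\ref{lem_maxideal_Koszul}, which asks only that the \emph{target} $\mm^{s-t+1}I^{t-1}$ be Koszul and that the source lie inside $\mm$ times the target; componentwise linearity of the source is not needed, and your statement that ``inclusions between c.l.\ ideals generated in strictly increasing degrees are necessarily Tor-vanishing'' is not the right form of the fact and is not established.

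Second, your decomposition $F^s = I^sT + (J+\mm\nn)F^{s-1}$ differs from the paper's $F^s = H^s + \sum_{i=1}^s(\mm\nn)^{s-i}J^i$ (with $H=I+\mm\nn$, refined step by step via the $G_t$'s). You would have to compute the intersection $I^sT\cap(J+\mm\nn)F^{s-1}$ and verify the required Tor-vanishings before calling this a Betti splitting; none of that is done. Third, ``additive decomposition of Betti tables forces $\lind_T F^s=0$'' is not a valid inference: componentwise linearity (equivalently, $\lind=0$ over a Koszul ring) is not determined by the Betti table, and from a Betti splitting alone the Mayer--Vietoris sequence only yields $\lind_T F^s\le 1$. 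The paper instead uses the short exact sequences $0\to G_{t-1}\to G_t\to (\mm\nn)^{s-t}J^t/\mm^{s-t+1}\nn^{s-t}J^t\to 0$ in which the Betti-splitting property makes the maps $\Tor^T_i(k,G_{t-1})\to\Tor^T_i(k,G_t)$ injective; applying Lemma~\ref{lem_exactseq} then yields the needed bound. That extra information about the connecting homomorphisms, not the Betti numbers alone, is what controls the linearity defect.
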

Let $G=(V,E)$ be a finite simple graph with the vertex set $V=\{1,2,\ldots,n\}$ (where $n\ge 1$) and the edge set $E$. The {\it edge ideal} of $G$ is the following monomial ideal in $k[x_1,\ldots,x_n]$: $I(G)=(x_ix_j: \{i,j\}\in E)$. In \cite[Question 7.12]{BBH}, it was asked whether for any non-trivial graph $G$, the function $\reg I(G)^s$ is strictly increasing. As a second application, we can answer this question positively in a special case.
\begin{thm}[Corollary \ref{cor_edgeideals}]
\label{thm_main4}
Let $G=(V,E)$ be a graph. Assume that $V$ is a disjoint union of two non-empty subsets $V_1,V_2$ such that $G$ contains the complete bipartite graph with the bipartition $V_1\cup V_2$ as a subgraph. Then the function $\reg I(G)^s$ is strictly increasing.
\end{thm}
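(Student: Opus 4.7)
The plan is to identify $I(G)$ with a fiber product in the sense of this paper and then deduce the strict monotonicity directly from the regularity formula of Theorem~\ref{thm_main1}. Let $G_i$ be the induced subgraph of $G$ on $V_i$ for $i=1,2$; set $R = k[x_v : v \in V_1]$ and $S = k[x_v : v \in V_2]$ with graded maximal ideals $\mm$ and $\nn$, so that $T = R \otimes_k S$ is the ambient polynomial ring for $I(G)$. Because $\{V_1, V_2\}$ is a disjoint partition of $V$ and $G \supseteq K_{V_1, V_2}$, every edge of $G$ either lies in $G_1$, lies in $G_2$, or is a crossing edge, and all crossing edges are present. Consequently
\[
I(G) = I(G_1) + I(G_2) + \mm\nn,
\]
which is precisely the fiber product of $I := I(G_1) \subseteq R$ and $J := I(G_2) \subseteq S$.

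In the principal case where both $I$ and $J$ are non-zero, they are monomial ideals generated in degree $2$, hence automatically contained in $\mm^2$ and $\nn^2$, and generated in a single degree. All hypotheses of Theorem~\ref{thm_main1} are therefore met, so for every $s \ge 1$,
\[
\reg I(G)^s = \max_{i \in [1,s]} \bigl\{\reg I^i + s - i,\ \reg J^i + s - i\bigr\}.
\]
Strict monotonicity now follows immediately from a term-by-term comparison: each term $\reg I^i + s - i$ (respectively $\reg J^i + s - i$) appearing in the formula for $\reg I(G)^s$ has a counterpart $\reg I^i + (s+1) - i$ (respectively $\reg J^i + (s+1) - i$) in the formula for $\reg I(G)^{s+1}$, which is exactly one larger. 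Taking the maximum over $i \in [1, s]$ already yields $\reg I(G)^{s+1} \ge \reg I(G)^s + 1$, and hence the desired strict increase.

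It remains to dispose of the degenerate cases in which $G_1$ or $G_2$ is edgeless. If both are edgeless, then $G = K_{|V_1|, |V_2|}$ is complete bipartite and $I(G)^s = \mm^s \nn^s$; using the short exact sequence
\[
0 \to T/\mm^s\nn^s \to T/\mm^s \oplus T/\nn^s \to T/(\mm^s + \nn^s) \to 0,
\]
together with the linear resolutions of $\mm^s$ and $\nn^s$ and the Künneth-type identification $T/(\mm^s+\nn^s) = R/\mm^s \otimes_k S/\nn^s$, one computes $\reg I(G)^s = 2s$, which is manifestly strictly increasing. If exactly one of $I(G_1), I(G_2)$ vanishes, I anticipate that a formula of the same shape (perhaps in the more flexible guise of the general Theorem~\ref{thm_reg_fiberprod_polynomialrings}, involving terms $\reg(\mm^{s-i}I^i)$ and $\reg(\nn^{s-i}J^i)$) still admits a term-by-term comparison increasing by $1$ under $s \mapsto s+1$. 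The main obstacle of the proof is precisely this last sub-case: ensuring that the Betti-splitting and Tor-vanishing machinery underpinning Theorem~\ref{thm_main1}, which was stated for non-zero $I$ and $J$, degrades gracefully when one of the two summands vanishes, or else supplying an independent direct argument.
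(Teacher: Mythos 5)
Your reduction of $I(G)$ to the fiber product $I(G_1)+I(G_2)+\mm\nn$ and the term-by-term monotonicity argument are exactly the right ideas and match the spirit of the paper. The difference is in which regularity formula you invoke, and this is where your proof is not quite self-contained: you route through Theorem~\ref{thm_main1} (i.e.\ Corollary~\ref{cor_reg_equigenerated}), whose hypothesis that ``each of $I$ and $J$ is generated by forms of the same degree'' is awkward (at best vacuous) when one of $G_1,G_2$ is edgeless, and you explicitly leave that sub-case as an open obstacle. This is a genuine hole in the write-up, and it does arise --- for instance $V_1=\{1\}$, $V_2=\{2,3\}$ with $E=\{12,13,23\}$ has $I(G_1)=0$, and $G=K_{m,n}$ has both edge ideals zero.

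The paper routes around this cleanly by citing Corollary~\ref{cor_increasingreg_fiberprod}, whose proof uses the \emph{general} regularity formula of Theorem~\ref{thm_reg_fiberprod_polynomialrings},
\[
\reg F^s=\max_{i\in[1,s]}\bigl\{\reg(\mm^{s-i}I^i)+s-i,\ \reg(\nn^{s-i}J^i)+s-i\bigr\},
\]
together with Lemma~\ref{lem_maxideal_reg} to see that each summand $\reg(\mm^{s-i}I^i)+s-i$ increases by at least $1$ when $s$ is replaced by $s+1$. That theorem is stated under Notation~\ref{notn_RandS}, which permits arbitrary homogeneous $I\subseteq\mm^2$, $J\subseteq\nn^2$ (including the zero ideal) and does not ask for equigeneration; so no case analysis on whether $G_1$ or $G_2$ has edges is needed. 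If you want to finish along your lines: in the case $I\ne 0$, $J=0$ one has $F=I+\mm\nn$, and the (still valid) formula $\reg F^s=\max_i\{\reg(\mm^{s-i}I^i)+s-i\}$ admits the same strict $+1$ comparison via Lemma~\ref{lem_maxideal_reg}; so your anticipated fix is correct, it just needs to be carried out, and the cleanest way to do so is to quote Corollary~\ref{cor_increasingreg_fiberprod} rather than Theorem~\ref{thm_main1}.

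One minor remark: in the case $I=J=0$ you reprove $\reg(\mm^s\nn^s)=2s$ with a mapping cone; this follows at once from Lemma~\ref{lem_tensor}, which is also what the paper would use.
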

Note that the regularity of powers of a monomial ideal generated in a single degree need not be weakly increasing (see Remark \ref{rem_rstab}).

The paper is organized as follows. Section \ref{sect_background} is devoted to some prerequisites. In Section \ref{sect_fibprod}, we determine the regularity and linearity defect of fiber products. We also give a short derivation of the depth of fiber products. The regularity, depth, and linearity defect of higher powers of a fiber product are studied in Sections \ref{sect_regularity}, \ref{sect_depth}, and \ref{sect_lind}, respectively. The common method rests on a result on the Betti splitting of $F^s$ in Section \ref{sect_Bettisplit}. An application to the regularity of powers of edge ideals is presented in Section \ref{sect_edgeideals}. We prove the correctness of the example in Remark \ref{rem_counterexample_reg} mentioned above in Appendix \ref{appendix}. 

\section{Background}
\label{sect_background}

We will employ standard terminology and facts of commutative algebra, as in the books of Bruns and Herzog \cite{BH} and Eisenbud \cite{Eis}. For standard knowledge of homological commutative algebra, we refer to the monographs of Avramov \cite{Avr} and Peeva \cite{P}.

In the following, we let $(R,\mm)$ be a noetherian local ring with the unique maximal ideal $\mm$ and the residue field $k=R/\mm$, or a standard graded algebra over a field $k$, with the graded maximal ideal $\mm$. In the second situation, it is understood that $R$ is an $\N$-graded ring with $R_0=k$, and $R$ is generated as a $k$-algebra by finitely many elements of degree $1$.

\subsection{Linearity defect}
We recall the notion of linearity defect, introduced by Herzog and Iyengar \cite{HIy}, who took motivation from work of Eisenbud, Fl\o ystad and Schreyer \cite{EFS}.

Let $(R,\mm)$ be a noetherian local ring with the residue field $k=R/\mm$, and $M$ a finitely generated $R$-module. Let $X$ be the minimal free resolution of $M$:
\[
X: \cdots \xlongrightarrow{\partial} X_i\xlongrightarrow{\partial} X_{i-1} \xlongrightarrow{\partial} \cdots \xlongrightarrow{\partial} X_1 \xlongrightarrow{\partial} X_0 \longrightarrow 0.
\]
The {\it projective dimension} of $M$ over $R$ is
\[
\pdim_R M=\sup\{i: X_i\neq 0\} =\sup\{i: \Tor^R_i(k,M)\neq 0\}.
\]
The minimality of $X$ induces for each $i\ge 0$ a subcomplex
\[
\Fcc^iX: \cdots \to X_{i+1} \to X_i \to \mm X_{i-1} \to \cdots \to \mm^{i-1}X_1 \to \mm^iX_0\to 0.
\]
Associate with the filtration
\[
X=\Fcc^0X\supseteq \Fcc^1X \supseteq \Fcc^2X \supseteq \cdots 
\]
the complex 
\[
\linp^R X= \bigoplus_{i=0}^\infty \frac{\Fcc^iX}{\Fcc^{i+1}X}.
\]
We call $\linp^R X$ the {\it linear part} of $X$. Let
\[
\gr_\mm(M)=\bigoplus_{i\ge 0} \frac{\mm^iM}{\mm^{i+1}M}
\]
be the associated graded module of $M$ with respect to the $\mm$-adic filtration. The linear part $\linp^R X$ is a complex of $\gr_\mm(R)$-modules, with 
$$
(\linp^R X)_i=(\gr_\mm(X_i))(-i) \quad \text{for all $i$}.
$$ 
The {\it linearity defect} of $M$ as an $R$-module is
\[
\lind_R M=\sup\{i: H_i(\linp^R X)\neq 0\}.
\]
By convention, $\lind_R (0)=0$. In the graded situation, we define the linear part of a minimal graded free resolution, and the linearity defect of a graded module similarly. The linearity defect can be characterized by the vanishing of certain maps of Tor \cite[Theorem 2.2]{Se}. This characterization gives rise to the following "depth lemma" for the linearity defect.
\begin{lem}[Nguyen, {\cite[Proposition 2.5]{Ng}}]
\label{lem_exactseq}
Let $(R,\mm)$ be a noetherian local ring with the residue field $k$. Consider an exact sequence $0\longrightarrow M \longrightarrow P \longrightarrow N\longrightarrow 0$ of finitely generated $R$-modules. Denote 
\begin{align*}
d_P&=\inf\{m\ge 0: \Tor^R_i(k,M) \longrightarrow \Tor^R_i(k,P) ~\text{is the zero map for all $i\ge m$}\},\\
d_N &=\inf\{m\ge 0: \Tor^R_i(k,P) \longrightarrow \Tor^R_i(k,N) ~\text{is the zero map for all $i\ge m$}\},\\
d_M&=\inf\{m\ge 0: \Tor^R_{i+1}(k,N) \longrightarrow \Tor^R_i(k,M) ~\text{is the zero map for all $i\ge m$}\}.
\end{align*}
Then there are inequalities
\begin{align*}
\lind_R P &\le \max\{\lind_R M, \lind_R N, \min\{d_M, d_N\}\},\\
\lind_R N &\le \max\{\lind_R M+1, \lind_R P, \min\{d_M+1,d_P\}\},\\
\lind_R M &\le \max\{\lind_R P, \lind_R N-1, \min\{d_P,d_N-1\}\}.
\end{align*}
\end{lem}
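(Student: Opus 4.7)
The plan is to deduce all three inequalities from Şega's criterion [Se, Theorem 2.2], which reformulates $\lind_R Q\le d$ as the vanishing, for every $i\ge d$, of a canonical family of induced maps on $\Tor^R_i(-,Q)$ coming from the $\mm$-adic filtration. Granting this translation, the lemma becomes a diagram chase on the long exact sequence of $\Tor$ attached to $0\to M\to P\to N\to 0$.

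First I would apply the relevant Tor functors (with auxiliary arguments $k$ and $R/\mm^n$) to the short exact sequence, obtaining for each $n$ a commutative ladder
\[
\cdots\to\Tor^R_{i+1}(T,N)\to\Tor^R_i(T,M)\to\Tor^R_i(T,P)\to\Tor^R_i(T,N)\to\cdots
\]
in which the vertical Şega-maps between the $R/\mm^{n+1}$-row and the $R/\mm^n$-row detect linearity defect, and the horizontal maps are controlled by the hypotheses on $d_M, d_N, d_P$. Explicitly, for $i\ge d_P$ the map $\Tor^R_i(k,M)\to\Tor^R_i(k,P)$ vanishes, for $i\ge d_N$ the map $\Tor^R_i(k,P)\to\Tor^R_i(k,N)$ vanishes, and for $i\ge d_M$ the connecting map $\Tor^R_{i+1}(k,N)\to\Tor^R_i(k,M)$ vanishes; by naturality, the analogous vanishings propagate to the $R/\mm^n$-level.

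For the first inequality, set $d=\max\{\lind_R M,\lind_R N,\min\{d_M,d_N\}\}$ and fix $i\ge d$. If $d\ge d_M$, the connecting map into $\Tor^R_i(-,M)$ vanishes and the long exact sequence splits into short exact sequences $0\to\Tor^R_i(-,M)\to\Tor^R_i(-,P)\to\Tor^R_i(-,N)\to 0$; if instead $d\ge d_N$, the outgoing map from $\Tor^R_i(-,P)$ vanishes and the sequence splits in the dual way. In either case, $i\ge\max\{\lind_R M,\lind_R N\}$ makes the vertical Şega-maps at $M$ and $N$ zero, and the five lemma forces the vertical Şega-map at $P$ to vanish, yielding $\lind_R P\le d$ by Şega's criterion. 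The second and third inequalities follow by running the identical chase with $M, P, N$ cyclically permuted along the long exact sequence; the $+1$ in $\lind_R M+1$ and the $-1$ in $d_N-1$ arise because the connecting homomorphism places $\Tor^R_{i+1}(-,N)$ adjacent to $\Tor^R_i(-,M)$, effectively shifting the homological index at $M$ by one relative to $N$.

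The main obstacle is the indexing: one must correctly match the two alternative routes by which the target vertical arrow can be killed (this is what the $\min$ in each bound records) and handle the $\pm 1$ shifts forced by the asymmetric positions of $M$ and $N$ in the long exact sequence of Tor. Once Şega's criterion is in place and the ladder is drawn, the chase itself is a routine application of the five lemma.
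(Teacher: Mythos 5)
Your high-level plan — invoke Şega's characterization of linearity defect via the maps $\Tor^R_i(R/\mm^{q+1},-)\to\Tor^R_i(R/\mm^{q},-)$ and run a diagram chase on the ladder of long exact Tor-sequences — is in the right spirit, and the paper itself points to [Se, Theorem~2.2] as the crucial input. (The paper cites the lemma from [Ng, Proposition~2.5] without reproducing its proof.) However, your argument has a genuine gap at the sentence ``by naturality, the analogous vanishings propagate to the $R/\mm^n$-level.'' The hypotheses on $d_M$, $d_N$, $d_P$ say that certain maps $\Tor^R_i(k,\phi)$ vanish, and this does \emph{not} imply the vanishing of $\Tor^R_i(R/\mm^q,\phi)$. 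For a concrete counterexample, take $x\in\mm\setminus\mm^2$ and $\phi\colon k\to R/\mm^2$, $1\mapsto \bar{x}$: then $\Tor^R_0(k,\phi)\colon k\to k$ is the zero map because $\bar{x}\in\mm/\mm^2$, yet $\Tor^R_0(R/\mm^2,\phi)\colon k\to R/\mm^2$ is the injection $1\mapsto\bar{x}\neq 0$. So the horizontal arrows in your ladder at the $R/\mm^q$-level need not vanish, and the five-lemma/diagram chase you describe cannot be started.

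What is really encoded by the $d_*$ conditions is not a vanishing at every truncation $R/\mm^q$ but a \emph{filtration-degree shift} at the level of minimal free resolutions: $\Tor^R_i(k,\phi)=0$ for $i\ge m$ means precisely that a lift of $\phi$ to minimal free resolutions $\tilde\phi\colon F\to G$ satisfies $\tilde\phi_i(F_i)\subseteq \mm G_i$ for $i\ge m$, hence the induced map on linear parts $\linp^R F\to\linp^R G$ is zero in homological degree $\ge m$. The correct derivation therefore works with the (horseshoe-type) resolution of the short exact sequence and tracks how the standard filtrations $\Fcc^\bullet$ interact, producing the long exact sequence on $H_\bullet(\linp^R(-))$ together with the extra degree shifts that account for the $\pm 1$ in the bounds and for the $\min\{d_M,d_N\}$, $\min\{d_M+1,d_P\}$, $\min\{d_P,d_N-1\}$ terms. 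Your instinct about where the $\pm 1$ shifts come from is right, but to make the argument go through you must replace the false ``propagation'' step with the filtration-shift interpretation above and run the chase on linear parts (or, equivalently, carry out Şega's criterion with the correct bookkeeping of how $\Tor^R_i(k,\phi)=0$ raises the filtration degree of $\tilde\phi$), not on the Tor groups $\Tor^R_i(R/\mm^q,-)$ directly.
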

For more information on the linearity defect and related problems, see, for example, \cite{CINR, HIy, IyR, Se}.

\subsection{Koszul modules}
\label{subsect_Koszul}
Let $(R,\mm)$ be a graded $k$-algebra, $M$  a finitely generated graded $R$-module. The {\it regularity of $M$ over $R$} is the following number
\[
\reg_R M=\sup\{j-i:\Tor^R_i(k,M)_j\neq 0\}.
\] 
If $R$ is a polynomial ring, $\reg_R M$ is the same as the Castelnuovo-Mumford regularity $\reg M$ of $M$. However, they differ in general.

We call $R$ a {\it Koszul algebra} if $\reg_R (R/\mm)=0$. We say that $M$ is a {\it Koszul module} if $\lind_R M=0$. (In previous work of \c{S}ega \cite{Se1}, Koszul modules are called modules with linear resolution.) 

In this paper, we say that $M$ has a {\it linear resolution} if there exists an integer $d$ such that $M$ is generated in degree $d$, and $\reg_R M=d$. In that case, we say $M$ has a $d$-linear resolution. Following Herzog and Hibi \cite{HH}, we call $M$ {\it componentwise linear} if for each $d$, the submodule $(M_d)$ has a $d$-linear resolution. If $R$ is a Koszul algebra, then $M$ is a Koszul module if and only if it is componentwise linear (see \cite[Theorem 3.2.8]{Ro} and \cite[Proposition 4.9]{Yan}). 

\subsection{Tor-vanishing morphisms and Betti splittings}
\label{sect_background_Bettisplit}
Let $(R,\mm)$ be a noetherian local ring with the residue field $k$. Let $\phi: M\to P$ be a morphism of finitely generated $R$-modules. We say that $\phi$ is {\it Tor-vanishing} if for all $i\ge 0$, it holds that $\Tor^R_i(k,\phi)=0$.

Let $P,I,J\neq (0)$ be proper ideals of $R$ such that $P=I+J$. Following \cite{FHV}, the decomposition of $P$ as $I+J$ is called a {\it Betti splitting} if for all $i\ge 0$, the following equality of Betti numbers holds: $\beta_i(P)=\beta_i(I)+\beta_i(J)+\beta_{i-1}(I\cap J)$. Here, as usual, $\beta_i(M)=\dim_k \Tor^R_i(k,M)$.

Betti splittings are closely related to the Tor-vanishing of certain maps.
\begin{lem}[Francisco, H\`a, and Van Tuyl {\cite[Proposition 2.1]{FHV}}]
\label{lem_criterion_Bettisplit}
The following are equivalent:
\begin{enumerate}[\quad \rm(i)]
\item The decomposition $P=I+J$ is a Betti splitting;
\item The morphisms $I\cap J \to I$ and $I\cap J \to J$ are Tor-vanishing;
\item The mapping cone construction for the map $I\cap J \to I\oplus J$ yields a minimal free resolution of $P$.
\end{enumerate}
In particular, if $P=I+J$ is a Betti splitting, then for all $i$, the natural maps $\Tor^R_i(k,I)\to \Tor^R_i(k,P)$ and $\Tor^R_i(k,J)\to \Tor^R_i(k,P)$ are injective.
\end{lem}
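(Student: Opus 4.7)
The backbone of the proof is the Mayer-Vietoris-style short exact sequence
\[
0\longrightarrow I\cap J \xlongrightarrow{\alpha} I\oplus J \xlongrightarrow{\sigma} P \longrightarrow 0,
\]
where $\alpha(x)=(x,-x)$ and $\sigma(a,b)=a+b$. Applying $-\otimes_R k$ yields a long exact sequence of Tor, and all three conditions will be read off from it. The plan is to show (ii)$\Leftrightarrow$(iii) directly from the definition of mapping cones and the minimality criterion for Tor-vanishing maps, and then (i)$\Leftrightarrow$(ii) by a dimension-counting argument in the long exact sequence.

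For (ii)$\Rightarrow$(iii), I would use the standard fact that a morphism $\phi\colon M\to N$ is Tor-vanishing if and only if, on minimal free resolutions $F_\bullet\to M$ and $G_\bullet\to N$, any lift $\widetilde\phi\colon F_\bullet\to G_\bullet$ has all entries in $\mm$. If both $I\cap J\to I$ and $I\cap J\to J$ are Tor-vanishing, then so is $\alpha\colon I\cap J\to I\oplus J$, hence a lift $\widetilde\alpha\colon F_\bullet\to G_\bullet$ has entries in $\mm$. The mapping cone of $\widetilde\alpha$ is then a free resolution of $P$ whose differentials all have entries in $\mm$, i.e.\ a minimal free resolution; this gives (iii). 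Conversely, (iii) forces the lift of $\alpha$ to have entries in $\mm$, so $\alpha$ and hence each of its components is Tor-vanishing, giving (ii). From (iii) the Betti splitting identity (i) is immediate, since for a mapping cone resolution $\beta_i(P)=\beta_i(I\oplus J)+\beta_{i-1}(I\cap J)=\beta_i(I)+\beta_i(J)+\beta_{i-1}(I\cap J)$.

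The main remaining step, and the one I expect to require the most care, is (i)$\Rightarrow$(ii). Write $\alpha_i=\Tor^R_i(k,\alpha)$ for the maps $\Tor^R_i(k,I\cap J)\to \Tor^R_i(k,I\oplus J)$. From the exact sequence
\[
\Tor^R_i(k,I\cap J)\xlongrightarrow{\alpha_i} \Tor^R_i(k,I\oplus J)\longrightarrow \Tor^R_i(k,P)\longrightarrow \Tor^R_{i-1}(k,I\cap J)\xlongrightarrow{\alpha_{i-1}} \Tor^R_{i-1}(k,I\oplus J),
\]
a dimension count over $k$ gives
\[
\beta_i(P)=\bigl(\beta_i(I\oplus J)-\dim\mathrm{im}\,\alpha_i\bigr)+\bigl(\beta_{i-1}(I\cap J)-\dim\mathrm{im}\,\alpha_{i-1}\bigr).
\]
The Betti splitting hypothesis $\beta_i(P)=\beta_i(I\oplus J)+\beta_{i-1}(I\cap J)$ for all $i\ge 0$ then forces $\dim\mathrm{im}\,\alpha_i=0$ for every $i$, i.e.\ $\alpha_i=0$. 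Since $\alpha_i=(\Tor^R_i(k,\iota_I),-\Tor^R_i(k,\iota_J))$, each component vanishes, proving (ii).

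Finally, the "in particular" assertion drops out of the vanishing of all $\alpha_i$: the long exact sequence degenerates into the short exact sequences
\[
0\longrightarrow \Tor^R_i(k,I\oplus J)\longrightarrow \Tor^R_i(k,P)\longrightarrow \Tor^R_{i-1}(k,I\cap J)\longrightarrow 0,
\]
so the maps $\Tor^R_i(k,I)\to \Tor^R_i(k,P)$ and $\Tor^R_i(k,J)\to \Tor^R_i(k,P)$ are injective, being summands of an injection.
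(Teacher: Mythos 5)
The paper does not give its own proof of this lemma; it is cited directly from Francisco, H\`a, and Van Tuyl \cite[Proposition 2.1]{FHV}, so there is nothing in the paper to compare your argument against. Your proof is correct and is essentially the standard one found in the literature. The whole argument rides on the short exact sequence $0\to I\cap J\xrightarrow{\alpha} I\oplus J\to P\to 0$: you establish (ii)$\Leftrightarrow$(iii) via the characterization of Tor-vanishing maps as those whose lifts between minimal free resolutions have all matrix entries in $\mm$ (which, since $\alpha$ is injective with cokernel $P$, is precisely when the mapping cone of the lift is a minimal resolution of $P$); (iii)$\Rightarrow$(i) by adding up ranks in the minimal mapping cone; and (i)$\Rightarrow$(ii) by the dimension count in the long exact Tor sequence, which forces $\dim\mathrm{im}\,\alpha_i+\dim\mathrm{im}\,\alpha_{i-1}=0$ for every $i$, hence $\alpha_i=0$, hence each of its two components vanishes. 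The ``in particular'' clause then follows because the long exact sequence degenerates into short exact sequences $0\to\Tor^R_i(k,I\oplus J)\to\Tor^R_i(k,P)\to\Tor^R_{i-1}(k,I\cap J)\to 0$, and the natural maps from $\Tor^R_i(k,I)$ and $\Tor^R_i(k,J)$ into $\Tor^R_i(k,P)$ factor through the split injections into $\Tor^R_i(k,I\oplus J)$. All the steps, including the boundary case $i=0$ (with the convention $\beta_{-1}=0$ and $\alpha_{-1}=0$), check out.
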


Let $R=k[x_1,\ldots,x_n]$ be a standard graded polynomial ring, $\mm=R_+$ and $I\subseteq \mm$ a homogeneous ideal of $R$. Denote by $\partial(I)$ the ideal generated by $\partial f/\partial x_i$, where $f$ runs through the minimal homogeneous generators of $I$ and $1\le i\le n$. The following differential criteria for detecting Tor-vanishing homomorphisms will be useful.
\begin{lem}[Ahangari Maleki {\cite[Theorem 2.5 and its proof]{A}}]
\label{lem_differential_criterion}
Assume that $k$ has characteristic zero. Let $I_1$ and $I_2$ be homogeneous ideals of $R$ such that $\partial(I_1)\subseteq I_2$. Then $I_1\subseteq \mm I_2$ and the map $I_1\to I_2$ is Tor-vanishing.
\end{lem}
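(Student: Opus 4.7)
The plan is to handle the two assertions separately, both leveraging the Euler identity $\sum_{i=1}^n x_i\partial_i(f) = (\deg f)\cdot f$ for homogeneous $f$, with characteristic zero used precisely to invert positive integers. For $I_1 \subseteq \mm I_2$, I would note that the hypothesis is nontrivial only when $I_1$ is a proper homogeneous ideal, so each minimal generator $f$ of $I_1$ has $\deg f \ge 1$; Euler then rewrites $f = (\deg f)^{-1}\sum_i x_i\partial_i(f) \in \mm\cdot\partial(I_1) \subseteq \mm I_2$. In particular $I_1 \subseteq I_2$, and a short Leibniz-rule check (write $a=\sum r_k f_k$ with the $f_k$ minimal generators) shows that $\partial_j(a)\in I_2$ for every homogeneous $a\in I_1$.

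For the Tor-vanishing of the inclusion $\phi: I_1\hookrightarrow I_2$, I would use the Koszul complex $K=K(x_1,\ldots,x_n;R)$ as the minimal free resolution of $k$, so that $\Tor^R_i(k,\phi)$ is the map on $H_i$ induced by $1\otimes\phi\colon K\otimes_R I_1 \to K\otimes_R I_2$. I would then null-homotope $1\otimes\phi$ by the operator
\[
h(a\otimes\omega) = \frac{1}{\deg a + p}\sum_{j=1}^n \partial_j(a)\otimes(e_j\wedge\omega),
\]
defined on bi-homogeneous elements $a\otimes\omega$ with $a\in I_1$ of internal degree $\deg a\ge 1$ and $\omega$ of Koszul degree $p$. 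This is well-defined: $\partial_j(a)\in I_2$ by the previous step, and $\deg a + p \ge 1$ is invertible precisely because $\chara k = 0$.

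A direct computation of $dh+hd$ is then the crux. Expanding $h\circ d$ via $\partial_j(x_{i_k}a) = \delta_{j,i_k}a + x_{i_k}\partial_j(a)$, and moving $e_{i_k}$ back to position $k$ (cost $(-1)^{k-1}$), produces $p\cdot a\otimes\omega$ together with mixed terms of shape $\pm x_{i_k}\partial_j(a)\otimes(e_j\wedge\widehat\omega_k)$; expanding $d\circ h$ via $d(e_j\wedge\omega) = x_j\omega - e_j\wedge d\omega$ yields $\sum_j x_j\partial_j(a)\otimes\omega = (\deg a)\cdot a\otimes\omega$ by Euler, plus the negatives of the same mixed terms. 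After cancellation one is left with $(\deg a + p)\, a\otimes\omega$, and dividing by the normalization gives $(dh+hd)(a\otimes\omega) = (1\otimes\phi)(a\otimes\omega)$. The main obstacle is the sign and cancellation bookkeeping in this chain-homotopy identity; once the normalization $1/(\deg a+p)$ is correctly anchored to the bi-grading, the verification is formal, and the resulting null-homotopy forces $\Tor^R_i(k,\phi)=0$ for all $i$.
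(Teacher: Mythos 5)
Your proof is correct and reproduces the mechanism of Ahangari Maleki's cited argument: the Euler identity to get $I_1\subseteq\mm I_2$, and an explicit chain null-homotopy $h=\frac{1}{\deg a+p}\sum_j\partial_j(\,\cdot\,)\otimes(e_j\wedge -)$ on the Koszul resolution of $k$ to kill $\Tor^R_i(k,\phi)$, with $\chara k=0$ entering only to invert the total degree $\deg a+p\ge 1$. The bookkeeping checks out: the $\delta_{j,i_k}$ terms in $hd$ contribute $\frac{p}{\deg a+p}\,a\otimes\omega$, the $\sum_j x_j\partial_j(a)$ terms in $dh$ give $\frac{\deg a}{\deg a+p}\,a\otimes\omega$ via Euler, and the cross terms $x_{i_k}\partial_j(a)\otimes(e_j\wedge\hat\omega_k)$ cancel — and you correctly anticipated that the Leibniz extension $\partial_j(a)\in I_2$ for all homogeneous $a\in I_1$ must be established (using $I_1\subseteq I_2$) before $h$ is well defined.
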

If $f$ is a monomial of $R$, then the {\it support of $f$}, denoted $\supp f$, refers to the set of variables dividing $f$. If $I$ is a monomial ideal, the {\it support of $I$}, denoted $\supp I$, is the union of the supports of its minimal monomial generators. For a monomial ideal $I$, we denote by $\partial^*(I)$ the ideal generated by the monomials $f/x_i$, where $f\in I$ and $x_i\in \supp f$. By Lemma 4.2 and Proposition 4.4 in \cite{NgV}, we have
\begin{lem}[Nguyen--Vu]
\label{lem_differential_criterion_monomial}
Let $I_1$ and $I_2$ be monomial ideals of $R$ such that $\partial^*(I_1)\subseteq I_2$. Then $I_1\subseteq \mm I_2$ and the map $I_1\to I_2$ is Tor-vanishing.
\end{lem}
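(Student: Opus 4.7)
The plan is to split the statement into its two assertions. The first, $I_1 \subseteq \mm I_2$, is elementary: for any monomial $f \in I_1$ and any $x_i \in \supp f$, write $f = x_i \cdot (f/x_i)$; since $f/x_i \in \partial^*(I_1) \subseteq I_2$, this puts $f \in \mm I_2$. The Tor-vanishing of $I_1 \to I_2$ is the substantive content, and I would derive it from the standard multigraded identification
\[
\Tor_i^R(k, I)_\alpha \;\cong\; \tilde H_{i-1}(K^\alpha(I); k) \qquad (\alpha \in \N^n),
\]
where $K^\alpha(I)$ is the upper Koszul simplicial complex with face set $\{\sigma \subseteq \supp \alpha : x^\alpha/x^\sigma \in I\}$. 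This identification is natural in $I$: a containment $I_1 \subseteq I_2$ of monomial ideals induces an inclusion of simplicial complexes $K^\alpha(I_1) \hookrightarrow K^\alpha(I_2)$, and the Tor map in multidegree $\alpha$ is the corresponding map on reduced simplicial homology.

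It therefore suffices to show, for each $\alpha \in \N^n$, that the inclusion $K^\alpha(I_1) \hookrightarrow K^\alpha(I_2)$ induces the zero map on reduced homology. The case $\alpha = 0$ is vacuous, since the implicit assumption $I_1 \subseteq \mm$ forces $K^0(I_1) = \emptyset$. For $\alpha \ne 0$, I would pick any $j_0 \in \supp \alpha$ and consider the closed star
\[
S_{j_0} = \{\tau \in K^\alpha(I_2) : \tau \cup \{j_0\} \in K^\alpha(I_2)\},
\]
which is contractible (being a simplicial cone with apex $j_0$ whenever $j_0$ is a vertex of $K^\alpha(I_2)$). The hypothesis $\partial^*(I_1) \subseteq I_2$ enters at exactly one point: for any $\sigma \in K^\alpha(I_1)$ with $j_0 \notin \sigma$, the variable $x_{j_0}$ divides $x^\alpha/x^\sigma \in I_1$, hence $x^\alpha/(x^\sigma x_{j_0}) \in \partial^*(I_1) \subseteq I_2$, i.e., $\sigma \cup \{j_0\} \in K^\alpha(I_2)$; the case $j_0 \in \sigma$ is tautological. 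Thus $K^\alpha(I_1) \subseteq S_{j_0}$, the inclusion factors through a contractible subcomplex, and the induced map on reduced homology vanishes.

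The main technical point to pin down in a careful write-up is the naturality of the Hochster-type identification with the correct sign and degree conventions; once that is set up, the contractible-star argument applies uniformly in every multidegree. A chain-level alternative via an explicit homotopy on the multigraded Koszul complex sending $x^{\alpha - \chi_\sigma} \otimes e_\sigma$ to a signed sum of $x^{\alpha - \chi_{\sigma \cup \{j\}}} \otimes e_{\sigma \cup \{j\}}$ over $j \in \supp\alpha \setminus \sigma$ looks attractive, but it seems to demand an Euler-type identity not available in positive characteristic, which is precisely the reason for preferring the simplicial route here and for expecting the result to genuinely need the monomial hypothesis.
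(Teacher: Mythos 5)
Your proof is correct. The paper does not actually prove this lemma; it is quoted from Lemma~4.2 and Proposition~4.4 of \cite{NgV}, so there is no in-text argument to compare against, and your derivation via Hochster's multigraded identification $\Tor_i^R(k,I)_\alpha\cong\tilde H_{i-1}(K^\alpha(I);k)$ together with the closed-star observation is a clean, self-contained route. The geometric content is exactly right: the hypothesis $\partial^*(I_1)\subseteq I_2$ says precisely that adjoining any fixed $j_0\in\supp\alpha$ to a face of $K^\alpha(I_1)$ yields a face of $K^\alpha(I_2)$, so $K^\alpha(I_1)$ lands inside a cone, and coning onto a \emph{single} vertex is what sidesteps the Euler relation $\deg f\cdot f=\sum_i x_i\,\partial f/\partial x_i$ on which the characteristic-zero criterion (Lemma~\ref{lem_differential_criterion}) depends. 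A few details worth pinning down in a polished write-up: (a) the hypotheses force $I_1\subsetneq R$ (otherwise $\partial^*(I_1)\subseteq I_2$ and $I_1\subseteq\mm I_2$ are already incompatible), which justifies treating $K^0(I_1)$ as the void complex; (b) to see that $S_{j_0}$ is an honest nonempty cone whenever $K^\alpha(I_1)$ is nonvoid, observe that $\{j_0\}\in K^\alpha(I_2)$ because $x^\alpha/x_{j_0}\in\partial^*(I_1)\subseteq I_2$, so both $\emptyset$ and $\{j_0\}$ lie in $S_{j_0}$; (c) the ``tautological'' case $j_0\in\sigma$ is better handled by applying the other case to the face $\sigma\setminus\{j_0\}$, since one still needs $\sigma\in K^\alpha(I_2)$ and not merely $\sigma\cup\{j_0\}\in K^\alpha(I_2)$; and (d) the naturality of the Hochster-type isomorphism with respect to inclusions $I_1\subseteq I_2$ should be recorded, e.g.\ by identifying $(K_\bullet\otimes R/I)_\alpha$ with the relative reduced chain complex of the pair consisting of the full simplex on $\supp\alpha$ and $K^\alpha(I)$, where both the isomorphism and its functoriality are manifest.
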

Another source of Tor-vanishing maps is given by
\begin{lem}
\label{lem_maxideal_Koszul}
Let $(R,\mm)$ be a Koszul local ring and $M$ a finitely generated $R$-module which is Koszul. Then the map $\mm M\to M$ is Tor-vanishing and $\mm M$ is also Koszul.
\end{lem}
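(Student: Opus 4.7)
The plan is to first establish that $\mm M$ is Koszul, and then to derive Tor-vanishing of the inclusion $\mm M \hookrightarrow M$ as an automatic consequence of a degree comparison on Tor. My first reduction is to pass to the graded setting via the associated graded construction: for a Koszul local ring $R$, the linearity defect of $M$ is controlled by that of $\gr_\mm M$ over the Koszul graded algebra $\gr_\mm R$. Using then that Koszul modules over Koszul algebras are componentwise linear, I further reduce to the case that $M$ is equigenerated in a single degree $d$, dealing with the multi-degree case via a filtration by components.

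In this reduced setting, I would analyze the short exact sequence
\[
0 \to \mm M \to M \to M/\mm M \to 0.
\]
Since $M$ is Koszul and equigenerated in degree $d$, it has $d$-linear resolution, so $\reg M = d$; similarly, $M/\mm M$ is a direct sum of copies of $k(-d)$, giving $\reg(M/\mm M) = d$. The standard regularity inequality for short exact sequences yields
\[
\reg(\mm M) \le \max\{\reg M,\reg(M/\mm M)+1\} = d+1,
\]
while the fact that $\mm M$ is equigenerated in degree $d+1$ forces $\reg(\mm M) \ge d+1$. The resulting equality $\reg(\mm M) = d+1$, combined with equi-generation in degree $d+1$, forces $\mm M$ to have $(d+1)$-linear resolution; that is, $\mm M$ is Koszul. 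For Tor-vanishing, one then observes that $\Tor_i^R(k, M)$ is concentrated in internal degree $d+i$ while $\Tor_i^R(k, \mm M)$ sits in internal degree $d+1+i$. Since the induced map between these graded groups must preserve internal degree and the source and target live in disjoint internal degrees, the map is identically zero, establishing the required Tor-vanishing.

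The main obstacle I foresee is the initial reduction from the local, possibly non-equigenerated setting to the equigenerated graded one; this uses standard but non-trivial facts about associated graded modules and componentwise linear modules. Once in the reduced setting, the argument is strikingly clean: a single application of the regularity bound for short exact sequences, followed by a degree-counting observation, delivers both conclusions of the lemma in succession.
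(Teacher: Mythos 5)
The equigenerated graded case of your argument is correct and clean, but the two reductions you invoke in the first paragraph are precisely where the content lies, and they are not routine in the way you suggest. The crucial issue is that your "degree-counting observation'' — namely that $\Tor_i^R(k,M)$ and $\Tor_i^R(k,\mm M)$ live in disjoint internal degrees — is \emph{only} true when $M$ is generated in a single degree. The moment $M$ has minimal generators in two consecutive degrees $a$ and $a+1$ (which is allowed for a componentwise linear module), $\Tor_i^R(k,M)$ can be nonzero in degrees $a+i$ and $a+1+i$, while $\Tor_i^R(k,\mm M)$ can be nonzero in degrees $a+1+i$ and $a+2+i$; these overlap in degree $a+1+i$, so no purely degree-theoretic reason forces the map to vanish there. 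Thus the reduction "via a filtration by components'' cannot be a bookkeeping step: one must show that the canonical degree filtration of a componentwise linear module induces short exact sequences on $\Tor$ (so that the outer vanishing can be propagated to the middle term), and then still combine this with a degree argument term by term. This is doable, but it is a genuine argument you have not supplied — and as written, the final paragraph gives the impression that disjointness of degrees suffices, which is false once $M$ is not equigenerated.

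The passage from the local hypothesis to the graded one also deserves more than a sentence: $\gr_\mm(\mm M)$ and $(\gr_\mm R)_+\cdot\gr_\mm(M)$ sit in shifted degrees, the map $\gr_\mm(\mm M)\to\gr_\mm(M)$ induced by the inclusion shifts degree, and one must explain how Tor-vanishing over $\gr_\mm R$ lifts back to Tor-vanishing over $R$. For comparison, the paper does not carry out an argument at all here — it simply cites the proof of Corollary~3.8 of \cite{Ng}, which works directly in the local setting with the linearity-defect machinery (the maps of $\Tor$ used in Lemma~\ref{lem_exactseq}). So your route is genuinely different from the paper's, and could in principle be made to work, but the two steps just singled out are the ones that need to be written out; the equigenerated case you did in detail is the easy part.
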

\begin{proof}
This follows from the proof of \cite[Corollary 3.8]{Ng}.
\end{proof}

\subsection{Elementary facts}

\begin{lem}
\label{lem_intersect}
Let $R,S$ be affine $k$-algebras. Let $I, J$ be ideals of $R,S$, respectively. Then in $T=R\otimes_k S$, there is an equality
$I\cap J=IJ$.
\end{lem}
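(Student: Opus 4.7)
The plan is to unpack the abuse of notation: by $I$ and $J$ we really mean their extensions $IT = I\otimes_k S$ and $JT = R\otimes_k J$ inside $T = R\otimes_k S$, and by $IJ$ we mean the product of these two extended ideals, which is generated by elements $(a\otimes 1)(1\otimes b) = a\otimes b$ for $a\in I$, $b\in J$, i.e.\ the image in $T$ of $I\otimes_k J$. Since $k$ is a field, every $k$-module is flat, so all of these natural maps $I\otimes_k J \hookrightarrow I\otimes_k S \hookrightarrow R\otimes_k S$ and $I\otimes_k J \hookrightarrow R\otimes_k J \hookrightarrow R\otimes_k S$ are injective, and the identifications above are unambiguous.

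The inclusion $IJ \subseteq I\cap J$ is immediate from the fact that $IJ$ is contained in both $I$ and $J$ (as ideals in $T$). For the reverse inclusion, I would consider the canonical surjection
\[
\pi\colon T = R\otimes_k S \longrightarrow R\otimes_k (S/J),
\]
whose kernel is precisely $JT = R\otimes_k J$. Restricting $\pi$ to $IT = I\otimes_k S$, the composition $I\otimes_k S \to R\otimes_k S \to R\otimes_k (S/J)$ factors as
\[
I\otimes_k S \twoheadrightarrow I\otimes_k (S/J) \hookrightarrow R\otimes_k (S/J),
\]
where the first map is the surjection induced by $S \twoheadrightarrow S/J$ (with kernel $I\otimes_k J$), and the second map is injective by flatness of $I$ over $k$. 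Hence $\ker(\pi|_{IT}) = I\otimes_k J$.

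But $\ker(\pi|_{IT}) = IT \cap \ker\pi = IT \cap JT = I\cap J$, and $I\otimes_k J = IJ$ under our identifications, giving the desired equality. The argument is essentially a diagram chase in which flatness over the field $k$ does all of the work; there is no real obstacle, only the bookkeeping of distinguishing the tensor product $I\otimes_k J$ from the ideal product $IJ$ and verifying they coincide inside $T$.
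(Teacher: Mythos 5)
Your proof is correct. The paper itself gives no argument for this lemma and simply cites \cite[Lemma 2.2(i)]{NgV}, so there is nothing to compare step by step; the content of the cited lemma is precisely the flatness computation you carry out. Your diagram chase is the standard way to prove it: identify $IT\cong I\otimes_k S$ and $JT\cong R\otimes_k J$ via flatness over the field $k$, use the surjection $T\twoheadrightarrow R\otimes_k(S/J)$ with kernel $JT$ to see that $IT\cap JT = \ker\bigl(I\otimes_k S\to I\otimes_k(S/J)\bigr) = I\otimes_k J$, and observe that the image of $I\otimes_k J$ in $T$ is exactly the product ideal $(IT)(JT)$. All the injectivity claims you rely on (the maps $I\otimes_k J \to I\otimes_k S$, $I\otimes_k S\to T$, and $I\otimes_k(S/J)\to R\otimes_k(S/J)$) do indeed hold because every $k$-vector space is flat. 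One small bookkeeping point worth making explicit, which you gesture at but do not quite spell out: an arbitrary element of $(IT)(JT)$ is a $T$-linear combination $\sum t_\ell(a_\ell\otimes b_\ell)$ with $a_\ell\in I$, $b_\ell\in J$, and writing each $t_\ell$ as a sum of pure tensors $r\otimes s$ shows that $t_\ell(a_\ell\otimes b_\ell)=\sum (ra_\ell)\otimes(sb_\ell)$ is again a sum of tensors from $I\times J$, so $(IT)(JT)$ really is the image of $I\otimes_k J$ rather than merely containing it.
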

This standard fact follows from \cite[Lemma 2.2(i)]{NgV}.

\begin{lem}
\label{lem_tensor}
Let $R, S$ be standard graded $k$-algebras, and $M, N$ be finitely generated graded modules over $R, S$, respectively. Then for $T=R \otimes_k S$, there are equalities
\begin{align*}
\depth (M\otimes_k N)&=\depth M+\depth N,\\
\pdim_T (M\otimes_k N)&=\pdim_R M+\pdim_S N,\\
\reg_T (M\otimes_k N)&=\reg_R M+ \reg_S N,\\
\lind_T (M\otimes_k N)&=\lind_R M+\lind_S N.
\end{align*}
\end{lem}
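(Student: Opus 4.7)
The plan is to construct everything from a single gadget: the tensor product over $k$ of the minimal graded free resolutions. Let $F_\bullet$ be the minimal graded free resolution of $M$ over $R$ and $G_\bullet$ the minimal graded free resolution of $N$ over $S$. The total complex $F_\bullet\otimes_k G_\bullet$ is a complex of free $T$-modules. Because $k$ is a field, tensoring over $k$ is exact, so the Künneth formula for chain complexes of $k$-vector spaces applies to show that the homology of $F_\bullet\otimes_k G_\bullet$ is $M\otimes_k N$ concentrated in homological degree $0$. Minimality is immediate: the entries of the differential land in $\mm_R\otimes_k S + R\otimes_k \mm_S\subseteq \mm_T$. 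Hence $F_\bullet\otimes_k G_\bullet$ is the minimal graded free resolution of $M\otimes_k N$ over $T$. The equalities for $\pdim$ and $\reg$ follow at once: the length of the total complex is $\pdim_R M+\pdim_S N$, and the twist in homological degree $n$ of the tensor product is $\max_{i+j=n}\{-(a_i+b_j)\}$ where $-a_i,-b_j$ are the twists appearing in $F_i,G_j$, so $j-n$ is maximized by adding the maxima from each factor.

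For the depth equality, the standard route is the Künneth formula for local cohomology with respect to the graded maximal ideals. Choosing homogeneous generators $x_1,\ldots,x_m$ of $\mm_R$ and $y_1,\ldots,y_n$ of $\mm_S$, the Čech complex $C^\bullet_{\mm_T}(T)$ factors as $C^\bullet_{\mm_R}(R)\otimes_k C^\bullet_{\mm_S}(S)$, so tensoring with $M\otimes_k N$ and applying Künneth over $k$ yields
\[
H^i_{\mm_T}(M\otimes_k N)\cong \bigoplus_{p+q=i} H^p_{\mm_R}(M)\otimes_k H^q_{\mm_S}(N).
\]
Taking the smallest index of non-vanishing gives $\depth(M\otimes_k N)=\depth_R M+\depth_S N$.

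For the linearity defect, the strategy is to show that the linear part of the total tensor resolution factors as a tensor product of linear parts. Using $\mm_T^a(F\otimes_k G)=\sum_{b+c=a}\mm_R^b F\otimes_k \mm_S^c G$, one checks that the $\mm_T$-adic filtration on $F_\bullet\otimes_k G_\bullet$ is the tensor product of the filtrations on each factor. Passing to associated graded complexes then gives an isomorphism $\linp^T(F_\bullet\otimes_k G_\bullet)\cong \linp^R F_\bullet\otimes_k \linp^S G_\bullet$ of complexes of $\gr_{\mm_T}(T)$-modules, where we note $\gr_{\mm_T}(T)\cong \gr_{\mm_R}(R)\otimes_k \gr_{\mm_S}(S)$. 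Another application of Künneth over $k$ produces
\[
H_i\bigl(\linp^T(F_\bullet\otimes_k G_\bullet)\bigr)\cong \bigoplus_{p+q=i} H_p(\linp^R F_\bullet)\otimes_k H_q(\linp^S G_\bullet),
\]
and reading off the top index of non-vanishing homology gives the claimed equality of linearity defects.

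I expect the main technical obstacle to be the linearity defect statement, specifically the identification $\linp^T(F_\bullet\otimes_k G_\bullet)\cong \linp^R F_\bullet\otimes_k \linp^S G_\bullet$. One has to verify compatibility of the $\mm_T$-adic filtration with the tensor-product grading on the total complex and show the resulting associated graded complex splits as the claimed tensor product; the other three equalities are essentially formal consequences of the Künneth formula applied either to the resolution itself or to the Čech complex.
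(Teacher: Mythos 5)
Your proof is correct and follows the same route that the paper delegates to its references: the paper cites Goto--Watanabe's K\"unneth-type decomposition of local cohomology for the depth equality (exactly your \v{C}ech-complex argument) and~\cite[Lemma 2.3]{NgV} for the remaining three equalities, which are obtained by the same tensor-of-minimal-resolutions plus K\"unneth argument you spell out, including the identification $\linp^T(F_\bullet\otimes_k G_\bullet)\cong \linp^R F_\bullet\otimes_k \linp^S G_\bullet$ coming from the convolution of the two standard filtrations. The only blemish is notational: in the regularity paragraph you reuse $j$ both for a homological index and for the internal degree when writing ``$j-n$ is maximized,'' but the intended computation (additivity of $\sup\{\text{internal degree}-\text{homological degree}\}$ under K\"unneth) is sound.
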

This lemma is folklore. The first part follows from the description of local cohomology of tensor product due to Goto and Watanabe \cite[Theorem 2.2.5]{GW}. For the remaining assertions, see \cite[Lemma 2.3]{NgV}. 

Recall that a morphism of noetherian local rings $(R,\mm)\xrightarrow{\theta} (S,\nn)$ is an {\it algebra retract} if there exists a local homomorphism $ (S,\nn)\xrightarrow{\phi} (R,\mm)$ such that $\phi\circ \theta$ is the identity map of $R$. In that case, $\phi$ is called the retraction map of $\theta$.
\begin{lem}[{\cite[Lemma 2.4]{NgV}}]
\label{lem_retract}
Let $ (R,\mm)\xrightarrow{\theta} (S,\nn)$ be an algebra retract of noetherian local rings with the retraction map $ S\xrightarrow{\phi} R$. Let $I\subseteq \mm$ be an ideal of $R$. Let $J\subseteq \nn$ be an ideal containing $\theta(I)S$ such that $\phi(J)R=I$. Then there are inequalities
\begin{align*}
\lind_R (R/I) &\le \lind_S (S/J),\\
\lind_R I &\le \lind_S J. 
\end{align*}
\end{lem}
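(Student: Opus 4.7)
The plan is a two-step reduction. First, I would exhibit $R/I$ (resp.\ $I$) as an $R$-module direct summand of $S/J$ (resp.\ $J$) by means of the algebra retract. The hypothesis $\theta(I)S \subseteq J$ provides a well-defined $R$-linear map $\bar\theta : R/I \to S/J$ (viewing $S/J$ as an $R$-module through $\theta$), while $\phi(J)R = I$ provides $\bar\phi : S/J \to R/I$. The retract identity $\phi \circ \theta = \mathrm{id}_R$ yields $\bar\phi \circ \bar\theta = \mathrm{id}_{R/I}$, so $R/I$ is an $R$-module retract of $S/J$; the same argument gives $I$ as an $R$-module retract of $J$. Since the minimal $R$-free resolution of a direct summand is a direct summand of the minimal $R$-free resolution of the ambient module, the linear parts split accordingly, and so linearity defect is monotone under $R$-module retraction: $\lind_R(R/I) \le \lind_R(S/J)$ and $\lind_R I \le \lind_R J$.

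Second, I would establish the change-of-rings inequality $\lind_R M \le \lind_S M$ for every $S$-module $M$ regarded as an $R$-module via $\theta$. Combining the two steps yields the chain $\lind_R(R/I) \le \lind_R(S/J) \le \lind_S(S/J)$, and likewise $\lind_R I \le \lind_R J \le \lind_S J$, completing the proof. To prove the change-of-rings bound, I would start from a minimal graded $S$-free resolution $G_\bullet \to M$. The locality $\theta(\mm^n) \subseteq \nn^n$ means the $\mm$-adic filtration on $G_\bullet$ (pulled back through $\theta$) is coarser than the $\nn$-adic one, and the retract splitting $S = \theta(R) \oplus \ker \phi$ of $R$-modules makes each $G_i$ an $R$-flat module. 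Invoking the Tor-theoretic characterization of linearity defect (in the spirit of Lemma \ref{lem_exactseq}, and more precisely \c{S}ega's criterion from \cite{Se}), the vanishing on the $S$-side of the relevant maps of Tor transfers through this splitting to the corresponding vanishing on the $R$-side.

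The main obstacle, and the place where care is needed, is the change-of-rings step, since $M = S/J$ need not be finitely generated over $R$ and $G_\bullet$ is not generally an $R$-free resolution of $M$. The retract splitting of $S$ as an $R$-module is the essential device that sidesteps these issues: it provides $R$-flatness of the $G_i$ and compatible splittings of the relevant Tor modules, which combine with \c{S}ega's criterion to carry out the comparison. I would also note that the same scheme applies verbatim to the inequality $\lind_R I \le \lind_S J$, since $I$ is extracted from $J$ by the same retract construction.
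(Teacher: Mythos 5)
Your plan factors the desired inequality through an intermediate quantity $\lind_R(S/J)$, and that is where the proposal runs into trouble. In general $S$ is not finitely generated as an $R$-module (e.g.\ $R\hookrightarrow R\otimes_k S$ in the paper's setting), so $S/J$ is not a finitely generated $R$-module and $\lind_R(S/J)$ is not a quantity the theory is set up to handle: minimal free resolutions, the filtration $\Fcc^i X$, and \c{S}ega's Tor-criterion are all formulated for finitely generated modules. Step 1 of your argument is therefore already on shaky ground, not because the retraction $\bar\phi\circ\bar\theta=\mathrm{id}_{R/I}$ is wrong (it is correct, and likewise $I$ is an $R$-module retract of $J$), but because the conclusion $\lind_R(R/I)\le\lind_R(S/J)$ invokes an invariant of a non-finitely-generated module.

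Step 2 is the more serious gap. The change-of-rings inequality $\lind_R M \le \lind_S M$ for an $S$-module $M$ restricted along $\theta$ is not a known general fact, and your sketch of a proof does not close it. The claim that the retract splitting $S=\theta(R)\oplus\Ker\phi$ makes each $G_i$ $R$-flat presupposes that $\Ker\phi$ is $R$-flat, which is not part of the hypotheses; for a bare algebra retract of noetherian local rings one only knows $S\cong R\oplus\Ker\phi$ as $R$-modules. Even granting $R$-flatness, a minimal $S$-free resolution $G_\bullet$ of $M$ is not an $R$-free resolution, and comparing the $\mm$-adic and $\nn$-adic filtrations on $G_\bullet$ does not by itself produce the vanishing statements that \c{S}ega's criterion requires over $R$. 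The route actually used in \cite{NgV} sidesteps both issues: the retract $\phi\circ\theta=\mathrm{id}_R$ yields, for every $i\ge 0$, a split injection $\Tor^R_i(k,R/I)\hookrightarrow\Tor^S_i(k,S/J)$ (with retraction induced by $\phi$), and the same holds for the truncations $\mm^jM/\mm^{j+1}M$ appearing in \c{S}ega's Tor-theoretic characterization of $\lind$, using $\phi(\nn^j J)R=\mm^j I$ and $\theta(\mm^j I)S\subseteq \nn^j J$. The vanishing of the relevant Tor maps over $S$ then forces their vanishing over $R$ directly, giving $\lind_R(R/I)\le\lind_S(S/J)$ and $\lind_R I\le\lind_S J$ in one step, with no detour through $\lind_R$ of an $S$-module and no flatness hypothesis.
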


\section{Fiber products}
\label{sect_fibprod}
Let $(A,\mm_A)$ and $(B,\mm_B)$ be noetherian local rings with the same residue field $k=A/\mm_A \cong B/\mm_B$. Let $\vepsilon_A: A\to A/\mm_A$ and $\vepsilon_B: B\to B/\mm_B$ be the canonical maps. The {\it fiber product} $A\times_k B$ of $A$ and $B$ over $k$ fits into the following pullback diagram.
\begin{displaymath}
\xymatrix{A\times_k B \ar[r]^{\pi_A} \ar[d]_{\pi_B} & A \ar[d]^{\vepsilon_A}\\
           B  \ar[r]_{\vepsilon_B}  &  k
}
\end{displaymath}
Hence $A\times_k B=\{(x,y)\in A\times B: \vepsilon_A(x)=\vepsilon_B(y)\}$. Denote $P=A\times_k B$. From \cite[Lemma 1.2]{AAF}, $P$ is a local ring with the maximal ideal $\mm_A\oplus \mm_B$. We can check that $\pi_A,\pi_B$ are surjective and $\Ker \pi_A=\mm_B, \Ker \pi_B=\mm_A$. In particular, they induce ring isomorphisms $P/\mm_B \cong A$ and $P/\mm_A \cong B$.

We have an exact sequence of $P$-modules
\begin{equation}
\label{eq_exactseq_fiberprod}
0\longrightarrow A\times_k B \longrightarrow A\oplus B \xrightarrow{(\vepsilon_A,-\vepsilon_B)}  k \longrightarrow 0.
\end{equation}
The following result due to Lescot is well-known; see for example \cite[(3.2.1)]{CSV}. We include another elementary proof for it.
\begin{lem}[Lescot]
\label{lem_depth_fp}
There is an equality
\[
\depth(A\times_k B)=\min\{1,\depth A, \depth B\}.
\]
\end{lem}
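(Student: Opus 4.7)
The plan is to apply standard depth machinery to the short exact sequence \eqref{eq_exactseq_fiberprod} of $P$-modules, where $P = A \times_k B$.

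First, I would identify the depths of its three terms as $P$-modules. The surjection $\pi_A \colon P \twoheadrightarrow A$ has kernel $\mathfrak{m}_B$, so $A$ is identified with $P/\mathfrak{m}_B$; by the standard independence of depth under change of rings through the annihilator, $\depth_P A = \depth A$, and likewise $\depth_P B = \depth B$. Also $\depth_P k = 0$ since $k$ is the residue field of $P$. Applying the ordinary depth lemma to \eqref{eq_exactseq_fiberprod} then yields the lower bound $\depth P \ge \min\{\depth A, \depth B, 1\}$ at once.

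For the matching upper bound I would invoke the $\operatorname{Ext}_P^\bullet(k,-)$ long exact sequence attached to \eqref{eq_exactseq_fiberprod} and split into two cases. When $\min\{\depth A, \depth B\} \ge 1$, one has $\operatorname{Hom}_P(k, A \oplus B) = 0$, so the connecting map forces $k \cong \operatorname{Hom}_P(k,k)$ to inject into $\operatorname{Ext}_P^1(k, P)$, whence $\depth P \le 1$. Otherwise, say $\depth A = 0$, and choose a nonzero $a \in A$ annihilated by $\mathfrak{m}_A$. Provided $A$ is not a field, such an $a$ necessarily lies in $\mathfrak{m}_A$ (otherwise $a$ would be a unit, forcing $\mathfrak{m}_A = 0$). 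Consequently the element $(a,0) \in A \oplus B$ has zero image in $k$, so it lifts through $P \hookrightarrow A \oplus B$ to a nonzero element killed by $\mathfrak{m}_P = \mathfrak{m}_A \oplus \mathfrak{m}_B$, proving $\depth P = 0$.

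The main obstacle is the construction of this socle element of $P$ in the zero-depth case: it hinges on the small but crucial observation that the socle of a non-field noetherian local ring lies inside its maximal ideal. One should also separately dispose of the trivial cases in which one of $A, B$ is itself a field, since then $P$ is canonically isomorphic to the other factor and the claimed formula is best read through that identification.
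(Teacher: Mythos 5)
Your proposal is correct and takes essentially the same route as the paper: the same short exact sequence \eqref{eq_exactseq_fiberprod}, the depth lemma (whose equality case your $\operatorname{Ext}^\bullet_P(k,-)$ argument merely unrolls) when $\min\{\depth A,\depth B\}\ge 1$, and the same socle-element construction $(x,0)$ when $\depth A=0$. The one place you are slightly more careful than the paper is in noting that the socle element must lie in $\mathfrak m_A$ (hence the need to exclude the case $A=k$), a point the paper tacitly builds into the phrase ``there exists $x\in\mathfrak m_A$ such that $\ann_A(x)=\mathfrak m_A$'' without comment; as you observe, when one factor is a field the fiber product degenerates to the other factor and the statement is read under the standing convention that both $\mathfrak m_A$ and $\mathfrak m_B$ are nonzero.
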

\begin{proof}
If $\min\{\depth A, \depth B\}\ge 1$, using the depth lemma for the sequence \eqref{eq_exactseq_fiberprod}, we get $\depth(A\times_k B)=1$.

Assume that $\min\{\depth A, \depth B\}=0$, for example $\depth A=0$. Then there exists $x\in \mm_A$ such that $\ann_A(x)=\mm_A$. Since $(x,0)$ belongs to the maximal ideal of $P$, and $\ann_P((x,0)) \supseteq \mm_A\oplus \mm_B$, we deduce $\depth P=0$.
\end{proof}
Now we turn to the case of graded algebras. Let $A=k[x_1,\ldots,x_m]/I$ and $B=k[y_1,\ldots,y_n]/J$ be standard graded $k$-algebras, where $I\subseteq (x_1,\ldots,x_m)^2$, $J\subseteq (y_1,\ldots,y_n)^2$ are homogeneous ideals. The homomorphism $k\xrightarrow{\iota} A\times_k B$ given by $u\mapsto (u,u)$ makes $A\times_k B$ into a $k$-algebra. 

Denote $\mm=(x_1,\ldots,x_m)$, $\nn=(y_1,\ldots,y_n)$ and 
$$
P=k[x_1,\ldots,x_m,y_1,\ldots,y_n]/(I+J+\mm\nn).
$$
Write $\overline{a}$ for the residue class of $a$ in a suitable quotient ring. Any element $ u\in P$ can be written uniquely as $u=\overline{c+f(x)+g(y)}$, where $f\in \mm, g\in \nn$. The homomorphism from $P$ to $A\times_k B$ sending $u$ to $(\overline{c+f(x)},\overline{c+g(y)})$ is an isomorphism. Hence we have
\begin{lem}
\label{lem_fiberprod_graded}
There is an isomorphism of $k$-algebras $P\cong A\times_k B$.
\end{lem}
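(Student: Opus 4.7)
The plan is to construct an explicit $k$-algebra homomorphism $\bar\varphi\colon P\to A\times_k B$ and verify bijectivity via a canonical decomposition of elements of the polynomial ring $k[x,y]$. First I would define $\varphi\colon k[x_1,\ldots,x_m,y_1,\ldots,y_n]\to A\times_k B$ on generators by $\varphi(x_i)=(\overline{x_i},0)$ and $\varphi(y_j)=(0,\overline{y_j})$; each image lies in $A\times_k B$ because $\vepsilon_A(\overline{x_i})=0=\vepsilon_B(0)$ and symmetrically for $y_j$. I would then check that $\varphi$ kills $I+J+\mm\nn$: an element $f\in I$ lies in $k[x]$, so $\varphi(f)=(\overline{f},0)=(0,0)$; likewise for $J$; and $\varphi(x_iy_j)=(\overline{x_i},0)\cdot(0,\overline{y_j})=(0,0)$, which handles $\mm\nn$. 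Consequently $\varphi$ descends to a $k$-algebra map $\bar\varphi\colon P\to A\times_k B$.

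The analysis of $\bar\varphi$ rests on the tensor bigrading of $k[x,y]=k[x]\otimes_k k[y]$. As a $k$-vector space, $k[x,y]=k\oplus k[x]_+\oplus k[y]_+\oplus\mm\nn$; accordingly each polynomial decomposes uniquely as $c+f(x)+g(y)+h(x,y)$ with $c\in k$, $f\in k[x]_+$, $g\in k[y]_+$, $h\in\mm\nn$. Since $\mm\nn$ is killed in $P$, the mixed part $h$ vanishes and every class in $P$ is represented by some $c+f(x)+g(y)$. The key uniqueness claim needed is: if $c+f+g\in I+J+\mm\nn$ with $c\in k$, $f\in k[x]_+$, $g\in k[y]_+$, then $c=0$, $f\in I$, and $g\in J$. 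This is immediate from the same four-way decomposition, since $I$ sits entirely inside the summand $k[x]_+$, $J$ entirely inside $k[y]_+$, and $\mm\nn$ entirely inside the mixed summand, while none of these ideals meets the constant summand $k$.

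With the decomposition in hand, both surjectivity and injectivity of $\bar\varphi$ follow without difficulty. Given $(\overline{a},\overline{b})\in A\times_k B$, the fiber-product condition $\vepsilon_A(\overline{a})=\vepsilon_B(\overline{b})$ forces a common constant $c\in k$, so $(\overline{a},\overline{b})=(\overline{c+\tilde f},\overline{c+\tilde g})$ for lifts $\tilde f\in k[x]_+$, $\tilde g\in k[y]_+$, which is the image of $\overline{c+\tilde f+\tilde g}\in P$. For injectivity, if $\bar\varphi(\overline{c+f+g})=(0,0)$ then $c+f\in I$ and $c+g\in J$; the constant terms must vanish because $I\subseteq\mm$ and $J\subseteq\nn$, whence $c=0$, $f\in I$, $g\in J$, giving $\overline{c+f+g}=0$ in $P$. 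The only substantive point in the whole argument is the clean summand-wise analysis of $I+J+\mm\nn$, which is the main (and really the only) issue to verify carefully; beyond that the proof is a routine unwinding of definitions.
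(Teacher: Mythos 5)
Your proof is correct and follows essentially the same route as the paper's, which likewise reduces everything to the unique decomposition $u=\overline{c+f(x)+g(y)}$ and the very same map $u\mapsto(\overline{c+f},\overline{c+g})$. The one small point worth tightening is your claim that $I$ "sits entirely inside the summand $k[x]_+$": that is true of the original ideal $I\subseteq k[x]$, but its extension $Ik[x,y]$ spills into $\mm\nn$; it is precisely the hypotheses $I\subseteq\mm$ and $J\subseteq\nn$ that give $Ik[x,y]+Jk[x,y]+\mm\nn = I\oplus J\oplus \mm\nn$ as a direct sum of $k$-subspaces of $k\oplus k[x]_+\oplus k[y]_+\oplus \mm\nn$, which is exactly the decomposition your uniqueness claim needs.
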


In the remaining  of this section, we determine the linearity defect and regularity of fiber products of ideals over polynomial rings, or more generally, Koszul algebras. Notably, the result does not depend on the characteristic of the base field. 

\begin{prop}
\label{prop_depthreg_fiberprod}
Let $(R,\mm)$ and $(S,\nn)$ be standard graded algebras over $k$. Let $I\subseteq \mm^2, J\subseteq \nn^2$ be homogeneous ideals. Denote $T=R\otimes_k S$ and $F=I+J+\mm\nn \subseteq T$ the fiber product of $I$ and $J$. 
\begin{enumerate}[\quad \rm(i)]
\item There is an equality $\depth(T/F)=\min\{1,\depth(R/I),\depth(S/J)\}$.

If moreover, $(R,\mm)$ and $(S,\nn)$ are standard graded polynomial rings of positive dimension, and either $I$ or $J$ is non-zero, then
\[
\depth F=\min\{2,\depth I, \depth J\}.
\]
\item Assume additionally that $R$ and $S$ are Koszul algebras. Then there is an equality $\reg_T F=\max\{\reg_R I, \reg_S J\}$.
\end{enumerate}
\end{prop}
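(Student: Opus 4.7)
The plan is to extract everything from the structural isomorphism $T/F\cong R/I\times_k S/J$ (Lemma \ref{lem_fiberprod_graded}) together with the induced short exact sequence of $T$-modules
\begin{equation*}
0 \longrightarrow T/F \longrightarrow R/I \oplus S/J \longrightarrow k \longrightarrow 0.
\end{equation*}

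For part (i), the first equality is Lescot's Lemma \ref{lem_depth_fp} applied via this isomorphism, after noting that the graded maximal ideal of $T$ surjects onto that of $T/F$ so that the two depths agree. For the second equality I apply the depth lemma to $0\to F\to T\to T/F\to 0$: since $\depth T=\dim R+\dim S\geq 2$, we get $\depth F=\min\{2,\depth(T/F)+1\}$. The elementary identity that in a polynomial ring $R$ every non-zero proper homogeneous ideal $I$ satisfies $\depth_R I=\depth_R(R/I)+1$ (from $0\to I\to R\to R/I\to 0$ and $\dim(R/I)<\depth R$) then converts the formula into $\min\{2,\depth I,\depth J\}$.

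For part (ii) I first observe that $T=R\otimes_k S$ is Koszul because $R$ and $S$ are; in particular $\reg_T k=0$ and, crucially, $\Tor^T_i(k,k)_j=0$ whenever $j\neq i$. Combining Lemma \ref{lem_tensor} with the $T$-module identifications $R/I\cong (R/I)\otimes_k(S/\nn)$ and $S/J\cong (R/\mm)\otimes_k(S/J)$ (and $\reg_S k=\reg_R k=0$) gives $\reg_T(R/I)=\reg_R(R/I)$ and $\reg_T(S/J)=\reg_S(S/J)$. Now apply $\Tor^T(k,-)$ to the displayed sequence. Koszul vanishing makes the map $\Tor^T_i(k,R/I\oplus S/J)_j\to\Tor^T_i(k,k)_j$ trivially zero for all $j\neq i$, so the preceding map
\begin{equation*}
\Tor^T_i(k,T/F)_j \longrightarrow \Tor^T_i(k,R/I\oplus S/J)_j
\end{equation*}
is surjective in every such bidegree; this yields the lower bound $\reg_T(T/F)\geq\max\{\reg_R(R/I),\reg_S(S/J)\}$. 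The companion upper bound $\reg_T(T/F)\leq\max\{\reg_R(R/I),\reg_S(S/J),1\}$ is read off the same sequence via $\reg_T k=0$, and the standing hypothesis that at least one of $I,J$ is non-zero forces $\max\{\reg_R(R/I),\reg_S(S/J)\}\geq 1$, producing equality. A final application of the long exact sequence of $\Tor^T(k,-)$ to $0\to F\to T\to T/F\to 0$ (using $\reg_T T=0$) gives $\reg_T F=\reg_T(T/F)+1$, and $\reg_R I=\reg_R(R/I)+1$ rewrites the identity as $\max\{\reg_R I,\reg_S J\}$.

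The delicate point is the Koszul cancellation in the long exact sequence: it is precisely the vanishing $\Tor^T_i(k,k)_j=0$ for $j\neq i$ that allows the regularity of $T/F$ to coincide with that of $R/I\oplus S/J$. Without the Koszul hypothesis on $R$ and $S$, the copy of $k$ on the right of the three-term sequence would contribute an extra $+1$ to the bound on $\reg_T(T/F)$ and break the clean formula. The rest of the argument is a straightforward bookkeeping of short exact sequences.
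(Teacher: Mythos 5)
Your argument is correct, and it follows the paper's own route: part (ii) is precisely the long exact sequence of $\Tor^T(k,-)$ applied to $0\to T/F\to R/I\oplus S/J\to k\to 0$, with the Koszul hypothesis killing the right-hand $k$-contributions in the off-diagonal bidegrees so that $\reg_T(T/F)$ coincides with $\max\{\reg_T(R/I),\reg_T(S/J)\}$, followed by the shift $\reg_T F=\reg_T(T/F)+1$. The only cosmetic difference is in how you identify $\reg_T(R/I)$ with $\reg_R(R/I)$: you invoke Lemma \ref{lem_tensor} applied to $R/I\cong(R/I)\otimes_k(S/\nn)$, while the paper cites Conca--De Negri--Rossi (loc.\ cit.\ Prop.\ 3.3) after observing $\reg_T R=0$; both are equivalent and correct. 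For part (i) the paper is terse (``essentially a translation of Lemma \ref{lem_depth_fp}''), and your computation correctly supplies the missing bookkeeping: $\depth F=\depth(T/F)+1$ since $\depth(T/F)\leq 1<\depth T$, and $\depth I=\depth(R/I)+1$ for $I\neq 0$ since $\depth(R/I)\leq\dim(R/I)<\dim R=\depth R$. One small caveat you handle implicitly and should keep in mind: if one of $I,J$ is zero, then the identity $\reg_R I=\reg_R(R/I)+1$ (resp.\ $\depth I=\depth(R/I)+1$) fails for the zero ideal and you need the convention $\reg_R(0)=-\infty$ (resp.\ $\depth(0)=+\infty$) to make the final formulas read correctly; the paper's line ``$\reg_R(R/I),\reg_S(S/J)\ge 1$'' in fact tacitly assumes both $I,J\neq 0$, matching the standing hypothesis of the abstract.
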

\begin{proof}
(i) This is essentially a translation of Lemma \ref{lem_depth_fp} to the graded case. 

(ii) From \eqref{eq_exactseq_fiberprod}, we have an exact sequence
\begin{equation}
\label{eq_exactseq}
0\to \frac{T}{F} \to \frac{R}{I} \oplus \frac{S}{J} \to k \to 0.
\end{equation}

By the assumption and Lemma \ref{lem_tensor}, $\reg_T (T/(\mm+\nn))=0$. Since $\reg_R(R/I)$, $\reg_S(S/J)\ge 1$, from the sequence \eqref{eq_exactseq}, we get
\[
\reg_T (T/F)=\max\{\reg_T (R/I),\reg_T (S/J)\}.
\]
Since $0=\reg_S (S/\nn)=\reg_T (T/\nn T)=\reg_T R$, by \cite[Proposition 3.3]{CDR}, $\reg_T(R/I)=\reg_R(R/I)$. Similarly, $\reg_T(S/J)=\reg_S(S/J)$. So $\reg_T (T/F)=\max\{\reg_R (R/I)$, $\reg_S (S/J)\}$, consequently
\[
\reg_T F=\max\{\reg_R I,\reg_S J\}.
\]
The proof is completed.
\end{proof}

Recall from Section \ref{subsect_Koszul} that over a Koszul algebra, componentwise linear modules and Koszul modules are the same objects. Hence Theorem \ref{thm_fpKoszul} below is a generalization of Conca and R\"omer's \cite[Theorem 4.1]{CR}. Observe that the condition $R$ and $S$ are Koszul in \ref{thm_fpKoszul} is irredundant: if one of these rings is not Koszul, then the fiber product of $I=(0)$ and  $J=(0)$ is not Koszul, since by Lemma \ref{lem_tensor}, 
$\lind_T (\mm\nn)=\lind_R \mm +\lind_S \nn \ge 1$.

\begin{thm}
\label{thm_fpKoszul}
Let $(R,\mm), (S,\nn)$ be Koszul algebras over $k$. Let $I\subseteq \mm^2, J\subseteq \nn^2$ be homogeneous ideals of $R,S$, respectively. Denote $F=I+J+\mm\nn$ the fiber product of $I$ and $J$ in $T=R\otimes_k S$. Then there is an equality
\[
\lind_T F =\max\{\lind_R I, \lind_S J\}.
\]
\end{thm}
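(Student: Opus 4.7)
For the lower bound $\max\{\lind_R I, \lind_S J\} \le \lind_T F$, I would invoke Lemma \ref{lem_retract} with the algebra retract $R \hookrightarrow T$ whose retraction $T \twoheadrightarrow R$ sends each $y_j$ to $0$. Since this retraction kills both $J$ and $\mm\nn$ while fixing $I$, we have $IT \subseteq F$ and $\phi(F)R = I$, yielding $\lind_R I \le \lind_T F$; the inequality $\lind_S J \le \lind_T F$ is symmetric.

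For the upper bound, the plan is to introduce the short exact sequence of $T$-modules
\[
0 \longrightarrow \mm\nn T \longrightarrow F \longrightarrow I \oplus J \longrightarrow 0,
\]
in which $I$ and $J$ on the right are viewed as $T$-modules through the two retractions. I establish this by first computing $\mm\nn T \cap (IT + JT) = I\nn T + \mm JT$ via the natural bigrading on $T = R \otimes_k S$, and then identifying $(IT + JT)/(I\nn T + \mm JT) \cong I \oplus J$ as $T$-modules (noting that $y_j$ kills the $I$-component and $x_i$ kills the $J$-component in the quotient). Lemma \ref{lem_tensor} then gives $\lind_T(\mm\nn T) = \lind_R \mm + \lind_S \nn = 0$ (using that $R$, $S$ are Koszul) and $\lind_T(I \oplus J) = \max\{\lind_R I, \lind_S J\}$.

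The first inequality of Lemma \ref{lem_exactseq} applied to this sequence will then deliver the desired $\lind_T F \le \max\{\lind_R I, \lind_S J\}$, provided I can establish the key claim: the connecting map $\Tor^T_{i+1}(k, I \oplus J) \to \Tor^T_i(k, \mm\nn T)$ vanishes for every $i \ge 0$. To prove this, I plan to build the minimal free resolution of $F$ via the horseshoe lemma and argue that the connecting chain maps $\phi_i \colon P^{I \oplus J}_i \to P^{\mm\nn T}_{i-1}$ can be chosen with image in $\mm_T \cdot P^{\mm\nn T}_{i-1}$. The base case $i = 1$ is direct: a trivial syzygy $y_j \cdot e^I_\alpha$ (present because $y_j$ acts as zero on $I$) lifts to $y_j f_\alpha \in \mm\nn T$; the hypothesis $I \subseteq \mm^2$ gives $f_\alpha = \sum_i x_i h_{\alpha, i}$ with $h_{\alpha, i} \in \mm$, so $y_j f_\alpha = \sum_i h_{\alpha, i}(x_i y_j) \in \mm \cdot P^{\mm\nn T}_0$, and the trivial $x_i$-syzygies of $J$ are handled symmetrically.

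The main obstacle is the inductive step. Given a minimal $\phi_{i-1}$, the element $-\phi_{i-1} d^{I \oplus J}$ automatically lands in $\mm_T^2 P^{\mm\nn T}_{i-2}$ (because $\phi_{i-1}$ and $d^{I \oplus J}$ are each $T$-linear with image in $\mm_T$), and one needs a preimage $\phi_i$ inside $\mm_T P^{\mm\nn T}_{i-1}$. My approach exploits the fact that the minimal free resolution of $\mm\nn T = \mm \otimes_k \nn$ over $T$ is the tensor product of the Koszul-type (hence linear) resolutions of $\mm$ over $R$ and $\nn$ over $S$, hence itself a linear resolution of a module generated in degree $2$. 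A degree-counting argument then shows that at internal degree $\ge i + 2$, any cycle in $P^{\mm\nn T}_{i-2}$ is automatically hit by $d^{\mm\nn T}$ from $\mm_T P^{\mm\nn T}_{i-1}$, supplying the required minimal lift and thereby closing the induction.
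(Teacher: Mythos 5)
Your proposal is correct, and it takes a genuinely different route from the paper's proof. The paper proves the upper bound by a two-step Betti splitting (first $F=H+J$ with $H=I+\mm\nn$ and $H\cap J=\mm J$, then $H=I+\mm\nn$), each step certified by establishing Tor-vanishing of maps such as $\mm J\to\mm\nn$ via the Koszul criterion of Lemma~\ref{lem_maxideal_Koszul}, and then feeding the resulting zero connecting maps into Lemma~\ref{lem_exactseq}. You instead package everything into the single short exact sequence $0\to\mm\nn T\to F\to I\oplus J\to 0$, whose correctness you check via the bigrading and the identity $(IT+JT)\cap\mm\nn T=I\nn T+\mm JT$, and you show the connecting maps $\Tor^T_{i+1}(k,I\oplus J)\to\Tor^T_i(k,\mm\nn T)$ vanish. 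Both proofs hinge on $\mm\nn T=\mm\otimes_k\nn$ being Koszul, but your argument deploys it purely for degree reasons rather than through the Betti-splitting criterion, which is an attractive simplification; on the other hand, the paper's Betti-splitting framework is the one that powers the rest of the paper (higher powers $F^s$ in Sections~\ref{sect_Bettisplit}--\ref{sect_lind}), so the authors have an incentive to set it up here already.

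One remark on your inductive horseshoe construction of the comparison maps $\phi_i$: it is heavier than necessary, and you can dispense with the induction entirely. Since $\mm\nn T$ is generated in degree $2$ and has a linear $T$-resolution (both $R$ and $S$ Koszul, and $\mm\nn T=\mm\otimes_k\nn$), the free module $P^{\mm\nn T}_{i-1}$ is generated exactly in degree $i+1$, hence $\Tor^T_{i}(k,\mm\nn T)$ is concentrated in internal degree $i+2$. On the other side, $I\oplus J$ is generated in degrees $\ge 2$, so by minimality $P^{I\oplus J}_{i+1}$ is generated in degrees $\ge i+3$, hence $\Tor^T_{i+1}(k,I\oplus J)$ is concentrated in internal degrees $\ge i+3$. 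As the connecting homomorphism preserves internal degree, it vanishes identically — no choice of lifts, no induction, and no inductive preservation of $\text{image}\subseteq\mm_T P^{\mm\nn T}_{\bullet}$ is needed, since any graded $T$-linear map $P^{I\oplus J}_i\to P^{\mm\nn T}_{i-1}$ automatically lands in $\mm_T P^{\mm\nn T}_{i-1}$ for these degree reasons.
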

\begin{proof}
By the graded analog of Lemma \ref{lem_retract}, we have $\max\{\lind_R I, \lind_S J\} \le \lind_T F$. It remains to show the reverse inequality.

\noindent
\textsf{Step 1}: Denote $H=I+\mm\nn$. We claim that $F=H+J$ is a Betti splitting. First we note that $H\cap J=\mm J$.

Indeed, $\mm J \subseteq \mm \nn \subseteq H$, hence $\mm J \subseteq H\cap J$. Conversely, as $I\subseteq \mm$,
\[
H\cap J \subseteq \mm \cap J=\mm J,
\]
thanks to Lemma \ref{lem_intersect}.

Now by Lemma \ref{lem_criterion_Bettisplit}, we need to show that the map $\mm J\to H$ and $\mm J\to J$ are Tor-vanishing. Clearly the map $\mm \to R$ is Tor-vanishing, so applying $-\otimes_k J$, we get that $\mm J\to J$ is Tor-vanishing.

Since $\nn$ is Koszul and $J\subseteq \nn^2$, by Lemma \ref{lem_maxideal_Koszul}, the map $J\to \nn$ is Tor-vanishing. Applying $-\otimes_k \mm$, we get that $\mm J\to \mm \nn$ is Tor-vanishing. Since  $\mm\nn\subseteq H$, the map $\mm J\to H$ is Tor-vanishing as well. Thus $F=H+J$ is a Betti splitting.

\medskip
\noindent
\textsf{Step 2}: There is a short exact sequence
\[
0 \longrightarrow  H \longrightarrow  F \longrightarrow \frac{J}{H \cap J} =\frac{J}{\mm J} \longrightarrow 0.
\]
Since $F=H+J$ is a Betti splitting, by Lemma \ref{lem_criterion_Bettisplit}, $\Tor^T_i(k,H) \to \Tor^T_i(k,F)$ is injective for all $i\ge 0$. In particular, the connecting map $\Tor^T_i(i, J/\mm J) \to \Tor^T_{i-1}(k,H)$ is zero for all $i$. Applying Lemma \ref{lem_exactseq}, we get
\[
\lind_T F \le \max\biggl\{\lind_T H, \lind_T \frac{J}{\mm J}\biggr\}=\max\{\lind_T H, \lind_S J\}.
\]
The equality follows from Lemma \ref{lem_tensor} and the fact that $R$ is Koszul.

\medskip
\noindent
\textsf{Step 3}: Arguing similarly as above, we have a Betti splitting $H=I+\mm\nn$ and 
\[
\lind_T H \le \max\{\lind_T (\mm\nn),\lind_R I\}=\lind_R I.
\]
The equality follows from Lemma \ref{lem_tensor} and the fact that $R$ and $S$ are Koszul.

From Steps 2 and 3, we get $\lind_T H \le \max\{\lind_R I, \lind_S J\}$, as desired.
\end{proof}

\section{Betti splittings for powers of fiber products}
\label{sect_Bettisplit} 
The following lemma is the crux of our results on higher powers of fiber products.
\begin{lem}
\label{lem_Torvanishing}
Let $(R,\mm)$ be a standard graded polynomial ring over $k$, and $I\subseteq \mm^2$ a homogeneous ideal. Let $s\ge 1$ be an integer. Assume further that one of the following conditions hold:
\begin{enumerate}[\quad \rm(i)]
\item  $\chara k=0$;
\item $I$ is a monomial ideal;
\item $I^t$ is Koszul for all $1\le t\le s$.
\end{enumerate}
Then for every $1\le t\le s$, the map $\mm^{s-t}I^t \to \mm^{s-t+1}I^{t-1}$ is Tor-vanishing.
\end{lem}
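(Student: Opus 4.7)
My plan is to handle the three cases separately, in each reducing the Tor-vanishing claim to a concrete ideal containment or to a factorization through a map already known to be Tor-vanishing. The uniform feature of all three arguments is that the hypothesis $I\subseteq \mm^{2}$ is exactly what provides the extra factor of $\mm$ on the right-hand side: it forces $\partial h/\partial x_{i}$ and $h/x_{i}$ to lie in $\mm$ for any (minimal) generator $h$ of $I$, and it lifts $\mm^{s-t}I^{t}$ into the larger submodule $\mm^{s-t+2}I^{t-1}$ of $\mm^{s-t+1}I^{t-1}$.

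For cases (i) and (ii), I would apply the differential criteria of Lemma \ref{lem_differential_criterion} and Lemma \ref{lem_differential_criterion_monomial}, respectively, so it suffices to verify $\partial(\mm^{s-t}I^{t})\subseteq \mm^{s-t+1}I^{t-1}$ (resp.\ the $\partial^{*}$-analogue). Writing a typical generator of $\mm^{s-t}I^{t}$ as $g\,h_{1}\cdots h_{t}$ with $g$ a monomial of degree $s-t$ and each $h_{j}$ a minimal (monomial) generator of $I\subseteq \mm^{2}$, the Leibniz rule decomposes $\partial(g\,h_{1}\cdots h_{t})/\partial x_{i}$ into the term $(\partial g/\partial x_{i})\,h_{1}\cdots h_{t}\in \mm^{s-t-1}I^{t}\subseteq \mm^{s-t+1}I^{t-1}$ (absorbing one copy of $I$ into $\mm^{2}$), and the terms $g\,(\partial h_{j}/\partial x_{i})\prod_{k\neq j}h_{k}\in \mm^{s-t+1}I^{t-1}$ because $h_{j}\in \mm^{2}$ forces $\partial h_{j}/\partial x_{i}\in \mm$. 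In the monomial case the analogous split for $f/x_{i}$ according to whether $x_{i}$ divides $g$ (giving $f/x_{i}\in \mm^{s-t-1}I^{t}\subseteq \mm^{s-t+1}I^{t-1}$) or divides some $h_{j}$ (giving $h_{j}/x_{i}\in \mm$ and $f/x_{i}\in \mm^{s-t+1}I^{t-1}$) yields the desired $\partial^{*}$-containment.

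For case (iii), I would iterate Lemma \ref{lem_maxideal_Koszul}. Since $R$ is a polynomial ring, it is Koszul; moreover $I^{t-1}$ is Koszul (by hypothesis when $t\geq 2$, and trivially when $t=1$ as $I^{0}=R$). Hence $\mm I^{t-1}$ is Koszul and $\mm I^{t-1}\to I^{t-1}$ is Tor-vanishing, and iterating, for every $j\geq 0$ the module $\mm^{j}I^{t-1}$ is Koszul and the inclusion $\mm^{j+1}I^{t-1}\hookrightarrow \mm^{j}I^{t-1}$ is Tor-vanishing. Specializing to $j=s-t+1$, the inclusion $\mm^{s-t+2}I^{t-1}\hookrightarrow \mm^{s-t+1}I^{t-1}$ is Tor-vanishing. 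Because $I\subseteq \mm^{2}$ gives $\mm^{s-t}I^{t}\subseteq \mm^{s-t+2}I^{t-1}$, the inclusion $\mm^{s-t}I^{t}\hookrightarrow \mm^{s-t+1}I^{t-1}$ factors through this Tor-vanishing map, and is therefore itself Tor-vanishing.

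The main obstacle, to the extent there is one, is case (iii): one has to notice that the hypothesis $I\subseteq \mm^{2}$ allows $\mm^{s-t}I^{t}$ to be lifted into the Koszul submodule $\mm^{s-t+2}I^{t-1}$, at which point Lemma \ref{lem_maxideal_Koszul}, applied the right number of times to $I^{t-1}$, does the rest. Cases (i) and (ii) are by contrast direct (and entirely parallel) bookkeeping computations with the Leibniz-type criteria.
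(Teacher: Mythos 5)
Your proposal is correct and follows the same route as the paper: cases (i) and (ii) via the differential criteria of Lemmas \ref{lem_differential_criterion} and \ref{lem_differential_criterion_monomial} together with the Leibniz-type containment $\partial(\mm^{s-t}I^t)\subseteq \mm^{s-t-1}I^t+\mm^{s-t}\partial(I)I^{t-1}\subseteq \mm^{s-t+1}I^{t-1}$ (the paper states this at the ideal level; your generator-by-generator computation is the same content), and case (iii) by iterating Lemma \ref{lem_maxideal_Koszul} to see that $\mm^{s-t+1}I^{t-1}$ is Koszul, then using $\mm^{s-t}I^t\subseteq \mm(\mm^{s-t+1}I^{t-1})$ to factor the inclusion through a Tor-vanishing map. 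No gaps.
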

\begin{proof}
First assume that $\chara k=0$. We apply the criterion of Lemma \ref{lem_differential_criterion}. Note that
\[
\partial(\mm^{s-t}I^t)\subseteq \partial(\mm^{s-t})I^t+\mm^{s-t}\partial(I^t) \subseteq \mm^{s-t-1}I^t+ \mm^{s-t}\partial(I)I^{t-1},
\]
which is contained in $\mm^{s-t+1}I^{t-1}$, since $I\subseteq \mm^2$.

If $I$ is a monomial ideal, we argue as above, replacing Lemma \ref{lem_differential_criterion} by Lemma \ref{lem_differential_criterion_monomial}.

Finally, assume that $I^t$ is Koszul for all $1\le t\le s$. By the graded analogue of Lemma \ref{lem_maxideal_Koszul}, $\mm^{s-t+1}I^{t-1}$ is Koszul. Since $I\subseteq \mm^2$, $\mm^{s-t}I^t\subseteq \mm(\mm^{s-t+1}I^{t-1})$. Hence again by Lemma \ref{lem_maxideal_Koszul}, the map $\mm^{s-t}I^t \to \mm^{s-t+1}I^{t-1}$ is Tor-vanishing.
\end{proof}
We know of no instance where the conclusion of Lemma \ref{lem_Torvanishing} fails in positive characteristic.
\begin{quest}
Let $(R,\mm)$ be a standard graded polynomial ring over $k$, and $I\subseteq \mm^2$ a homogeneous ideal. Is it true that for all $1\le t\le s$, the map $\mm^{s-t}I^t \to \mm^{s-t+1}I^{t-1}$ is Tor-vanishing?
\end{quest}
It is convenient for the subsequent discussions to introduce
\begin{notn}
\label{notn_RandS}
Let $(R,\mm), (S,\nn)$ be standard graded polynomial rings over $k$ such that their Krull dimensions are positive. Let $I\subseteq \mm^2$ and $J\subseteq \nn^2$ be homogeneous ideals of $R$ and $S$. Denote $T=R\otimes_k S$ and $F=I+J+\mm\nn$ the fiber product of $I$ and $J$. Denote $H=I+\mm\nn$.
\end{notn}
The important consequence of Lemma \ref{lem_Torvanishing} is
\begin{prop}
\label{prop_Betti-splitting_power}
Employ Notation \ref{notn_RandS}. Let $s\ge 1$ be an integer.
\begin{enumerate}[\quad \rm (1)]
 \item  There is an equality $F^s=H^s+\sum_{i=1}^s (\mm\nn)^{s-i}J^i$.
\item For each $1\le t\le s$, denote $G_t=H^s+\sum_{i=1}^t (\mm\nn)^{s-i}J^i$. Denote $G_0=H^s$. Then for every $1\le t\le s$, there is an equality
\[
G_{t-1}\cap (\mm\nn)^{s-t}J^t =\mm^{s-t+1}\nn^{s-t}J^t.
\]
\item Assume further that one of the following conditions holds:
\begin{enumerate}[\quad \rm(i)]
\item  $\chara k=0$;
\item $J$ is a monomial ideal;
\item $J^t$ is Koszul for all $1\le t\le s$.
\end{enumerate}
Then for all $1\le t\le s$, the decomposition $G_t=G_{t-1}+(\mm\nn)^{s-t}J^t$ is a Betti splitting.
\end{enumerate}
\end{prop}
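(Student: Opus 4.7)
The plan is to establish (1), (2), (3) in order, each part building on the previous.

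For part (1), I would expand $F^s = (I+J+\mm\nn)^s$ as a sum of terms $I^a(\mm\nn)^b J^c$ with $a+b+c=s$. The crux is that $IJ \subseteq \mm\cap\nn = \mm\nn$ in $T$ (using Lemma \ref{lem_intersect} together with $I\subseteq\mm$ and $J\subseteq\nn$), so any mixed term with $a\ge 1$ and $c\ge 1$ satisfies $I^a(\mm\nn)^b J^c \subseteq I^{a-1}(\mm\nn)^{b+1}J^{c-1}$. Iterating this absorption reduces each summand either to the case $a=0$, giving $(\mm\nn)^{s-c}J^c$ (a summand on the right if $c\ge 1$, or $(\mm\nn)^s\subseteq H^s$ if $c=0$), or to the case $c=0$, which is itself a summand of $H^s$. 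The reverse inclusion is immediate since $\mm\nn, J\subseteq F$.

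For part (2), I would first verify $G_{t-1} \subseteq \mm^{s-t+1}$ by checking summand by summand: $H^s\subseteq\mm^s$ (since $H\subseteq\mm$), and $(\mm\nn)^{s-i}J^i\subseteq\mm^{s-i}\subseteq\mm^{s-t+1}$ for $1\le i\le t-1$. Next I would invoke the standard $k$-linear identity that, for $k$-subspaces $A_1\subseteq A_2\subseteq R$ and $B_1\subseteq B_2\subseteq S$, inside $R\otimes_k S$ one has $(A_1\otimes_k B_2)\cap(A_2\otimes_k B_1)=A_1\otimes_k B_1$ (provable by extending $k$-bases). Applied with $A_1=\mm^{s-t+1}$, $A_2=\mm^{s-t}$, $B_1=\nn^{s-t}J^t$, $B_2=S$, and using that $(\mm\nn)^{s-t}J^t$ is the image of $\mm^{s-t}\otimes_k(\nn^{s-t}J^t)$ in $T$, this gives $\mm^{s-t+1}\cap(\mm\nn)^{s-t}J^t=\mm^{s-t+1}\nn^{s-t}J^t$, and combined with $G_{t-1}\subseteq\mm^{s-t+1}$ yields the inclusion $G_{t-1}\cap(\mm\nn)^{s-t}J^t\subseteq\mm^{s-t+1}\nn^{s-t}J^t$. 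For the reverse inclusion, a single application of $\mm J\subseteq\mm\nn$ gives $\mm^{s-t+1}\nn^{s-t}J^t\subseteq(\mm\nn)^{s-(t-1)}J^{t-1}$, a summand of $G_{t-1}$ when $t\ge 2$ or contained in $(\mm\nn)^s\subseteq H^s=G_0$ when $t=1$; the containment in $(\mm\nn)^{s-t}J^t$ is automatic.

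For part (3), by Lemma \ref{lem_criterion_Bettisplit} combined with part (2) it suffices to show the two inclusions $\phi_1\colon\mm^{s-t+1}\nn^{s-t}J^t\hookrightarrow(\mm\nn)^{s-t}J^t$ and $\phi_2\colon\mm^{s-t+1}\nn^{s-t}J^t\hookrightarrow G_{t-1}$ are Tor-vanishing over $T$. The map $\phi_1$ is the $R$-inclusion $\mm^{s-t+1}\hookrightarrow\mm^{s-t}$ tensored over $k$ with the identity on $\nn^{s-t}J^t$; the $R$-map is Tor-vanishing by Lemma \ref{lem_differential_criterion} when $\chara k=0$, by Lemma \ref{lem_differential_criterion_monomial} for monomial $I$, or by Lemma \ref{lem_maxideal_Koszul} since powers of $\mm$ are Koszul in a polynomial ring. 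For $\phi_2$, I would factor $\phi_2=\iota\circ\psi$, where $\psi\colon\mm^{s-t+1}\nn^{s-t}J^t\to(\mm\nn)^{s-(t-1)}J^{t-1}$ is the identity on $\mm^{s-t+1}$ tensored over $k$ with the $S$-inclusion $\nn^{s-t}J^t\hookrightarrow\nn^{s-(t-1)}J^{t-1}$ provided by Lemma \ref{lem_Torvanishing} applied to $(S,\nn,J)$ under the relevant hypothesis (i), (ii), or (iii), and $\iota$ is the inclusion into $G_{t-1}$ from part (2). In both cases, the Künneth identity $\Tor^T_i(k, M\otimes_k N)\cong\bigoplus_{p+q=i}\Tor^R_p(k,M)\otimes_k\Tor^S_q(k,N)$ together with functoriality promotes Tor-vanishing from the pieces up to $T$.

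The main obstacle will be setting up the $k$-subspace intersection identity in part (2) rigorously, and then in part (3) ensuring each inclusion is realized as a tensor product over $k$ of an $R$-map and an $S$-map with one factor the identity — this is what lets the Künneth formula transport Tor-vanishing faithfully. Once these ingredients are in place, the three parts assemble quickly.
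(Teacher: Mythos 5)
Your proposal is correct and follows essentially the same route as the paper: part (1) by absorbing mixed terms via $IJ\subseteq\mm\nn$ (the paper phrases this as an induction on $s$, you phrase it as direct absorption in the multinomial expansion, but the mechanism is identical), part (2) by showing $G_{t-1}\subseteq\mm^{s-t+1}$ and invoking the $R\otimes_k S$ intersection identity of Lemma \ref{lem_intersect}, and part (3) by the same two factorizations of the inclusion maps, using Lemma \ref{lem_maxideal_Koszul} for the $R$-side map $\mm^{s-t+1}\hookrightarrow\mm^{s-t}$ and Lemma \ref{lem_Torvanishing} on the $S$-side, transported to $T$ via the K\"unneth formula. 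The only cosmetic deviation is that you offer the differential/monomial criteria as alternatives for showing $\mm^{s-t+1}\hookrightarrow\mm^{s-t}$ is Tor-vanishing, where the Koszulness of $\mm^{s-t}$ alone already settles it unconditionally, as the paper notes.
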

\begin{proof}
(1) We use induction on $s\ge 1$. For $s=1$, the equality $F=H+J$ is obvious. Assume that the equality holds for $s\ge 1$, we prove it for $s+1$.

We have $F^{s+1}=H^{s+1}+JF^s$. Using the induction hypothesis,
\[
JF^s=JH^s+\sum_{i=1}^s (\mm\nn)^{s-i}J^{i+1}=JH^s+\sum_{i=2}^{s+1} (\mm\nn)^{s+1-i}J^i.
\] 
Now $H^s=(\mm\nn)^s+\sum_{j=1}^{s-1}I^j(\mm\nn)^{s-j}$. Hence
\[
F^{s+1}=H^{s+1}+JF^s=H^{s+1}+\sum_{i=1}^{s+1}(\mm\nn)^{s+1-i}J^i+\sum_{j=1}^{s-1}JI^j(\mm\nn)^{s-j}.
\]
It remains to show that $\sum_{j=1}^{s-1}JI^j(\mm\nn)^{s-j} \subseteq H^{s+1}$. Indeed, since $I\subseteq \mm^2,J\subseteq \nn^2$, for any $1\le j\le s-1$,
\[
JI^j(\mm\nn)^{s-j} =(JI)I^{j-1}(\mm\nn)^{s-j} \subseteq I^{j-1}(\mm\nn)^{s+2-j} \subseteq H^{s+1}.
\]
This finishes the induction.

(2) Note that $(\mm\nn)^{s-t+1}J^{t-1} \subseteq G_{t-1}$, so $\mm^{s-t+1}\nn^{s-t}J^t\subseteq G_{t-1}\cap (\mm\nn)^{s-t}J^t$. For the reverse inclusion, using $H\subseteq \mm$, we have
\[
G_{t-1}=H^s+\sum_{i=1}^t (\mm\nn)^{s-t+1}J^{t-1} \subseteq \mm^{s-t+1}.
\]
Hence
\[
G_{t-1}\cap (\mm\nn)^{s-t}J^t \subseteq \mm^{s-t+1} \cap \nn^{s-t}J^t=\mm^{s-t+1}\nn^{s-t}J^t,
\]
where the equality holds by Lemma \ref{lem_intersect}.

(3) To prove that $G_t=G_{t-1}+(\mm\nn)^{s-t}J^t$ is a Betti splitting, by Lemma \ref{lem_criterion_Bettisplit}, we have to show that $\mm^{s-t+1}\nn^{s-t}J^t \to G_{t-1}$ and $\mm^{s-t+1}\nn^{s-t}J^t\to (\mm\nn)^{s-t}J^t$ are Tor-vanishing maps.

Since $\mm^{s-t}$ is Koszul, we have that $\mm^{s-t+1} \to \mm^{s-t}$ is Tor-vanishing. Applying $-\otimes_k (\nn^{s-t}J^t)$, we get that $\mm^{s-t+1}\nn^{s-t}J^t\to (\mm\nn)^{s-t}J^t$ is Tor-vanishing.

Since the inclusion  $\mm^{s-t+1}\nn^{s-t}J^t \to G_{t-1}$ factors through $\mm^{s-t+1}\nn^{s-t}J^t \to (\mm\nn)^{s-t+1}J^{t-1}$, it remains to show that the last map is Tor-vanishing. As the map $-\otimes_k \mm^{s-t+1}$ is flat, we reduce to showing that $\nn^{s-t}J^t \to \nn^{s-t+1}J^{t-1}$ is Tor-vanishing. This follows from Lemma \ref{lem_Torvanishing}. The proof is concluded.
\end{proof}

\section{Regularity}
\label{sect_regularity}
Now we determine the regularity of higher powers of fiber products. 
\begin{thm}
\label{thm_reg_fiberprod_polynomialrings}
Employ Notation \ref{notn_RandS}. Let $s\ge 1$ be an integer. Assume that one of the following conditions holds: 
\begin{enumerate}[\quad \rm(i)]
\item $\chara k=0$;
\item $I$ and $J$ are monomial ideals;
\item $I^t$ and $J^t$ are Koszul for all $1\le t\le s$.
\end{enumerate}
Then there is an equality
\[
\reg F^s = \max_{i\in [1,s]} \bigl\{\reg (\mm^{s-i}I^i)+s-i,\reg (\nn^{s-i}J^i)+s-i\bigr\}.
\]
\end{thm}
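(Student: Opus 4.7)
The plan is to iterate Betti splittings in two nested stages. For any Betti splitting $P = A + B$ with $A \cap B = K$, Lemma \ref{lem_criterion_Bettisplit} yields $\reg P = \max\{\reg A,\, \reg B,\, \reg K - 1\}$, and this is the workhorse throughout.

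Stage 1 applies Proposition \ref{prop_Betti-splitting_power} directly: it supplies the chain of Betti splittings $H^s = G_0 \subseteq G_1 \subseteq \cdots \subseteq G_s = F^s$ with $G_{t-1}\cap (\mm\nn)^{s-t}J^t = \mm^{s-t+1}\nn^{s-t}J^t$. By Lemma \ref{lem_tensor}, both $\reg((\mm\nn)^{s-t}J^t)$ and $\reg(\mm^{s-t+1}\nn^{s-t}J^t) - 1$ equal $(s-t) + \reg(\nn^{s-t}J^t)$, so the Betti-splitting regularity formula iterates to
\[
\reg F^s = \max\Bigl\{\reg H^s,\; \max_{1 \le i \le s}\bigl(\reg(\nn^{s-i}J^i) + s - i\bigr)\Bigr\}.
\]

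Stage 2 computes $\reg H^s$ via a parallel inner chain $N_0 = I^s \subseteq N_1 \subseteq \cdots \subseteq N_s = H^s$, where $N_t = \sum_{j=0}^t I^{s-j}(\mm\nn)^j$ and $N_t = N_{t-1} + I^{s-t}(\mm\nn)^t$. The hypothesis $I\subseteq \mm^2$ gives the ascending chain $\mm^j I^{s-j}\subseteq \mm^{j+1}I^{s-j-1}$, which collapses the partial sums to $\sum_{j=0}^{t-1}\mm^j I^{s-j} = \mm^{t-1}I^{s-t+1}$. Combined with the bigrading of $T = R\otimes_k S$ and the flat intersection rule $(A \cdot B)\cap (C \cdot D) = (A\cap C)(B\cap D)$ for $A,C\subseteq R$ and $B,D\subseteq S$ (obtained by iterating Lemma \ref{lem_intersect} and the flatness of $T$ over $R$ and over $S$), a $y$-degree analysis pins down the key intersection
\[
N_{t-1}\cap I^{s-t}(\mm\nn)^t = (\mm^{t-1}I^{s-t+1})\nn^t.
\]
For this decomposition to be a Betti splitting, Lemma \ref{lem_criterion_Bettisplit} demands two Tor-vanishings. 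After peeling off the fixed $S$-factor $\nn^t$ via K\"unneth, the map into $I^{s-t}(\mm\nn)^t$ reduces to $\mm^{t-1}I^{s-t+1}\to \mm^t I^{s-t}$, which is the exact content of Lemma \ref{lem_Torvanishing} applied to $I$. The map into $N_{t-1}$ factors through the $j = t-1$ summand $(\mm^{t-1}I^{s-t+1})\nn^{t-1}$ and, after peeling off the $R$-factor, reduces to the Koszul inclusion $\nn^t\hookrightarrow\nn^{t-1}$, which is Tor-vanishing by Lemma \ref{lem_maxideal_Koszul}. Iterating the Betti-splitting regularity formula, with Lemma \ref{lem_tensor} evaluating the tensor-product regularities, yields $\reg H^s = \max_{0\le i\le s}\{\reg(\mm^{s-i}I^i) + s - i\}$; since $\mm^{s-i}I^i$ is generated in degrees at least $s + i$, the $i = 0$ term equals $2s$ and is absorbed by the $i = 1$ term, leaving $\reg H^s = \max_{1\le i\le s}\{\reg(\mm^{s-i}I^i) + s - i\}$. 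Substituting into the output of Stage 1 produces the theorem.

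The main technical hurdle lies in Stage 2, specifically in pinning down the intersection $N_{t-1}\cap I^{s-t}(\mm\nn)^t$ by the bigrading and ascending-chain analysis, and in establishing the two Tor-vanishings that make the inner decomposition a Betti splitting. Stage 1 is essentially a direct packaging of Proposition \ref{prop_Betti-splitting_power}; its only subtlety is that the base case $\reg G_0 = \reg H^s$ falls outside the scope of that proposition, making the independent inner-chain argument of Stage 2 unavoidable.
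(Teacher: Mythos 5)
Your proof is correct, and Stage 1 coincides with the paper's argument. For Stage 2, though, you take a genuinely different route. The paper's device is to observe that $H=I+\mm\nn$ is \emph{itself} the fiber product of $I$ and the zero ideal of $S$ (with the roles of $(R,\mm,I)$ and $(S,\nn,J)$ swapped), so Proposition~\ref{prop_Betti-splitting_power} applies a second time, directly yielding $\reg H^s=\max\{\reg (\mm\nn)^s,\ \reg(\mm^{s-t}I^t)+s-t : 1\le t\le s\}$ with no further work. You instead build the mirror-image chain $N_t=\sum_{j=0}^t I^{s-j}(\mm\nn)^j$ running from $I^s$ up to $H^s$, and you must then re-establish the package of the Proposition by hand: pinning down $N_{t-1}\cap I^{s-t}(\mm\nn)^t=(\mm^{t-1}I^{s-t+1})\nn^t$ via the bigrading of $T$ (which you do correctly — the $S$-degree-$d$ slice collapses the partial sums and Lemma~\ref{lem_intersect} finishes), and verifying the two Tor-vanishings via Lemma~\ref{lem_Torvanishing} and the Koszulness of $\nn$. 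All of this is sound and buys you a perhaps more concrete ``from scratch'' picture of where the inner Betti splittings come from, at the cost of redoing the intersection and Tor-vanishing analysis that the Proposition already packages. Your closing remark, that the inner-chain argument is ``unavoidable,'' is the one point I would flag as wrong: the paper avoids it precisely by exploiting the recursive fiber-product structure of $H$, and recognizing that recursion is the slicker move. (Also note that your Betti-splitting identity $\reg P=\max\{\reg A,\reg B,\reg K-1\}$ and the paper's use of the induced short exact Tor sequence are interchangeable here because Lemma~\ref{lem_tensor} forces the two candidate terms $\reg B$ and $\reg K-1$ to coincide.)
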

\begin{proof}
We apply Proposition \ref{prop_Betti-splitting_power} and employ the notation there. For each $1\le t\le s$, we have $G_t=G_{t-1}+(\mm\nn)^{s-t}J^t$ and $G_{t-1}\cap (\mm\nn)^{s-t}J^t=\mm^{s-t+1}\nn^{s-t}J^t$. Thus we get an induced exact sequence
\[
0\to G_{t-1} \to G_t \to \frac{(\mm\nn)^{s-t}J^t}{\mm^{s-t+1}\nn^{s-t}J^t} \to 0.
\] 
By Proposition \ref{prop_Betti-splitting_power} and Lemma \ref{lem_criterion_Bettisplit}, for all $i$, the map $\Tor^T_i(k,G_{t-1})\to \Tor^T_i(k,G_t)$ is injective. This yields an exact sequence
$$
0 \to \Tor^T_i(k,G_{t-1}) \to \Tor^T_i(k,G_t) \to \Tor^T_i\left(k,\frac{(\mm\nn)^{s-t}J^t}{\mm^{s-t+1}\nn^{s-t}J^t}\right) \to 0.
$$
Hence 
\begin{align*}
\reg G_t &= \max\left\{\reg G_{t-1}, \reg \frac{(\mm\nn)^{s-t}J^t}{\mm^{s-t+1}\nn^{s-t}J^t}\right\} \\
         &= \max\left\{\reg G_{t-1}, \reg \left(\frac{\mm^{s-t}}{\mm^{s-t+1}}\right)+\reg (\nn^{s-t}J^t)\right\}\\
         &= \max \bigl\{\reg G_{t-1}, \reg(\nn^{s-t}J^t)+s-t \bigr\}.
\end{align*}
The second equality follows from Lemma \ref{lem_tensor}. Now as $G_s=F^s$ and $G_0=H^s$, we conclude that
\[
\reg F^s = \max\{\reg H^s, \reg (\nn^{s-t}J^t)+s-t: 1\le t\le s\}.
\]
Now $H=I+\mm\nn$ can be viewed as the fiber product of $I$ and $(0)\subseteq S$. Arguing as above, we get
\begin{align*}
\reg H^s &= \max\{\reg (\mm\nn)^s, \reg (\mm^{s-t}I^t)+s-t: 1\le t\le s\}\\
            &=\max\{2s,\reg (\mm^{s-t}I^t)+s-t: 1\le t\le s\} \\
            & = \max\{\reg (\mm^{s-t}I^t)+s-t: 1\le t\le s\}.
\end{align*}
The last equality holds since $\reg(I^s)\ge 2s$. Putting everything together, we get 
\[
\reg F^s = \max\{\reg (\mm^{s-t}I^t)+s-t, \reg (\nn^{s-t}J^t)+s-t: 1\le t\le s\}.
\]
The proof is concluded.
\end{proof}
If each of $I$ and $J$ is generated by forms of the same degree, then the formula in Theorem \ref{thm_reg_fiberprod_polynomialrings} can be simplified.
\begin{cor}
\label{cor_reg_equigenerated}
Keep using Notation \ref{notn_RandS}. Assume that either $\chara k=0$ or $I$ and $J$ are monomial ideals. Assume further that each of $I$ and $J$ is generated by forms of the same degree. Then for all $s\ge 1$, there is an equality
\[
\reg F^s=\max_{i\in [1,s]}\bigl\{\reg I^i+s-i, \reg J^i+s-i\bigr\}.
\]
\end{cor}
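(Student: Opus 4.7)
\emph{Proof outline.} The plan is to derive the corollary from Theorem~\ref{thm_reg_fiberprod_polynomialrings} by showing that, under the equigenerated hypothesis, the two maxima in the corollary and in the theorem coincide. The theorem provides
\[
\reg F^s = \max_{i \in [1,s]}\bigl\{\reg(\mm^{s-i}I^i) + s - i,\ \reg(\nn^{s-i}J^i) + s - i\bigr\},
\]
so it suffices to verify
\[
\max_{i \in [1,s]}\{\reg(\mm^{s-i}I^i) + s - i\} = \max_{i \in [1,s]}\{\reg I^i + s - i\},
\]
together with the symmetric identity for $J$.

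For the inequality $\ge$, I would iterate the following observation, which applies to any ideal $K \subseteq R$ equigenerated in some degree $d'$. From the short exact sequence $0 \to \mm K \to K \to K/\mm K \to 0$ and the fact that $K/\mm K$ is a direct sum of copies of $k(-d')$, the long exact sequence of $\Tor^R(k,-)$ gives $\reg K \le \max\{\reg \mm K, d'\} = \reg \mm K$, the last equality holding because $\reg \mm K \ge d' + 1$. Since $\mm K$ is itself equigenerated (in degree $d' + 1$), iterating this produces the chain $\reg I^i \le \reg(\mm I^i) \le \cdots \le \reg(\mm^{s-i}I^i)$, and adding $s - i$ yields the $\ge$ direction termwise.

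For the inequality $\le$, the central ingredient is the bound
\[
\reg(\mm^j K) \le \max\{\reg K,\ d' + j\}
\]
for any ideal $K$ equigenerated in degree $d'$, proved by induction on $j$. The base case $j = 1$ uses the same short exact sequence: since $\Tor_{\ell+1}^R(k, K/\mm K)$ is concentrated in internal degree $(\ell + 1) + d'$, its contribution to $\reg \mm K$ is at most $d' + 1$, yielding $\reg \mm K \le \max\{\reg K, d' + 1\}$. The inductive step applies the base case to $\mm^{j-1}K$, which is equigenerated in degree $d' + j - 1$. Taking $K = I^i$ (equigenerated in degree $di$, where $d \ge 2$ is the common generating degree of $I$) yields
\[
\reg(\mm^{s-i}I^i) + (s-i) \le \max\bigl\{\reg I^i + (s-i),\ di + 2(s-i)\bigr\}.
\]

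To conclude, the hypothesis $I \subseteq \mm^2$ forces $d \ge 2$, hence $di + 2(s-i) \le ds$. Because $I^s$ is generated in degree $ds$, one has $\reg I^s \ge ds$, so the right-hand side is dominated by $\max\{\reg I^i + (s-i), \reg I^s\}$, and both summands appear in $\max_{t \in [1, s]}\{\reg I^t + (s-t)\}$ (the second as the $t = s$ term). Taking the maximum over $i$ delivers the $\le$ direction and completes the proof. I expect the only delicate point to be the inductive bound $\reg(\mm^j K) \le \max\{\reg K, d' + j\}$; the equigenerated hypothesis is what forces $K/\mm K$ to sit in a single internal degree, on which the entire argument hinges.
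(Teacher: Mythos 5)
Your proposal is correct and follows essentially the same route as the paper: apply Theorem~\ref{thm_reg_fiberprod_polynomialrings} and then show the two maxima agree under the equigenerated hypothesis, via a bound on $\reg(\mm^j K)$ for equigenerated $K$ combined with the observation that $\reg I^s \ge ds$ absorbs the linear term $di+2(s-i)$. The only difference is cosmetic: the paper invokes Lemma~\ref{lem_maxideal_reg}, whose proof uses local cohomology to obtain the exact equality $\reg(\mm^j K)=\max\{\reg K,\,d'+j\}$, whereas you derive the two one-sided inequalities you actually need directly from the short exact sequence $0\to\mm K\to K\to K/\mm K\to 0$ and the concentration of $\Tor(k,K/\mm K)$ in a single internal degree; both rest on the same observation that $K/\mm K$ is a shifted copy of $k$ when $K$ is equigenerated.
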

To prove the corollary, first we describe the effect on the regularity of a module after multiplying it with the graded maximal ideal. Let $(R,\mm)$ be a standard graded $k$-algebra. For a finitely generated graded $R$-module $M$, denote $t_0(M)=\sup\{i: (M/\mm M)_i \neq 0\}$, the maximal degree of a minimal homogeneous generator of $M$. The following lemma essentially follows from the proof of Eisenbud--Ulrich's \cite[Proposition 1.4]{EU}. For the sake of clarity, we present the detailed argument. 
\begin{lem}[Eisenbud--Ulrich]
\label{lem_maxideal_reg}
Let $(R,\mm)$ be a standard graded $k$-algebra. Let $M\neq 0$ be a finitely generated graded $R$-modules such that $\depth M\ge 1$. 
\begin{enumerate}[\quad \rm (i)]
\item For every $i\ge 1$, there is an equality
\[
\reg (\mm^iM)=\max\biggl\{\reg M, \reg \frac{M}{\mm^iM}+1 \biggr\}.
\]
In particular, setting $i=1$, we get 
\[
\reg(\mm M) =\max\{\reg M, 1+t_0(M)\}.
\]
\item Assume further that $M$ is generated in a single degree. Then for all $i\ge 1$,
\[
\reg(\mm^i M) =\max\{\reg M, i+t_0(M)\}.
\]
In particular, let $Q$ be a standard graded polynomial ring over $k$ and $Q\to R$ be a surjection of graded $k$-algebras, then for all $i\ge \reg M-t_0(M)$, $\mm^iM$ has a linear resolution as a $Q$-module.
\end{enumerate}

\end{lem}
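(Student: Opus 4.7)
The plan is to start from the short exact sequence
\[
0 \to \mm^i M \to M \to M/\mm^i M \to 0,
\]
and analyze it via local cohomology with respect to $\mm$. Because $\mm^i$ annihilates $M/\mm^i M$, this quotient is a graded module of finite length, so $H^0_\mm(M/\mm^i M) = M/\mm^i M$ while $H^j_\mm(M/\mm^i M) = 0$ for all $j \ge 1$. The hypothesis $\depth M \ge 1$ means $H^0_\mm(M) = 0$, so the associated long exact sequence collapses to $H^0_\mm(\mm^i M) = 0$, a short exact sequence
\[
0 \to M/\mm^i M \to H^1_\mm(\mm^i M) \to H^1_\mm(M) \to 0,
\]
and isomorphisms $H^j_\mm(\mm^i M) \cong H^j_\mm(M)$ for all $j \ge 2$.

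For part (i), I would invoke the local cohomology formula $\reg N = \max_j\{a^j(N) + j\}$, where $a^j(N)$ denotes the supremum of degrees in which $H^j_\mm(N)$ is non-zero. Comparing ends term by term, only the $j=1$ contribution differs between $\mm^i M$ and $M$: it picks up the end of the new summand $M/\mm^i M$. This yields
\[
\reg(\mm^i M) = \max\bigl\{\reg M,\; a^0(M/\mm^i M) + 1\bigr\} = \max\bigl\{\reg M,\; \reg(M/\mm^i M) + 1\bigr\}.
\]
For the special case $i=1$, the quotient $M/\mm M$ is supported exactly in the degrees of the minimal generators of $M$, so $\reg(M/\mm M) = t_0(M)$, which gives the displayed formula.

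For part (ii), the single-degree hypothesis forces each $\mm^j M/\mm^{j+1} M$ to sit in degree $t_0(M) + j$, so $M/\mm^i M$ is concentrated in degrees $t_0(M), \ldots, t_0(M) + i - 1$; hence $\reg(M/\mm^i M) = t_0(M) + i - 1$, and the formula from (i) yields the claimed equality $\reg(\mm^i M) = \max\{\reg M,\; t_0(M) + i\}$. For the final assertion, $\mm^i M$ is generated in degree $t_0(M) + i$; when $i \ge \reg M - t_0(M)$ the formula gives $\reg(\mm^i M) = t_0(M) + i$. Since local cohomology with respect to the graded maximal ideal is insensitive to the surjection $Q \to R$, we have $\reg_Q(\mm^i M) = \reg(\mm^i M)$, which equals the generating degree of $\mm^i M$, so $\mm^i M$ has a linear resolution over $Q$.

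The main obstacle, such as it is, is keeping the end-degrees straight and recognizing that $\depth M \ge 1$ is exactly the hypothesis that confines the change in local cohomology from $M$ to $\mm^i M$ to cohomological degree one, which is what makes the chain of equalities collapse cleanly. Once that observation is isolated, the remaining content is essentially bookkeeping with the Castelnuovo--Mumford formula.
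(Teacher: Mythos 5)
Your argument is correct and follows essentially the same route as the paper: the short exact sequence $0 \to \mm^i M \to M \to M/\mm^i M \to 0$, the fact that $\depth M \ge 1$ kills $H^0_\mm(M)$, the induced exact sequence of local cohomology identifying $M/\mm^i M$ as a submodule of $H^1_\mm(\mm^i M)$ with $H^\ell_\mm(\mm^i M) \cong H^\ell_\mm(M)$ for $\ell \ge 2$, and then the formula $\reg N = \max_j\{a^j(N)+j\}$. The paper phrases the conclusion of (i) through the two lower bounds $\reg(\mm^i M) \ge \reg M$ and $\reg(\mm^i M) \ge \reg(M/\mm^i M)+1$ together with the implicit upper bound from the exact sequence, whereas you compare the $a^j$'s term by term, but the content is identical; parts (ii) and the final claim are handled the same way in both.
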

\begin{proof}
(i) Let $r\in \mm$ be an $M$-regular element. Then $r^iM\neq 0$ for all $i\ge 1$. Hence $\mm^iM\neq 0$ for all $i\ge 1$. Consider the exact sequence
\[
0\to \mm^iM\to M \to \frac{M}{\mm^iM}\to 0.
\]
Since $H^0_\mm(M)=0$, we get an exact sequence
\[
0\to H^0_\mm(M/\mm^iM)=M/\mm^iM \to H^1_\mm(\mm^iM) \to H^1_\mm(M)\to 0,
\]
and for all $\ell \ge 2$, we have $H^\ell_\mm(\mm^iM) \cong H^\ell_\mm(M)$. This implies that
\begin{align*}
\reg (\mm^iM) &\ge \max\{u+1: H^1_\mm(\mm^iM)_u\neq 0\} \ge \reg (M/\mm^iM)+1,\\
\reg (\mm^iM) &\ge \reg M.
\end{align*}
Therefore 
\[
\reg (\mm^iM)=\max\biggl\{\reg M, \reg \frac{M}{\mm^iM}+1 \biggr\}.
\]
Setting $i=1$, we get the remaining assertion.

(ii) If $M$ is generated in a single degree, then clearly 
$$
\reg (M/\mm^iM)=\max\{t: (M/\mm^iM)_t\neq 0\}=i+t_0(M)-1.
$$ Hence by part (i), for all $i\ge 1$,
\[
\reg (\mm^iM)=\max\bigl\{\reg M, i+t_0(M) \bigr\}.
\]
The last assertion follows easily.
\end{proof}
\begin{rem}
\label{rem_Sega}
By the graded analog of a result due to \c{S}ega \cite[Theorem 3.2(1)]{Se1}, we have the following interesting result, which coincides with Lemma \ref{lem_maxideal_reg}(ii) when $R$ is regular: Let $(R,\mm)$ be a Koszul algebra, and $M$ a finitely generated graded $R$-module, which is generated in a single degree. Then for all $i\ge 0$, there is an equality
\[
\reg_R (\mm^iM)=\max\{\reg_R M,i+t_0(M)\}.
\]
Note that {\it ibid.} deals with local rings and what is written as $\reg_R(M)$ there really means $\reg_{\gr_\mm R} \left(\gr_\mm(M)\right)$ in our situation. 

It is known that if $R$ is a Koszul algebra, then for all finitely generated graded $R$-module $M$, the inequality $\reg_R M\le \reg M$ holds \cite{AE}, and strict inequality may happen.
\end{rem}

\begin{rem}
For $i\ge 2$, the equality $\reg(\mm^iM) =\max\{\reg M, i+t_0(M)\}$ in Lemma \ref{lem_maxideal_reg}(ii) does not hold in general, when $M$ is not generated in a single degree. For example, let $I=(a^3,ab^2,ac^2,a^2bc)\subseteq R=k[a,b,c]$ and $i=2$. Then 
$\reg(\mm^2I)=5< \max\{\reg I, 2+t_0(I)\}=6$.
\end{rem}
\begin{proof}[Proof of Corollary \ref{cor_reg_equigenerated}]
In view of Theorem \ref{thm_reg_fiberprod_polynomialrings}, it suffices to show that
\[
\max_{i\in [1,s]}\{\reg(\mm^{s-i}I^i)+s-i\} = \max_{i\in [1,s]}\{\reg I^i+s-i\}.
\]
(The same equality holds for $J$.) Assume that $I$ is generated by forms of degree $p\ge 2$. By Lemma \ref{lem_maxideal_reg}, $\reg(\mm^{s-i}I^i)=\max\{\reg I^i,s+(p-1)i\}$. Hence
\[
\max_{i\in [1,s]}\{\reg(\mm^{s-i}I^i)+s-i\} =\max_{i\in [1,s]}\{\reg I^i+s-i,2s+(p-2)i\}.
\]
It remains to notice that for $1\le i\le s$, $2s+(p-2)i\le ps \le \reg I^s$.
\end{proof}
\begin{rem}
\label{rem_reg_non-equigenerated}
In view of Theorem \ref{thm_reg_fiberprod_polynomialrings}, Lemma \ref{lem_maxideal_reg}, we see that if either $\chara k=0$ or $I$ and $J$ are monomial ideals, then
\[
\reg F^s \ge \max_{i\in [1,s]}\{\reg I^i+s-i, \reg J^i+s-i\}.
\]
One may ask whether the equality always happens, as in Corollary \ref{cor_reg_equigenerated}. The answer is "No". Choose $R=k[a,b,c]$, $I=(a^4,a^3b,ab^3,b^4,a^2b^2c^4)$, $S=k[x], J=(x^4)$. We have $\reg I=\reg I^2=8$, $\reg (\mm I)=9$. The fiber product $F$ satisfies
\[
\reg F^2=\reg (\mm I)+1=10  > \max_{i\in [1,2]}\{\reg I^i+2-i,\reg J^i+2-i\}=9.
\]
\end{rem}
By results of Cutkosky-Herzog-Trung \cite{CHT} and Kodiyalam \cite{Kod}, there exist constants $p,q$ such that $\reg I^s=ps+q$ for all $s\gg 0$. Denote by $\rstab(I)$ the minimal positive integer $m$ such that for all $s\ge m$, $\reg I^s=ps+q$. In many interesting situations, we have a simple asymptotic formula for $\reg F^s$. 
\begin{cor}
\label{cor_asymptotic_reg}
Keep using Notation \ref{notn_RandS}. Assume that either $\chara k=0$ or $I$ and $J$ are monomial ideals. Assume further that both $I$ and $J$ satisfy one of the following conditions: 
\begin{enumerate}[\quad \rm (i)]
\item All the minimal homogeneous generators have degree $2$;
\item All the minimal homogeneous generators have degree at least $3$;
\item The subideal generated by elements of degree 2 is integrally closed, e.g. $I$ and $J$ are squarefree monomial ideals.
\end{enumerate}
Then for all $s\gg 0$, there is an equality $\reg F^s=\max\{\reg I^s,\reg J^s\}$.
\end{cor}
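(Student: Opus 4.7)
The natural starting point is Theorem \ref{thm_reg_fiberprod_polynomialrings}, which gives the exact formula
\[
\reg F^s = \max_{i \in [1,s]}\bigl\{\reg(\mm^{s-i}I^i) + s - i,\ \reg(\nn^{s-i}J^i) + s - i\bigr\}.
\]
Setting $i = s$ on both sides yields the easy direction $\reg F^s \ge \max\{\reg I^s, \reg J^s\}$ immediately. It remains to show, for $s \gg 0$ and $1 \le i \le s-1$, that $\reg(\mm^{s-i}I^i) + s - i \le \reg I^s$ (and the analogous bound for $J$).

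To handle this, the plan is to invoke Lemma \ref{lem_maxideal_reg}(i) (noting that $\depth I^i \ge 1$) to split
\[
\reg(\mm^{s-i}I^i) = \max\bigl\{\reg I^i,\ \reg(I^i/\mm^{s-i}I^i) + 1\bigr\}.
\]
The first branch contributes $\reg I^i + s - i$. By Cutkosky--Herzog--Trung and Kodiyalam, $\reg I^s = ps + q$ for $s \gg 0$; since $I \subseteq \mm^2$ forces $\reg I^s \ge t_0(I^s) \ge 2s$, one gets $p \ge 2$. A routine comparison then bounds $\reg I^i + (s-i) \le \reg I^s$ for $s \gg 0$ uniformly in $1 \le i \le s$: for $i \ge \rstab(I)$ use the identity $\reg I^i + (s-i) = (p-1)i + q + s$ together with $(p-1)(s-i) \ge 0$, and for small $i$ use boundedness of $\reg I^i$ together with $\reg I^s \to \infty$. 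The second branch is bounded by $t_0(I^i) + 2(s-i)$, since the finite length quotient $I^i/\mm^{s-i}I^i$ is concentrated in degrees at most $t_0(I^i) + s - i - 1$.

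The remaining task is to show $t_0(I^i) + 2(s-i) \le \reg I^s$ for $s \gg 0$, and this is exactly where the three hypotheses intervene. In case (i) the bound is immediate: $t_0(I^i) = 2i$, so the quantity equals $2s \le \reg I^s$. In case (ii) the condition $d_{\min}(I) \ge 3$ forces $\reg I^s \ge 3s$ and, by inspecting the maximal-degree minimal generators of $I^s$, the asymptotic slope satisfies $p \ge d_{\max}(I)$; combined with the crude estimate $t_0(I^i) \le i \cdot d_{\max}(I)$, this closes the argument. Case (iii) is the most delicate: the integral-closure hypothesis on the degree-$2$ subideal must be leveraged to prevent the pathology of Remark \ref{rem_counterexample_reg} and to guarantee that the asymptotic slope $p$ dominates $d_{\max}(I)$. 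Extracting this asymptotic information from the integrally closed condition is the main technical obstacle; the squarefree monomial subcase serves as a guiding example, where each minimal generator $f$ of $I$ still gives a minimal generator $f^s$ of $I^s$, so that $t_0(I^s) = s \cdot d_{\max}(I)$ and hence $p \ge d_{\max}(I)$ follow automatically from Lemma \ref{lem_maxideal_reg}.
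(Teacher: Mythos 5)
Your overall reduction is the right one: by Theorem \ref{thm_reg_fiberprod_polynomialrings} it suffices to show $\reg(\mm^{s-i}I^i)+s-i\le\reg I^s$ (and the $J$-analogue) for all $i\in[1,s]$ when $s\gg 0$, and your split of $\reg(\mm^{s-i}I^i)$ via Lemma \ref{lem_maxideal_reg}(i) into the two branches $\reg I^i$ and $\reg(I^i/\mm^{s-i}I^i)+1$ is sound. Case (i) closes cleanly as you say. However, your treatment of cases (ii) and (iii) rests on the claim that the Kodiyalam slope $p$ satisfies $p\ge d_{\max}(I)$, and this is false. For instance $I=(a^3,b^3,a^2b^2c^{100})\subseteq k[a,b,c]$ has all generators of degree $\ge 3$, yet $a^2b^2c^{100}$ is integral over $(a^3,b^3)$ (since $(a^2b^2c^{100})^3=(a^3)^2(b^3)^2\cdot b^2c^{300}\in(a^3,b^3)^3$), so $I\subseteq\overline{I_{\le 3}}$ and $p=3$ while $d_{\max}(I)=104$. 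With that, your upper bound $t_0(I^i)+2(s-i)\le i\,d_{\max}(I)+2(s-i)$ can reach $s\,d_{\max}(I)$ at $i=s$, which blows past $\reg I^s=ps+q$. For case (iii) you explicitly flag that you cannot extract the needed asymptotic information, so the argument is incomplete there as well.

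The missing idea is that you do not need $p\ge d_{\max}(I)$ at all. The paper's observation is dichotomous: under any of (i)--(iii), \emph{either} $p\ge 3$, \emph{or} $p=2$ and $I$ is generated in degree $2$. The second alternative is where hypothesis (iii) does its work: if $p=2$ then $I\subseteq\overline{I_{\le 2}}$, and since $I_{\le 2}$ is assumed integrally closed this forces $I=I_{\le 2}$, i.e.\ $I$ is generated in degree $2$ and case (iii) collapses into case (i). For the second branch of your estimate, the right bound is $t_0(I^i)\le\reg I^i$ (not $i\,d_{\max}$), together with a split of the index range: for $i\ge\rstab(I)$ use $\reg I^i=pi+q$ so $t_0(I^i)+2(s-i)\le(p-2)i+2s+q\le ps+q$; for the finitely many $i<\rstab(I)$, when $p\ge 3$ take $s$ large, and when $p=2$ use that $I$ is generated in degree $2$ (giving $t_0(I^i)=2i$ and a bound of exactly $2s$). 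This is precisely the route in the paper, and it avoids any comparison between $p$ and $d_{\max}(I)$, which can indeed fail.
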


\begin{proof}
It suffices to show that with the extra hypothesis, we have for all $s\gg 0$ the equality
\[
\reg I^s=\max_{i\in [1,s]} \{\reg (\mm^{s-i}I^i)+s-i\}.
\]
(And the same equality holds for $J$.)

Let $p$ be the minimal number such that $I$ is contained in the integral closure of $I_{\le p}=(x\in I: \deg x\le p)$. By Kodiyalam's \cite[Theorem 5]{Kod}, there exist integral constants $d\ge 1$ and $q\ge 0$ such that $\reg I^n=pn+q$ for all $n\ge d$. By the hypothesis, either $p\ge 3$, or $p=2$ and $I$ is generated in degree $2$. Indeed, this is clear if $I$ satisfies either (i) or (ii). Assume that $I$ satisfies (iii). If $p\ge 3$, we are done.  If $p=2$, by the hypothesis $I_{\le 2}$ is integrally closed, so $I=I_{\le 2}$, namely $I$ is generated in degree $2$.

If $I$ is generated in a single degree, by Lemma \ref{lem_maxideal_reg}, there exists an integer $N$ such that for every $1\le i\le d-1$, $\mm^nI^i$ has a linear resolution for all $n\ge N$.

Take $s\ge \max\{N+d-1,\reg I^i-2i: 1\le i\le d\}$. We show that 
$$
\reg I^s=\max_{i\in [1,s]} \{\reg (\mm^{s-i}I^i)+s-i\}.
$$

For $d\le i\le s$, using successively the exact sequence
\[
0\to \mm M \to M\to M/\mm M\to 0,
\]
we get $\reg(\mm^{s-i}I^i) \le s-i+\reg I^i$. Hence 
$$
\reg(\mm^{s-i}I^i)+s-i\le 2s+\reg I^i-2i=2s+(p-2)i+q\le  ps+q=\reg I^s.
$$ 

For $1\le i\le d-1$, consider first the case $p\ge 3$. As above, we get $\reg(\mm^{s-i}I^i)+s-i\le 2s+\reg I^i-2i \le ps$ for all $1\le i\le d-1$, thanks to the fact that $p\ge 3$ and the choice of $s$. Hence we are done in this case.

Now assume that $p=2$ and $I$ is generated in degree $2$. For $1\le i\le d-1$, we have $s-i\ge s-d+1\ge N$. Hence $\mm^{s-i}I^i$ has a linear resolution. The ideal $\mm^{s-i}I^i$ is generated in degree $s+i$, so
\[
\reg(\mm^{s-i}I^i)+s-i=2s \le \reg I^s,
\] 
as desired. This concludes the proof.
\end{proof}
\begin{rem}
\label{rem_counterexample_reg}
The following example shows that the conclusion of Corollary \ref{cor_asymptotic_reg} is not true for arbitrary $I$ and $J$, even if both of them are monomial ideals.
Consider $R=\Q[a,b,c,d,x,y,z,t], \mm=R_+$ and
\[
I=(a^2,b^2,c^2,d^2,abx,cdx,acy,bdy,adz,bcz,cdyzt).
\]
Consider $S=\Q[u],\nn=(u)$ and $J=(u^2)$. It can be shown that for all $s\ge 3$,
\[
\reg I^s = 2s+3 < 2s+4 = \reg (\mm^{s-2}I^2)+s-2.
\]
Since the proof is long and technical (about seven pages), we defer the detailed argument to Theorem \ref{thm_counterexample} in Appendix \ref{appendix}.

Let $F=I+J+\mm \nn$ be the fiber product of $I$ and $J$. Then from Theorem \ref{thm_reg_fiberprod_polynomialrings}, for all $s\ge 3$,
\[
\reg F^s \ge \reg (\mm^{s-2}I^2)+s-2 =2s+4> 2s+3=\max \{\reg I^s,\reg J^s\}.
\]
\end{rem}
If moreover, $I$ and $J$ are both generated in a common degree, we can give a sharp bound for $\rstab(F)$ in terms of the data of $I$ and $J$.
\begin{cor}
\label{cor_rstab}
Keep using Notation \ref{notn_RandS}. Assume that either $\chara k=0$ or $I$ and $J$ are monomial ideals. Assume further that both $I$ and $J$ are generated in the same degree $p\ge 2$. Denote $g=\max\{\reg I^i-i,\reg J^j-j: 1\le i\le \rstab(I)-1, 1\le j\le \rstab(J)-1\}$. Denote $q_1=\reg I^r-pr$, $q_2=\reg J^r-pr$ where $r=\max\{\rstab(I),\rstab(J)\}$. Then for all $s\ge \max\{\rstab(I),\rstab(J),\lceil g/(p-1)\rceil\}$, there is an equality
\[
\reg F^s=ps+\max\{q_1,q_2\}.
\]
In particular, $\rstab(F) \le \max\{\rstab(I),\rstab(J),\lceil g/(p-1)\rceil\}$.
\end{cor}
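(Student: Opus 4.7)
The plan is to invoke Corollary \ref{cor_reg_equigenerated}, which expresses $\reg F^s$ as the maximum over $i\in[1,s]$ of $\reg I^i+s-i$ and $\reg J^i+s-i$, and then to carry out a case split on $i$ according to whether $i$ lies in the ``asymptotic range'' $i\ge \rstab(I)$ (respectively $i\ge \rstab(J)$) or strictly below it. I would handle the $I$-side and $J$-side completely symmetrically.

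For the asymptotic range I would use that, since $I$ is generated in a single degree $p$, one has $\reg I^i=pi+q_1$ for $i\ge \rstab(I)$, so $\reg I^i+s-i=(p-1)i+s+q_1$ is nondecreasing in $i$ (as $p\ge 2$); its maximum over $\rstab(I)\le i\le s$ is therefore attained at $i=s$ and equals $ps+q_1$. The symmetric computation for $J$ produces $ps+q_2$. For the subasymptotic range $1\le i\le \rstab(I)-1$, I would invoke the definition of $g$ to obtain $\reg I^i-i\le g$, hence $\reg I^i+s-i\le s+g$, and then use the hypothesis $s\ge \lceil g/(p-1)\rceil$ to conclude $s+g\le ps\le ps+q_1$. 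The last inequality uses $q_1\ge 0$, which is forced by $\reg I^r\ge pr$ (since $I^r$ is generated in degree $pr$). The same reasoning handles $1\le i\le \rstab(J)-1$.

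Combining the two cases gives the upper bound $\reg F^s\le ps+\max\{q_1,q_2\}$, and the reverse inequality is obtained by simply evaluating the maximum at $i=s$, where $\reg I^s+s-s=ps+q_1$ and $\reg J^s=ps+q_2$; these are valid because $s\ge r=\max\{\rstab(I),\rstab(J)\}$ puts us inside the linear regime for both ideals. The bound $\rstab(F)\le \max\{\rstab(I),\rstab(J),\lceil g/(p-1)\rceil\}$ then drops out at once, since we have exhibited $\reg F^s$ as the linear function $ps+\max\{q_1,q_2\}$ of $s$ for all $s$ at or above the stated threshold.

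I do not expect a serious obstacle; the proof is essentially a careful unpacking of Corollary \ref{cor_reg_equigenerated}. The only delicate point is ensuring that contributions from the subasymptotic range cannot dominate the maximum, and this is precisely what the $\lceil g/(p-1)\rceil$ threshold is engineered to enforce. Mild edge cases such as $\rstab(I)=1$ or $\rstab(J)=1$, where the corresponding subasymptotic index set is empty and $g$ is a maximum over fewer terms, present no difficulty since the subasymptotic step is then vacuous on that side.
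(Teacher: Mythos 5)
Your argument is correct and follows essentially the same route as the paper's: both start from Corollary \ref{cor_reg_equigenerated}, split the index range at $\rstab(I)$ (resp. $\rstab(J)$), use the definition of $g$ together with $s\ge\lceil g/(p-1)\rceil$ to kill the subasymptotic contributions, and use linearity $\reg I^i=pi+q_1$ for $i\ge\rstab(I)$ to see the maximum is attained at $i=s$. The only cosmetic difference is that the paper phrases the conclusion as $\reg I^s=\max_{i\in[1,s]}\{\reg I^i+s-i\}$ and then substitutes $\reg I^s=ps+q_1$, whereas you directly compare every term to $ps+\max\{q_1,q_2\}$; mathematically these are the same.
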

\begin{proof}
By Corollary \ref{cor_reg_equigenerated}, 
\[
\reg F^s=\max_{i\in [1,s]}\{\reg I^i+s-i,\reg J^i+s-i\}.
\]
Denote $N=\max\{\rstab(I),\rstab(J),\lceil g/(p-1)\rceil\}$. It suffices to show that for all $s\ge N$,
\begin{equation}
\label{eq_regIs=max}
\reg I^s=\max_{i\in [1,s]}\{\reg I^i+s-i\}.
\end{equation}
(The similar equality holds for $J$.) Indeed, for $s\ge N$ and $1\le i\le \rstab(I)-1$, we have
\[
\reg I^s\ge ps \ge s+g\ge  \reg I^i+s-i.
\]
For $\rstab(I)\le i\le s$, we have $\reg I^i+s-i=(p-1)i+q_1+s \le ps+q_1=\reg I^s$. Hence \eqref{eq_regIs=max} is true.
\end{proof}

\begin{rem}
\label{rem_rstab}
The following example shows that the bound for $\rstab(F)$ in Corollary \ref{cor_rstab} is sharp, and that the difference $\rstab(F)- \max\{\rstab(I),\rstab(J)\}$ can be arbitrarily large. Let $n\ge 2$ be an integer, $R=k[a,b,x_1,\ldots,x_n]$, 
$$
I=(a^4,a^3b,ab^3,b^4)(x_1,\ldots,x_n)^2+a^2b^2(x_1^2,\ldots,x_n^2).
$$ 
Let $S=k[y]$ and $J=(y^6)$. Both $I$ and $J$ are generated in degree $6$. We show that in this case $\rstab(I)=2,\rstab(J)=1$, while the fiber product satisfies $\rstab(F)=\lceil (n+4)/5\rceil$. Indeed, we will prove the following claims:
\begin{enumerate}
\item $I^s=(a,b)^{4s}(x_1,\ldots,x_n)^{2s}$ for all $s\ge 2$.
\item $\reg I=n+5$, $\reg I^s=6s$ for all $s\ge 2$.
\item $\reg F^s=\max\{n+4+s,6s\}$ for all $s\ge 1$.
\end{enumerate}
From (2), we get $\rstab(I)=2$ and from (3), $\rstab(F)=\lceil (n+4)/5\rceil$.

For (1): Let $H=(a^4,a^3b,ab^3,b^4)$. Then $H^2=(a,b)^8$, $H^3=(a,b)^{12}$, whence $H^s=(a,b)^{4s}$ for all $s\ge 2$.

From $H(x_1,\ldots,x_n)^2 \subseteq I \subseteq (a,b)^4(x_1,\ldots,x_n)^2$, we obtain
\[
(a,b)^{4s}(x_1,\ldots,x_n)^{2s} =H^s(x_1,\ldots,x_n)^{2s} \subseteq I^s \subseteq (a,b)^{4s}(x_1,\ldots,x_n)^{2s}.
\]
This proves (1).

For (2): By part (1) and Lemma \ref{lem_tensor}, we get for all $s\ge 2$
\[
\reg I^s=\reg (a,b)^{4s}+\reg (x_1,\ldots,x_n)^{2s}=6s.
\]
To prove $\reg I=n+5$, first we observe that
\[
(a^4,a^3b,ab^3,b^4)(x_1,\ldots,x_n)^2 \cap a^2b^2(x_1^2,\ldots,x_n^2) =a^2b^2(a,b)(x_1^2,\ldots,x_n^2).
\]
The right hand side clearly is contained in the left hand side. The other containment follows from the next display
\begin{align*}
&(a^4,a^3b,ab^3,b^4)(x_1,\ldots,x_n)^2 \cap a^2b^2(x_1^2,\ldots,x_n^2)\\
 &\subseteq (a^3,b^3)\cap (a^2b^2)\cap (x_1^2,\ldots,x_n^2)\\
                                    & =a^2b^2(a,b) \cap (x_1^2,\ldots,x_n^2)=a^2b^2(a,b)(x_1^2,\ldots,x_n^2),
\end{align*}
where the last equality holds by Lemma \ref{lem_intersect}.

The last observation yields an exact sequence
\[
0\to U=(a^4,a^3b,ab^3,b^4)(x_1,\ldots,x_n)^2 \to I \to \frac{a^2b^2(x_1^2,\ldots,x_n^2)}{a^2b^2(a,b)(x_1^2,\ldots,x_n^2)}=V \to 0.
\]
Using Lemma \ref{lem_tensor},
\begin{align*}
\reg U &=\reg (a^4,a^3b,ab^3,b^4)+\reg (x_1,\ldots,x_n)^2 =7,\\
\reg V &=\reg \frac{a^2b^2}{(a,b)a^2b^2}+\reg (x_1^2,\ldots,x_n^2)=4+(n+1)=n+5.
\end{align*}
Since  $\reg U\le  \reg V$, we conclude that $\reg I=\reg V=n+5$.

For (3): Since $I$ and $J$ are monomial ideals generated in degree 6, by Corollary \ref{cor_reg_equigenerated}, for all $s\ge 1$,
\[
\reg F^s=\max_{i\in [1,s]}\{\reg I^i+s-i,\reg J^i+s-i\}.
\]
The desired conclusion follows from (2).
\end{rem}

\section{Depth}
\label{sect_depth}
Next we have the following intriguing result.
\begin{thm}
\label{thm_depth_fiberprod_polynomialrings}
Employ Notation \ref{notn_RandS}. Assume that not both $I$ and $J$ are zero. Assume further that either $\chara k=0$ or $I$ and $J$ are monomial ideals. Then for all $s\ge 2$, there is an equality $\depth F^s = 1$.
\end{thm}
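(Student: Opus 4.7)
The plan is to establish the two bounds $\depth F^s \geq 1$ and $\depth F^s \leq 1$ separately. The lower bound is immediate since $F^s$ is a nonzero ideal in the domain $T$, hence torsion-free as a $T$-module, so any nonzero linear form of $T$ is $F^s$-regular. For the upper bound, via the short exact sequence $0 \to F^s \to T \to T/F^s \to 0$ and the depth lemma, it suffices to show that $\depth(T/F^s) = 0$; equivalently, I will exhibit a nonzero homogeneous element $u \in T \setminus F^s$ with $(\mm + \nn)\,u \subseteq F^s$.

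Swapping the roles of $I$ and $J$ if necessary, I may assume $I \neq 0$. Let $d \geq 2$ be the smallest degree of a minimal homogeneous generator of $I$, fix such a generator $f \in I$, and pick any linear form $z \in \nn$. The candidate socle element is
\[
u := f^{s-1}\, z,
\]
which is nonzero of bidegree $(d(s-1),\,1)$ in the natural $(R,S)$-bigrading of $T$. Checking $(\mm+\nn)u \subseteq F^s$ is quick. First, $\mm u \subseteq (\mm f^{s-1})\, z \subseteq \mm I^{s-1}\cdot \nn = I^{s-1}(\mm\nn) \subseteq H^s \subseteq F^s$, where $I^{s-1}(\mm\nn)$ is the $b = 1$ summand of $H^s = \sum_{b=0}^s I^{s-b}(\mm\nn)^b$. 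Second, using $I \subseteq \mm^2$ to get $I\nn^2 \subseteq (\mm\nn)^2$, one has
\[
\nn u \subseteq f^{s-1}\nn^2 \subseteq I^{s-1}\nn^2 = I^{s-2}\cdot (I\nn^2) \subseteq I^{s-2}(\mm\nn)^2 \subseteq H^s \subseteq F^s,
\]
with the convention $I^0 = T$ when $s = 2$.

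The crucial step, and what I expect to be the most delicate part of the argument, is the claim $u \notin F^s$, which I establish by showing that $F^s$ vanishes in bidegree $(d(s-1),1)$. Every bihomogeneous generator of $F^s$ is a product $g_1 \cdots g_s$ where each $g_i$ is a generator of $F = I + J + \mm\nn$; grouping by source, writing $a, b, c$ for the numbers of factors coming from $I$, from $J$, and of the mixed form $x_iy_j$ with $a+b+c=s$, the product has $R$-degree at least $ad + c$ and $S$-degree at least $2b + c$. If the bidegree were componentwise at most $(d(s-1), 1)$, then $b = 0$ and $c \in \{0, 1\}$, whence $a \in \{s, s-1\}$; but then the $R$-degree is at least $ds$ or $d(s-1) + 1$, in either case strictly greater than $d(s-1)$, a contradiction. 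Since multiplication by elements of $T$ only raises bidegrees, this forces $(F^s)_{d(s-1), 1} = 0$, whereas $u$ is nonzero in that bidegree because $T$ is a domain. Thus $u$ is a socle element of $T/F^s$, so $\depth(T/F^s) = 0$ and $\depth F^s = 1$. It is worth noting that this route uses only $I \subseteq \mm^2$ and the bigraded structure of $T$, and so does not actually require the hypothesis that $\chara k = 0$ or that the ideals be monomial; the paper's own argument presumably obtains the same bound by tracking projective dimension through the Betti splittings of Proposition \ref{prop_Betti-splitting_power}, which do need that hypothesis.
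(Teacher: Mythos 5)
Your proof is correct and takes a genuinely different route from the paper. The paper reaches the upper bound $\depth F^s \le 1$ by running the Betti splitting machinery of Proposition~\ref{prop_Betti-splitting_power} through the same induction as in the regularity proof, obtaining the formula $\depth F^s = \min_{i\in[1,s]}\{2,\depth(\mm^{s-i}I^i),\depth(\nn^{s-i}J^i)\}$ and then invoking Lemma~\ref{lem_maximalideal_depth}; this is where the $\chara k=0$ or monomial hypothesis enters, since those splittings rest on the Tor-vanishing provided by Lemma~\ref{lem_Torvanishing}. You instead construct an explicit socle element $u=f^{s-1}z$ of $T/F^s$ and verify it by a bidegree count, which requires only that $T$ be bigraded, $I\subseteq\mm^2$, and $\nn\neq 0$. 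Both the socle computation $(\mm+\nn)u\subseteq F^s$ (using $I^{s-1}\mm\nn \subseteq H^s$ and $I^{s-1}\nn^2\subseteq I^{s-2}(\mm\nn)^2\subseteq H^s$ for $s\ge 2$) and the nonmembership $u\notin F^s$ (from $(F^s)_{(d(s-1),1)}=0$) check out. The trade-off is that the paper's approach yields the finer $\min$-formula for $\depth F^s$ as a byproduct of a uniform machine that simultaneously handles regularity and linearity defect, whereas your argument is tailored to depth alone; but your argument is elementary, characteristic-free, and in fact settles the part of the authors' stated conjecture about arbitrary characteristic that concerns depth, so it is genuinely stronger for this particular statement.
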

\begin{proof}
It suffices to prove that $\depth F^s\le 1$ for all $s\ge 2$. For this, we show that for all $s\ge 2$, there is an equality
\[
\depth F^s = \min_{i\in [1,s]} \bigl\{2, \depth (\mm^{s-i}I^i), \depth (\nn^{s-i}J^i)\bigr\}.
\]
The proof is similar to that of Theorem \ref{thm_reg_fiberprod_polynomialrings}. We only need to observe additionally that thanks to the hypothesis $\dim R,\dim S\ge 1$ and Lemma \ref{lem_tensor}, we have $\depth(\mm^s\nn^s)=\depth \mm^s+\depth \nn^s=2$.

It is harmless to assume that $I\neq (0)$. Then $\depth (\mm^{s-i}I^i)=1$ for any $1\le i\le s-1$, by the easy Lemma \ref{lem_maximalideal_depth} below. The proof is concluded.
\end{proof}
\begin{lem}
\label{lem_maximalideal_depth}
Let $(R,\mm)$ be a noetherian local ring. Let $M\neq 0$ be a finitely generated $R$-module such that $\depth M\ge 1$. Then $\depth (\mm M)=1$.
\end{lem}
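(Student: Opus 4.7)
The plan is to use the short exact sequence
\[
0 \longrightarrow \mm M \longrightarrow M \longrightarrow M/\mm M \longrightarrow 0
\]
together with the standard depth lemma, which will yield both inequalities $\depth(\mm M)\ge 1$ and $\depth(\mm M)\le 1$ essentially for free.

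First I would dispense with the triviality check that $\mm M\ne 0$: since $\depth M\ge 1$, there exists an $M$-regular element $x\in\mm$, so $xM\subseteq \mm M$ is nonzero (because $M\ne 0$). Next, by Nakayama's lemma the quotient $M/\mm M$ is nonzero, and it is annihilated by $\mm$, hence $\depth(M/\mm M)=0$.

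Now I apply the depth lemma to the displayed short exact sequence. On one hand,
\[
\depth(\mm M)\ \ge\ \min\bigl\{\depth M,\ \depth(M/\mm M)+1\bigr\}\ =\ \min\{\depth M,1\}\ =\ 1.
\]
On the other hand, the same depth lemma gives
\[
\depth(M/\mm M)\ \ge\ \min\bigl\{\depth(\mm M)-1,\ \depth M\bigr\},
\]
and since $\depth(M/\mm M)=0$ while $\depth M\ge 1$, the minimum cannot be achieved by $\depth M$, forcing $\depth(\mm M)-1\le 0$, i.e.~$\depth(\mm M)\le 1$. Combining the two inequalities gives the desired equality.

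There is no real obstacle; the only thing worth double-checking is that the version of the depth lemma we invoke is the standard one, which can equivalently be phrased via the long exact sequence in local cohomology: because $H^0_\mm(M)=0$, the connecting map $H^0_\mm(M/\mm M)\to H^1_\mm(\mm M)$ is an injection from the nonzero module $M/\mm M$, so $H^1_\mm(\mm M)\ne 0$, which simultaneously gives $H^0_\mm(\mm M)=0$ (from the exact sequence) and $\depth(\mm M)=1$. Either formulation works and the argument fits in a few lines.
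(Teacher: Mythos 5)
Your argument is correct and uses exactly the same short exact sequence $0\to\mm M\to M\to M/\mm M\to 0$ and the depth lemma that the paper invokes. The extra details you supply (checking $\mm M\neq 0$, $M/\mm M\neq 0$, and spelling out both inequalities, with the optional local-cohomology rephrasing) are just an expansion of the paper's one-line proof.
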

\begin{proof}
Apply the depth lemma to the exact sequence $0\to \mm M\to M \to \dfrac{M}{\mm M}\to 0$.
\end{proof}

\section{Linearity defect}
\label{sect_lind}

The main result of this section is
\begin{thm}
\label{thm_ld_fiberprod_polynomialrings}
Employ Notation \ref{notn_RandS}. Let $s\ge 1$ be an integer. Then there is an inequality 
$\max\{\lind_R I^s,\lind_S J^s\} \le \lind_T F^s$.

Assume further that one of the following conditions holds: 
\begin{enumerate}[\quad \rm(i)]
\item $\chara k=0$;
\item $I$ and $J$ are monomial ideals;
\item $I^i$ and $J^i$ are Koszul for all $1\le i\le s$.
\end{enumerate}
Then there is an upper bound for $\lind_T F^s$:
\[
\lind_T F^s \le \max_{i\in [1,s]} \bigl\{\lind_R (\mm^{s-i}I^i),\lind_S (\nn^{s-i}J^i)\bigr\}.
\]
\end{thm}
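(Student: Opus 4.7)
The plan has two parts: a lower bound from the algebra retract lemma, and an iterative upper bound driven by the Betti splittings of Proposition \ref{prop_Betti-splitting_power}.

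For the lower bound, I would use the canonical retract, namely the inclusion $\theta: R \hookrightarrow T$ together with the retraction $\phi: T \to R$ that sends every $S$-variable to $0$. Since $J \subseteq \nn^2$ and $\mm\nn$ both lie in $\ker\phi$, we have $\phi(F) = I$, so $\phi(F^s)R = I^s$, while $F^s \supseteq \theta(I^s)T$. The graded analog of Lemma \ref{lem_retract} then yields $\lind_R I^s \le \lind_T F^s$, and the symmetric argument gives $\lind_S J^s \le \lind_T F^s$.

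For the upper bound, I would work along the filtration $G_0 \subseteq G_1 \subseteq \cdots \subseteq G_s = F^s$ from Proposition \ref{prop_Betti-splitting_power}. This produces short exact sequences
\[
0 \longrightarrow G_{t-1} \longrightarrow G_t \longrightarrow N_t \longrightarrow 0, \qquad N_t = \frac{(\mm\nn)^{s-t}J^t}{\mm^{s-t+1}\nn^{s-t}J^t}.
\]
Because each decomposition $G_t = G_{t-1} + (\mm\nn)^{s-t}J^t$ is a Betti splitting, the inclusions $\Tor^T_i(k, G_{t-1}) \hookrightarrow \Tor^T_i(k, G_t)$ force the connecting map $\Tor^T_{i+1}(k, N_t) \to \Tor^T_i(k, G_{t-1})$ to vanish for every $i$. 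Plugging $d_M = 0$ into Lemma \ref{lem_exactseq} gives $\lind_T G_t \le \max\{\lind_T G_{t-1}, \lind_T N_t\}$. A direct inspection identifies $N_t \cong (\mm^{s-t}/\mm^{s-t+1}) \otimes_k (\nn^{s-t}J^t)$, and since $\mm^{s-t}/\mm^{s-t+1}$ is a direct sum of shifted copies of $k$, which has a linear Koszul resolution over the polynomial ring $R$, Lemma \ref{lem_tensor} gives $\lind_T N_t = \lind_S(\nn^{s-t}J^t)$. Iterating from $t = 1$ to $t = s$ yields
\[
\lind_T F^s \le \max\bigl\{\lind_T H^s,\ \max_{1 \le t \le s}\lind_S(\nn^{s-t}J^t)\bigr\}.
\]

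It remains to bound $\lind_T H^s$, and this is the step that I expect to require the most care. My plan is to view $H = I + \mm\nn$ itself as the fiber product of $I \subseteq R$ and the zero ideal of $S$ in $T$, and to run exactly the same machinery with the roles of $I, J$ and $R, S$ swapped; the analog of Proposition \ref{prop_Betti-splitting_power}(3) then consumes the hypothesis on $I$ rather than on $J$, which is why both hypotheses appear in the theorem. The symmetric iteration delivers
\[
\lind_T H^s \le \max\bigl\{\lind_T(\mm\nn)^s,\ \max_{1 \le t \le s}\lind_R(\mm^{s-t}I^t)\bigr\},
\]
and Lemma \ref{lem_tensor} kills the first term via $\lind_R \mm^s = \lind_S \nn^s = 0$. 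Combining the two estimates yields the claimed upper bound. The main obstacle I foresee is organizational rather than conceptual: verifying that the symmetric filtration for $H^s$ really admits the Betti splittings needed (invoking Lemma \ref{lem_Torvanishing} on the $I$-side, which is why the hypothesis on $I$ is essential) and that the tensor decomposition of each $N_t$ is clean enough for Lemma \ref{lem_tensor} to deliver the correct linearity defect in $T$ at every stage.
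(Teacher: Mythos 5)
Your proposal is correct and follows essentially the same route as the paper: the lower bound via the graded analogue of Lemma \ref{lem_retract} with the coordinate retraction $T \to R$, and the upper bound by iterating along the filtration $G_0 \subseteq \cdots \subseteq G_s = F^s$ of Proposition \ref{prop_Betti-splitting_power}, using the Betti splitting to force the connecting maps to vanish, Lemma \ref{lem_exactseq} with $d_M = 0$, the tensor decomposition of $N_t$ with Lemma \ref{lem_tensor}, and then handling $H^s$ as the fiber product of $I$ and $(0) \subseteq S$ with roles swapped. The "organizational" concern you flagged at the end is exactly what Proposition \ref{prop_Betti-splitting_power}(3) already supplies: for the $H^s$ filtration the Betti splittings consume the hypothesis on $I$ via Lemma \ref{lem_Torvanishing}, and the base case is killed because $\lind_T(\mm\nn)^s = \lind_R \mm^s + \lind_S \nn^s = 0$; there is no additional obstacle.
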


\begin{proof}
By the graded analog of Lemma \ref{lem_retract}, we get the first inequality 
$$
\max \{\lind_R I^s,\lind_S J^s\} \le \lind_T F^s.
$$

For the remaining inequality, we argue similarly as for Theorem \ref{thm_reg_fiberprod_polynomialrings}. We apply Proposition \ref{prop_Betti-splitting_power} and employ the notation there. For each $1\le t\le s$, there is an exact sequence
\[
0\to G_{t-1} \to G_t \to \frac{(\mm\nn)^{s-t}J^t}{\mm^{s-t+1}\nn^{s-t}J^t} \to 0.
\] 
By Proposition \ref{prop_Betti-splitting_power} and Lemma \ref{lem_criterion_Bettisplit}, the connecting map 
$$
\Tor^T_i\biggl(k,\frac{(\mm\nn)^{s-t}J^t}{\mm^{s-t+1}\nn^{s-t}J^t}\biggr) \to \Tor^T_{i-1}(k,G_{t-1})
$$
is zero for all $i$. Hence by Lemma \ref{lem_exactseq},
\[
\lind_T G_t \le \max\biggl\{\lind_T G_{t-1}, \lind_T \frac{(\mm\nn)^{s-t}J^t}{\mm^{s-t+1}\nn^{s-t}J^t}\biggr\}=\max \{\lind_T G_{t-1},\lind_S (\nn^{s-t}J^t)\}.
\]
The equality follows from Lemma \ref{lem_tensor}. Now as $G_s=F^s$ and $G_0=H^s$, we conclude that
\[
\lind_T F^s \le \max\{\lind_T H^s, \lind_S (\nn^{s-t}J^t): 1\le t\le s\}.
\]
Now $H=I+\mm\nn$ can be viewed as the fiber product of $I$ and $(0)\subseteq S$. Arguing as above, we get
\begin{align*}
\lind_T H^s &\le \max\{\lind_T (\mm\nn)^s, \lind_R (\mm^{s-t}I^t): 1\le t\le s\}\\
            &=\max\{\lind_R (\mm^{s-t}I^t): 1\le t\le s\}.
\end{align*}
The last equality holds since thanks to Lemma \ref{lem_tensor}, $\lind_T (\mm\nn)^s=\lind_R \mm^s+\lind_S \nn^s=0$. Putting everything together, we get 
\[
\lind_T F^s \le \max\{\lind_R (\mm^{s-t}I^t), \lind_S (\nn^{s-t}J^t): 1\le t\le s\}.
\]
The proof is concluded.
\end{proof}

\begin{cor}
\label{cor_Koszul_powers}
Employ Notation \ref{notn_RandS}. For any given integer $s\ge 1$, the following statements are equivalent:
\begin{enumerate}[\quad \rm(i)]
\item For all $1\le i\le s$, $I^i$ and $J^i$ are Koszul ideals.
\item For all $1\le i\le s$, $F^i$ is a Koszul ideal.
\end{enumerate}
\end{cor}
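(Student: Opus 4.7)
The plan is to deduce the corollary directly from Theorem \ref{thm_ld_fiberprod_polynomialrings}, exploiting the fact that condition (iii) in its hypothesis (``$I^i$ and $J^i$ are Koszul for $1 \le i \le s$'') does not require any assumption on the characteristic of $k$ or on the monomial structure of $I, J$. Thus the two-sided bound
\[
\max\{\lind_R I^s,\lind_S J^s\} \le \lind_T F^s \le \max_{i\in [1,s]} \bigl\{\lind_R (\mm^{s-i}I^i),\lind_S (\nn^{s-i}J^i)\bigr\}
\]
is available whenever all the $I^i$ and $J^i$ are Koszul, and this is exactly the setting of (i).

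For the implication (ii) $\Rightarrow$ (i), I would fix $1 \le i \le s$ and apply the lower bound in Theorem \ref{thm_ld_fiberprod_polynomialrings} (at exponent $i$ in place of $s$), which gives
\[
\lind_R I^i \le \lind_T F^i = 0 \quad \text{and} \quad \lind_S J^i \le \lind_T F^i = 0,
\]
forcing both $I^i$ and $J^i$ to be Koszul.

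For the converse (i) $\Rightarrow$ (ii), I would again proceed by fixing $1 \le i \le s$ and invoking Theorem \ref{thm_ld_fiberprod_polynomialrings} under hypothesis (iii), using the assumption that $I^j$ and $J^j$ are Koszul for all $1 \le j \le i$. The upper bound reads
\[
\lind_T F^i \le \max_{j\in [1,i]} \bigl\{\lind_R (\mm^{i-j}I^j),\lind_S (\nn^{i-j}J^j)\bigr\},
\]
so it suffices to check that each $\mm^{i-j}I^j$ and each $\nn^{i-j}J^j$ is itself Koszul. This follows by iterating Lemma \ref{lem_maxideal_Koszul}: since $R$ is a polynomial ring (hence Koszul) and $I^j$ is Koszul by hypothesis, applying the lemma $(i-j)$ times gives that $\mm^{i-j}I^j$ is Koszul, and likewise for $\nn^{i-j}J^j$. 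Hence every term on the right-hand side vanishes and $\lind_T F^i = 0$.

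There is no real obstacle here beyond recognising that case (iii) of Theorem \ref{thm_ld_fiberprod_polynomialrings} decouples the argument from any characteristic or monomiality hypothesis, which is why the corollary can be stated unconditionally. The final assertion about all powers being componentwise linear is then immediate by letting $s \to \infty$ and recalling that over a polynomial ring Koszul modules coincide with componentwise linear modules (Section \ref{subsect_Koszul}).
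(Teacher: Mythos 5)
Your proposal is correct and follows essentially the same route as the paper's own proof: the implication (ii) $\Rightarrow$ (i) is the lower bound $\max\{\lind_R I^i,\lind_S J^i\}\le \lind_T F^i$ of Theorem \ref{thm_ld_fiberprod_polynomialrings}, and (i) $\Rightarrow$ (ii) invokes the upper bound under hypothesis (iii) of that theorem, followed by iterating Lemma \ref{lem_maxideal_Koszul} to see that $\mm^{i-j}I^j$ and $\nn^{i-j}J^j$ remain Koszul. You have also correctly identified why the corollary is characteristic-free: case (iii) of the theorem requires neither $\chara k=0$ nor monomiality.
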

\begin{proof}
(ii) $\Longrightarrow$ (i): Use the first inequality in Theorem \ref{thm_ld_fiberprod_polynomialrings}.

(i) $\Longrightarrow$ (ii): We prove that $\lind_T F^i=0$ for $i=s$. The same argument works for the smaller $i$s. By Theorem \ref{thm_ld_fiberprod_polynomialrings}, 
\[
\lind_T F^s \le \max \{\lind_R (\mm^{s-i}I^i), \lind_S (\nn^{s-i}J^i): 1\le i\le s\}.
\]
Now apply Lemma \ref{lem_maxideal_Koszul}, $\lind_R (\mm^{s-i}I^i)=\lind_S (\nn^{s-i}J^i)$ for every $1\le i\le s$. Hence $\lind_T F^s=0$.
\end{proof}
Although we can only provide an upper bound for $\lind_T F^s$, we know of no example where the strict inequality occurs. Hence we ask
\begin{quest}
\label{quest_ld}
Employ Notation \ref{notn_RandS}. Is it true that for every $s\ge 1$, the equality
\[
\lind_T F^s = \max \{\lind_R (\mm^{s-i}I^i), \lind_S (\nn^{s-i}J^i): 1\le i\le s\}
\]
holds?
\end{quest}
Similarly, we wonder if the formulas for $\depth F^s$ and $\reg F^s$ in Theorems \ref{thm_reg_fiberprod_polynomialrings} and \ref{thm_depth_fiberprod_polynomialrings} always hold regardless of the characteristic of $k$.

\section{An application to edge ideals}
\label{sect_edgeideals}
An immediate consequence of Theorem \ref{thm_reg_fiberprod_polynomialrings} is that the regularity of powers of a fiber product is always increasing, at least in characteristic $0$.
\begin{cor}
\label{cor_increasingreg_fiberprod}
Let $(R,\mm)$ and $(S,\nn)$ be standard graded polynomial rings of positive dimensions over $k$. Let $I\subseteq \mm^2, J\subseteq \nn^2$ be homogeneous ideals, and $F$ their fiber product in $R\otimes_k S$. Assume that either $\chara k=0$ or $I$ and $J$ are monomial ideals. Then for all $s\ge 1$, there is an inequality $\reg F^s <\reg F^{s+1}$.
\end{cor}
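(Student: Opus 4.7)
The plan is to combine Theorem \ref{thm_reg_fiberprod_polynomialrings} with an elementary monotonicity of regularity under multiplication by the graded maximal ideal. The starting point is the formula
\[
\reg F^s=\max_{i\in[1,s]}\bigl\{\reg(\mm^{s-i}I^i)+s-i,\;\reg(\nn^{s-i}J^i)+s-i\bigr\}.
\]
If $I=J=0$, then $F=\mm\nn$ and $\reg F^s=2s$ is obviously strictly increasing, so one may assume at least one of $I,J$ is nonzero. I would then pick an index $i_0\in[1,s]$ at which the maximum is attained; after swapping the roles of $I$ and $J$ if necessary, write $\reg F^s=\reg(\mm^{s-i_0}I^{i_0})+s-i_0$ with $I\neq 0$.

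The next step is to lower-bound $\reg F^{s+1}$ by picking out the term at the same index $i_0\in[1,s+1]$, giving
\[
\reg F^{s+1}\geq\reg(\mm^{s+1-i_0}I^{i_0})+(s+1)-i_0.
\]
Granting the inequality $\reg(\mm^{s+1-i_0}I^{i_0})\geq\reg(\mm^{s-i_0}I^{i_0})$, one immediately obtains $\reg F^{s+1}\geq\reg F^s+1$, which is the desired strict increase. So everything reduces to this monotonicity of $\reg(\mm^j M)$ in $j$, for $M=I^{i_0}$.

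To establish this, note that $M=I^{i_0}$ is a nonzero ideal in the polynomial ring $R$, so $M\neq 0$ and $\depth_R M\geq 1$. Lemma \ref{lem_maxideal_reg}(i) then yields, for each $j\geq 1$,
\[
\reg(\mm^j M)=\max\bigl\{\reg M,\;\reg(M/\mm^j M)+1\bigr\}.
\]
Each quotient $M/\mm^j M$ is annihilated by $\mm^j$ and hence of finite length, so its regularity equals its top nonvanishing degree; since $M/\mm^{j+1}M$ surjects onto $M/\mm^j M$, the top degree is non-decreasing in $j$. Consequently $j\mapsto\reg(\mm^j M)$ is non-decreasing for all $j\geq 0$, which supplies the required comparison. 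The boundary case $s-i_0=0$, where $\mm^0M=M$, is absorbed by the $\reg M$ entry in the max.

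Apart from Theorem \ref{thm_reg_fiberprod_polynomialrings} itself, the only moving part is the monotonicity of $\reg(\mm^j M)$ in $j$, which is no more than a short observation on top degrees of finite-length modules. I do not anticipate a genuine obstacle beyond careful bookkeeping of indices; the main technical content was already done in the identification of $\reg F^s$ and in Lemma \ref{lem_maxideal_reg}.
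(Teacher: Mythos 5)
Your proof is correct and follows essentially the same route as the paper: invoke Theorem \ref{thm_reg_fiberprod_polynomialrings} for the exact formula, then use Lemma \ref{lem_maxideal_reg}(i) to get $\reg(\mm^{s+1-i}I^i)\ge\reg(\mm^{s-i}I^i)$ and likewise for $J$, and take the maximum. The paper applies this term-by-term to each index in the max rather than isolating a maximizing $i_0$, but that is only a cosmetic difference; the reduction to monotonicity of $j\mapsto\reg(\mm^jM)$ via Lemma \ref{lem_maxideal_reg} is exactly the argument the authors use.
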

\begin{proof}
Applying Lemma \ref{lem_maxideal_reg} for $M=\mm^{s-i}I^i$, we get for all $1\le i\le s$ that $\reg(\mm^{s+1-i}I^i)\ge \reg(\mm^{s-i}I^i)$. In particular, $\reg(\mm^{s+1-i}I^i)+s+1-i > \reg (\mm^{s-i}I^i)+s-i$. Taking the maximum over all $1\le i\le s$ and using Theorem \ref{thm_reg_fiberprod_polynomialrings}, we get $\reg F^{s+1}>\reg F^s$.
\end{proof}
Note that the regularity of powers of a monomial ideal need not be increasing, even if it is generated in a single degree. A counterexample is the ideal $I$ in Remark \ref{rem_rstab} for $n\ge 8$.

A (finite, simple) graph $G$ is a pair $(V,E)$, where $V$ is a finite set, and $E$ is a set consisting of two-element subsets of $V$. We call $V$ the set of vertices and $E$ the set of edges of $G$. We say that $G$ is {\it bipartite} if $V$ can be written as a disjoint union $V_1\cup V_2$ such that for every $i\in \{1,2\}$, no edge of $E$ connects two vertices in $V_i$. In that case, we call $V_1\cup V_2$ a bipartition of $G$. If additionally $E=V_1\times V_2$ then we say that $G$ is a {\it complete bipartite} graph. A graph $G'=(V',E')$ is said to be a subgraph of $G$ if $V\subseteq V'$ and $E\subseteq E'$.

If $G=(V,E)$ is a graph with the vertex set $V=\{1,\ldots,n\}$, where $n\ge 1$, the edge ideal of $G$ is 
\[
I(G)=(x_ix_j: \{i,j\}\in E)\subseteq k[x_1,\ldots,x_n].
\]
It is known that the function $\reg I(G)^s$ is weakly increasing in the following cases:
\begin{enumerate}
\item The complement graph of $G$ is chordal \cite[Theorem 3.2]{HHZ};
\item $G$ is $2K_2$-free and cricket free \cite[Theorems 3.4 and 6.17]{Ba}; 
\item Certain classes of bipartite graph including unmixed, weakly chordal and $P_6$-free bipartite graphs \cite[Corollary 5.1]{JNS};
\item $G$ contains a leaf \cite[Theorem 5.1]{CHHKTT}. This subsumes the class of whiskered graphs treated in \cite[Corollary 5.1(2)]{JNS}.
\end{enumerate}
For related work on some more special cases, we refer to \cite{AlBa,ABS,BHT,CRJNP,Er1,Er2,JS,MBMVV,MSY}.

We can provide a class of graphs $G$ for which the function $\reg I(G)^s$ is strictly increasing.
\begin{cor}
\label{cor_edgeideals}
Let $G=(V,E)$ be a graph. Assume that $V$ can be written as a disjoint union of two non-empty subsets $V_1$ and $V_2$ such that $G$ contains the complete bipartitie graph with the bipartition $V_1\cup V_2$ as a subgraph. Then the function $\reg I(G)^s$ is strictly increasing.
\end{cor}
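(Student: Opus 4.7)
My plan is to recognize $I(G)$ as a fiber product and then quote Corollary \ref{cor_increasingreg_fiberprod} as a black box. The hypothesis on $G$ is tailored precisely so that the cross-term $\mm\nn$ appears in the edge ideal, which is exactly what distinguishes a fiber product from an ordinary sum.

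First I would set up notation that matches Notation \ref{notn_RandS}. Write $V_1=\{x_1,\ldots,x_m\}$ and $V_2=\{y_1,\ldots,y_n\}$ with $m,n\ge 1$, and put $R=k[x_1,\ldots,x_m]$, $S=k[y_1,\ldots,y_n]$, $T=R\otimes_k S=k[V]$, with graded maximal ideals $\mm\subseteq R$ and $\nn\subseteq S$. Let $I\subseteq\mm^2$ be the edge ideal of the subgraph of $G$ induced on $V_1$ (generated by the squarefree quadratic monomials $x_ix_j$ with $\{x_i,x_j\}\in E$) and let $J\subseteq\nn^2$ be defined analogously from edges inside $V_2$. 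Both are monomial ideals, which is the hypothesis we need.

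Next I would verify that $I(G)=I+J+\mm\nn$, i.e.\ that $I(G)$ is the fiber product $F$ of $I$ and $J$ in $T$. The inclusion $I+J\subseteq I(G)$ is immediate. The assumption that $G$ contains the complete bipartite graph on $V_1\cup V_2$ as a subgraph says that every $\{x_i,y_j\}$ is an edge of $G$, so every monomial $x_iy_j$ lies in $I(G)$, giving $\mm\nn\subseteq I(G)$. Conversely, each minimal generator of $I(G)$ is a product of two vertices, and the pair lies either entirely in $V_1$, entirely in $V_2$, or straddles the bipartition; these three cases contribute generators of $I$, $J$, and $\mm\nn$ respectively. Hence $I(G)\subseteq I+J+\mm\nn$, and the two ideals coincide.

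Once the identification $I(G)=F$ is in place, the statement follows directly from Corollary \ref{cor_increasingreg_fiberprod}: $R$ and $S$ have positive Krull dimension, and $I,J$ are monomial ideals, so that corollary yields $\reg F^s<\reg F^{s+1}$ for every $s\ge 1$, which is exactly the asserted strict monotonicity of $\reg I(G)^s$. There is essentially no obstacle beyond the bookkeeping of the identification: the work has already been done in the preceding sections, and the complete bipartite hypothesis is engineered precisely to produce the term $\mm\nn$ that turns the naive sum $I+J$ into a genuine fiber product.
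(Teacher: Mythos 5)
Your proof is correct and is essentially identical to the paper's: you identify $I(G)$ with the fiber product $I+J+\mm\nn$, where $I$ and $J$ are the edge ideals of the induced subgraphs on $V_1$ and $V_2$, and then invoke Corollary~\ref{cor_increasingreg_fiberprod} for monomial ideals. The verification that the complete bipartite condition yields exactly the cross term $\mm\nn$ is the same bookkeeping the paper performs.
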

\begin{proof}
For $i=1,2$, let $E_i$ be the subset of $E$ consisting of edges with both vertices lying in $V_i$, and $G_i=(V_i,E_i)$. Denote $I_i=I(G_i)$. Then the assumption implies that $I(G)$ is the fiber product of $I_1$ and $I_2$. Since $I_1$ and $I_2$ are monomial ideals, the result follows from Corollary \ref{cor_increasingreg_fiberprod}.
\end{proof}

\appendix

\section{A special monomial ideal}
\label{appendix}

Our goal in this section is to prove 
\begin{thm}
\label{thm_counterexample}
Let $k=\Q$, $R=k[a,b,c,d,x,y,z,t]$, $\mm=R_+$, and 
\[
I=(a^2,b^2,c^2,d^2,abx,cdx,acy,bdy,adz,bcz,cdyzt).
\]
Then $\reg I^n = 2n+3$ for all $n\ge 3$ and $\reg (\mm^sI^2) = s+8$ for all $s\ge 0$. In particular, for all $n\ge 3$,
\[
\reg I^n = 2n+3 < 2n+4 = \reg (\mm^{n-2}I^2)+n-2.
\]
\end{thm}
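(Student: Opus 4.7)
The three claims are logically linked: once $\reg I^2 = 8$ is established, Lemma \ref{lem_maxideal_reg}(i) applied to $M = I^2$ gives
\[
\reg(\mm^s I^2) = \max\{\reg I^2, s + t_0(I^2)\} = \max\{8, s + 8\} = s + 8,
\]
the maximum generator degree $t_0(I^2) = 8$ being realized by the minimal generator $abcdxyzt = abx \cdot cdyzt$. So the substantive content reduces to the two formulas $\reg I^2 = 8$ and $\reg I^n = 2n + 3$ for $n \ge 3$.

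For the lower bounds, the plan is to exhibit explicit minimal generators at the target degrees. That $abcdxyzt$ is minimal in $I^2$ follows from a direct combinatorial check: the variable $t$ appears in no generator of $I$ other than $cdyzt$, so any product of two generators of $I$ equal to $abcdxyzt$ must use $cdyzt$; the complementary factor must then be $abx$, recovering only the trivial factorization, and pairwise products of two degree-$3$ generators always introduce a forbidden square, so no proper divisor of $abcdxyzt$ lies in $I^2$. Analogously, for $n \ge 3$ I would verify that $a^{2n-3}bcdxyz = (a^2)^{n-3} \cdot abx \cdot acy \cdot adz$ is a minimal generator of $I^n$ at degree $2n + 3$: absence of $t$ rules out $cdyzt$, the exponent constraints force exactly $n - 3$ of the factors to be $a^2$ and the remaining three to be precisely $\{abx, acy, adz\}$, and an enumeration of products of $n$ generators of total degree at most $2n + 2$ shows no proper divisor lies in $I^n$.

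For the upper bounds I would work by induction on $n$, using the decomposition $I = I' + (cdyzt)$ where $I' = (a^2, b^2, c^2, d^2, abx, cdx, acy, bdy, adz, bcz)$ is the degree-$\le 3$ part of $I$. A direct colon computation yields $I' : cdyzt = (a, b, c, d, x)$, giving the short exact sequence
\[
0 \to \frac{R}{(a,b,c,d,x)}(-5) \xrightarrow{\cdot cdyzt} \frac{R}{I'} \to \frac{R}{I} \to 0,
\]
which already settles the case $n = 1$. For higher powers one writes $I^n = (I')^n + cdyzt \cdot I^{n-1}$ and iterates analogous short exact sequences, with the key ingredients being the colon ideals $(I')^n : (cdyzt)^k I^{n-k}$ together with an independent bound $\reg(I')^n \le 2n + 1$ arising from the symmetric structure of $I'$ through the three pair-partitions $\{a,b\}|\{c,d\}$, $\{a,c\}|\{b,d\}$, $\{a,d\}|\{b,c\}$ of $\{a, b, c, d\}$. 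The main obstacle will be the explicit combinatorial analysis of these colon ideals and of $\reg(I')^n$: both demand careful bookkeeping of how the six degree-$3$ generators interact in powers, and a direct Betti-number verification of the base cases $n = 2$ and $n = 3$ will likely be needed to ensure the induction closes cleanly.
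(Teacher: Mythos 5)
Your treatment of $\reg(\mm^s I^2)$ has a genuine gap. You invoke Lemma~\ref{lem_maxideal_reg}(i) to assert $\reg(\mm^s I^2)=\max\{\reg I^2,\,s+t_0(I^2)\}$, but part~(i) only gives $\reg(\mm^s M)=\max\{\reg M,\ \reg(M/\mm^s M)+1\}$; turning the second term into $s-1+t_0(M)$ is precisely part~(ii), which requires $M$ to be generated in a single degree. The ideal $I^2$ has generators in degrees ranging from $4$ to $8$, so part~(ii) does not apply, and in fact the formula $\reg(\mm^s M)=\max\{\reg M, s+t_0(M)\}$ is \emph{false} for non-equigenerated modules --- the paper records a counterexample (with $I=(a^3,ab^2,ac^2,a^2bc)$ and $s=2$) in the remark immediately following Lemma~\ref{lem_maxideal_reg}. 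The inequality $\reg(M/\mm^s M)\le s-1+t_0(M)$ is always true, so your upper bound $\reg(\mm^s I^2)\le s+8$ survives; what is missing is the lower bound. Establishing that $abcdxyzt$ is a minimal generator of $I^2$ gives $t_0(I^2)=8$, but this does \emph{not} imply $t_0(\mm^s I^2)=s+8$: after multiplying by $\mm^s$, the top-degree generator may become redundant against products of lower-degree generators (exactly what happens in the paper's remark). You actually need a monomial of degree $s+8$ in $\mm^s I^2\setminus \mm^{s+1}I^2$, and the minimality argument then involves the factor of $\mm^s$; the paper does this with the element $abcdxyzt^{s+1}$, where tracking the exponent of $t$ forces the factor $cdyzt$ and then $abx$, leaving $t^s\in\mm^s\setminus\mm^{s+1}$.

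For the rest, your lower bound for $\reg I^n$ via $a^{2n-3}bcdxyz=(a^2)^{n-3}\cdot abx\cdot acy\cdot adz$ is a plausible alternative to the paper's $c^{2n-1}dyzt$; the latter is cleaner because $f_1\notin(a,b,d^2,x)$ immediately restricts the possible factors to $\{c^2,cdyzt\}$ and a degree count finishes, while your choice requires a more elaborate analysis of the squarefree variables, but the idea is sound. The upper bound $\reg I^n\le 2n+3$, however, is left as a plan rather than a proof: the crucial estimate $\reg (I')^n\le 2n+1$, the identification of the colon ideals $(I')^n:(cdyzt)^kI^{n-k}$, and the control of the resulting short exact sequences are all announced but not carried out, and you yourself flag that base cases would be checked by machine. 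This is also not the paper's route: the paper keeps $I^n=H^{n-3}I^3$ intact and instead colons out the variables $x,y,z$ (Lemma~\ref{lem_reg_linearforms}), reducing to concrete ideals in $k[a,b,c,d]$ via Lemmas~\ref{lem_identities}--\ref{lem_regularity_powers}; even with these shortcuts the argument runs several pages, so you should expect your decomposition to require comparable bookkeeping before it can be called a proof.
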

The proof is not difficult, but technically involved.

First, we have the following standard lemma.
\begin{lem}
\label{lem_reg_linearforms}
Let $M$ be a monomial ideal in a polynomial ring $R$ over $k$, and $x_1,\ldots,x_n$ pairwise distinct variables of $R$. Then there is an inequality
\begin{align*}
\reg M  \le  \mathop{\max_{0\le s\le n}}_{1\le i_1<\cdots<i_s\le n}\left\{\reg \biggl((M,x_{i_1},\ldots,x_{i_s}):\prod_{j\in [n] \setminus \{i_1,\ldots,i_s\}}x_j\biggr)+n-s\right\}.
\end{align*}
\end{lem}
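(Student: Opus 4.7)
The plan is to prove the lemma by induction on $n$, with the engine being the standard short exact sequence
$$
0 \longrightarrow (R/(M:x))(-1) \xrightarrow{\cdot x} R/M \longrightarrow R/(M,x) \longrightarrow 0
$$
attached to a variable $x$, which yields the basic inequality $\reg M \le \max\{\reg(M:x)+1,\ \reg(M,x)\}$. The case $n=0$ is vacuous, and $n=1$ is precisely this inequality with $x = x_1$.

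For the inductive step from $n-1$ to $n$, I would apply the basic inequality with $x = x_n$ and then invoke the inductive hypothesis separately on $M:x_n$ and on $(M, x_n)$, using the $n-1$ variables $x_1,\ldots,x_{n-1}$. The key identity that makes all the bookkeeping align is the following elementary fact, valid for any monomial ideal $N$ and any two distinct variables $y\ne z$:
$$
(N, z):y = (N:y,\ z).
$$
Indeed, for a monomial $f$ one has $yf \in (N,z)$ iff $yf\in N$ or $z\mid yf$, and the latter forces $z\mid f$ because $y\ne z$. Iterating this identity, for any $S=\{i_1,\ldots,i_s\}\subseteq [n-1]$ one may rewrite
$$
\bigl((M:x_n,\, x_{i_1},\ldots,x_{i_s})\colon\prod_{j\in [n-1]\setminus S} x_j\bigr)=\bigl((M,\, x_{i_1},\ldots,x_{i_s})\colon x_n\prod_{j\in [n-1]\setminus S} x_j\bigr),
$$
so the contributions from the $M:x_n$ branch are indexed precisely by subsets $S\subseteq [n]$ not containing $n$, and the weight $(n-1)-s$ from the inductive step combined with the $+1$ from the outer colon by $x_n$ yields the required $n-s$.

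Symmetrically, the $(M, x_n)$ branch contributes terms indexed by subsets $S\subseteq [n]$ with $n\in S$, of weight $(n-1)-(|S|-1) = n-|S|$, since the inductive call uses $S\setminus\{n\}\subseteq [n-1]$. Combining the two branches covers every subset of $[n]$ exactly once with the correct weight, finishing the induction. There is no conceptual obstacle; the only subtlety is the combinatorial bookkeeping, namely verifying that the $+1$ contributions from each colon operation aggregate to $n-s$ and that each subset of $[n]$ arises exactly once, either from the $M:x_n$ branch (when $n\notin S$) or from the $(M, x_n)$ branch (when $n\in S$).
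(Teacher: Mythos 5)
Your proposal is correct and takes essentially the same approach as the paper: induction on $n$ using the short exact sequence for one variable, then applying the inductive hypothesis to the two resulting ideals with the remaining $n-1$ variables, and using the commutation identity between colon-by-a-variable and addition-of-a-different-variable to reassemble the terms. The only cosmetic difference is that the paper peels off $x_1$ rather than $x_n$, and expresses the key identity in the aggregated form $(M:x_1)+(x_{i_1},\ldots,x_{i_s})=(M,x_{i_1},\ldots,x_{i_s}):x_1$, which is precisely your iterated version of $(N,z):y=(N:y,z)$.
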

\begin{proof}
We use induction on $n\ge 1$. For $n=1$, we have to show
\begin{equation}
\label{eq_ineq_reg_oneform}
\reg M \le \max \{\reg (M,x_1),\reg (M:x_1)+1\},
\end{equation}
which is immediate from the exact sequence
$$
0\to \frac{R}{M:x_1}(-1) \to \frac{R}{M} \to \frac{R}{(M,x_1)} \to 0.
$$
For $n\ge 2$, we only need to use \eqref{eq_ineq_reg_oneform} and the induction hypothesis for each of the ideals $(M,x_1), M:x_1$ and the variables $x_2,\ldots,x_n$.  It is useful to note that since $M$ is a monomial ideal, for all $2\le i_1<\cdots <i_s\le n$,
\[
(M:x_1)+(x_{i_1},\ldots,x_{i_s})=(M,x_{i_1},\ldots,x_{i_s}):x_1.
\]
The desired inequality then follows.
\end{proof}
We also need some elementary computations. 
\begin{notn}
\label{notn_ideals}
Let $\qq=(a,b,c,d)$, $H=(a^2,b^2,c^2,d^2)$. Furthermore, denote
\begin{align*}
K &=(a^2,b^2,c^2,d^2,abx,cdx),\\ 
L &=(a^2,b^2,c^2,d^2,ab,cd),\\
V_1 &=H^2+H(a,b)(c,d)+ab(c^2,d^2)+(a^2,b^2)cd,\\
V_2 &=H^2+H(a,c)(b,d)+ac(b^2,d^2)+(a^2,c^2)bd.
\end{align*}
For each $s\ge 0$, denote $W_s =H^{s+1}abcd(y,z)+tcd(c^2,d^2)^{s+2}$.
\end{notn}

\begin{lem}
\label{lem_identities}
With Notation \ref{notn_ideals}, the following hold:
\begin{enumerate}[\quad \rm(i)]
\item $I^4=HI^3$.
 \item $I^3:xyz=\qq^6$.
 \item $K^3:x^2=L^3$.
 \item $K^3:x+(x)=H^2L+(x)$.
 \item For each $s\ge 1$, $\qq^{2s+1}\subseteq L^s$. 
\end{enumerate}
\end{lem}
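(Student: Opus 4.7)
The plan is to verify each of the five identities directly at the level of monomial generators, exploiting the explicit structure of $I$, $H$, $K$, $L$, and $\qq$. The arguments split into two types: (v) and (i) are combinatorial decomposition statements, while (ii), (iii), (iv) are colon computations sharing a common template.

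For (v), I would first observe that $L = (a,b)^2 + (c,d)^2$, since $ab\in(a,b)^2$ and $cd\in(c,d)^2$. A monomial $m \in \qq^{2s+1}$ factors as $m = m_1 m_2$ with $m_1 \in k[a,b]$ of degree $i$ and $m_2 \in k[c,d]$ of degree $2s+1-i$, and because $2s+1$ is odd, exactly one of these degrees is even. Writing the even one as $2j$ and using that the odd one is at least $2(s-j)$ would place $m$ in $((a,b)^2)^j ((c,d)^2)^{s-j} \subseteq L^s$. For (i), I would set $I' = (abx, cdx, acy, bdy, adz, bcz, cdyzt)$, so that $I = H + I'$ and $I^4 = HI^3 + (I')^4$, reducing the task to $(I')^4 \subseteq HI^3$. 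The $(a,b,c,d)$-part of any product of four generators of $I'$ has degree $8$ in only four variables, so by pigeonhole some variable appears squared, extracting an element of $H$. A case analysis would then verify that the residual of $(a,b,c,d)$-degree $6$ lies in $I^3$ by grouping the remaining square-free pieces $ab, cd, ac, bd, ad, bc$ with available $x, y, z, t$ factors to form generators of $I$.

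For (ii), (iii), (iv), the common strategy is to take a monomial $m$ on the left-hand side, identify a generator product $g_1 g_2 g_3$ of the relevant cube dividing $m$ times the colonized variable, and classify by the number $j$ of $x$-carrying generators among the $g_i$'s. For (ii), a degree-$6$ monomial of $\qq$ splits into three degree-$2$ monomials $p_1 p_2 p_3$ in $\qq$, each pairing with an element of $\{1, x, y, z\}$ to form a generator of $I$; the decomposition would be chosen so the three $p_i$'s draw distinct companions from $\{x, y, z\}$. For (iii) $K^3 : x^2 \subseteq L^3$, I would run $j \in \{0, 1, 2, 3\}$: when $j \leq 2$ the factor $x^j$ of $g_1 g_2 g_3$ is absorbed into $x^2$, giving $m \in H^{3-j} (ab, cd)^j \subseteq L^3$; when $j = 3$ the surplus $x$-degree forces $\ell_1 \ell_2 \ell_3 \mid m$, again placing $m$ in $L^3$. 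Identity (iv) would run with $j \in \{0, 1\}$ only (since $m \notin (x)$ caps the $x$-degree of $g_1 g_2 g_3$ at $1$), giving $m \in H^3 \subseteq H^2 L$ or $m \in H^2 L$ directly; the forward containment is immediate from $h_1 h_2 \cdot abx = h_1 h_2 \cdot ab \cdot x$ and its analogue for $cd$.

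The hard part will be the flexibility required for the nontrivial containment in (ii): the naive decomposition of $m \in \qq^6$ can create conflicts — for instance $m = abcd \cdot p_3$ would demand two copies of $x$ to convert both $ab$ and $cd$ into generators — so one must reshuffle $abcd$ among the three complementary pairings $\{ab, cd\}, \{ac, bd\}, \{ad, bc\}$ or exploit a squared variable appearing elsewhere in $m$. The remaining steps should be bookkeeping, assisted by the fact that every degree-$2$ monomial in $\qq$ is either already in $H$ or becomes a generator of $I$ upon multiplication by a single element of $\{x, y, z\}$.
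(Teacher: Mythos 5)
For part (v) your argument is the paper's own in disguise: the observation $L = (a,b)^2 + (c,d)^2$ together with the parity split of the bidegree is exactly the paper's general containment $(J_1 + J_2)^{2s+1} \subseteq (J_1^2 + J_2^2)^s$ specialized to $J_1 = (a,b)$, $J_2 = (c,d)$.

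For parts (i)--(iv) the paper offers no argument at all --- it merely states that these ``can be checked by inspection or with Macaulay2'' --- so your hand-proofs are a genuine alternative route. Your templates for (iii) and (iv), classifying a product $g_1g_2g_3$ of generators of $K$ by the number $j$ of $x$-carrying factors and unwinding the colon, are correct and complete once written out. For (ii), the case analysis you flag as ``the hard part'' is in fact finite and short: among the exponent patterns of a degree-$6$ monomial in $a,b,c,d$ (namely $(6,0,0,0)$, $(5,1,0,0)$, $(4,2,0,0)$, $(4,1,1,0)$, $(3,3,0,0)$, $(3,2,1,0)$, $(3,1,1,1)$, $(2,2,2,0)$, $(2,2,1,1)$ up to permutation), every pattern except $(3,1,1,1)$ contains at least two literal squares, so at most one factor needs a companion from $\{x,y,z\}$; and the pattern $v^3w_1w_2w_3$ factors cyclically as $(vw_1)(vw_2)(vw_3)$, drawing each of $x,y,z$ exactly once. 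So no conflict ever arises. For (i), however, your step ``by pigeonhole some variable appears squared, extracting an element of $H$'' needs a correction: if the two $a$-carrying factors among the $g_i$ are $abx$ and $acy$, then $a^2$ divides their product but the quotient $bcxy$ is not divisible by any generator of $I$, so extracting the literal square the pigeonhole hands you can fail. One must instead choose which square of $H$ to peel off globally from the full $4$-tuple: for $(abx,\, acy,\, adz,\, abx)$, whose product is $a^4b^2cd\,x^2yz$, the right choice is $b^2$, not $a^2$, leaving $a^4cd\,x^2yz = a^2\cdot a^2\cdot cdx\cdot xyz \in I^3$. With that adjustment the argument goes through, but the choice of square and the regrouping of the residual quadratics is more than pure bookkeeping; it is the part that Macaulay2 is doing for the paper.
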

\begin{proof}
The equalities (i)--(iv) can be checked by inspection or with Macaulay2 \cite{GS}.

For (v): this follows from the more general observation: For two arbitrary ideals $J_1,J_2$, and all $s\ge 1$, we have
\[
(J_1+J_2)^{2s+1} \subseteq (J_1^2+J_2^2)^s.
\]
In our case $J_1=(a,b), J_2=(c,d)$.

To prove the observation, we show for all $a,b\ge 0$ with $a+b=2s+1$ that $J_1^aJ_2^b \subseteq (J_1^2+J_2^2)^s$. Without loss of generality, we can assume that $a$ is odd and $b$ is even. So $a=2i+1, b=2j$, where $i,j\ge 0$ and $i+j=s$. Now the conclusion follows from the fact that
\[
J_1^aJ_2^b=J_1^{2i+1}J_2^{2j}\subseteq (J_1^2)^i(J_2^2)^j \subseteq (J_1^2+J_2^2)^s. 
\]
The proof is completed.
\end{proof}

\begin{lem}
With Notation \ref{notn_ideals}, the following hold:
\begin{enumerate}[\quad \rm(i)]
\label{lem_identities_part2}
\item  $\qq^5\subseteq V_1$.
 \item $(I^3,x):(yz)=(x)+HV_1+W_0$.
\item $(I^3,y):(xz)=(y)+HV_2+Habcd(x,z)$.
\item  For all $s\ge 0$, 
$$
(H^sI^3,x):(yz)=(x)+H^{s+1}V_1+W_s.
$$
\item For all $s\ge 0$, 
\begin{align*}
H^{s+1}V_1 \cap H^{s+1}abcd(y,z)=\qq H^{s+1}abcd(y,z).
\end{align*}
\item For all $s\ge 0$, 
\begin{align*}
H^{s+1}V_1 \cap W_s=\qq W_s.
\end{align*}
\item \textup{(}Recall that $W_s =H^{s+1}abcd(y,z)+tcd(c^2,d^2)^{s+2}$.\textup{)} For all $s\ge 0$,
\begin{align*}
H^{s+1}abcd(y,z)  \cap tcd(c^2,d^2)^{s+2} &=abcd(y,z)t(c^2,d^2)^{s+2} \subseteq \qq W_s,\\
H^{s+1}abcd(y,z) \cap \qq W_s &= \qq H^{s+1}abcd(y,z),\\
tcd(c^2,d^2)^{s+2} \cap \qq W_s &= \qq tcd(c^2,d^2)^{s+2}.
\end{align*}
\end{enumerate}
\end{lem}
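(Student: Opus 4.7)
The plan is to verify each of the seven parts by reducing to explicit monomial arithmetic, using the fact that all ideals in sight are monomial so that colons and intersections are computed by elementary formulas on generating sets. I will work with explicit monomial generators of $I^3$, obtained by multiplying three generators from the list $\{a^2,b^2,c^2,d^2,abx,cdx,acy,bdy,adz,bcz,cdyzt\}$; my bookkeeping will group them by how many $x,y,z,t$-factors they carry, since the colon in (ii)--(iv) only cares about the $y,z$-content.

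For part (i), I would enumerate degree-$5$ monomials in $\qq=(a,b,c,d)$ by the pair $(p,q)$ where $p$ is the number of $\{a,b\}$ letters and $q$ the number of $\{c,d\}$ letters, $p+q=5$. The cases $(5,0),(4,1),(1,4),(0,5)$ all give monomials in $H^2$ (pull off two squares from the longer side). The cases $(3,2)$ and $(2,3)$ are the interesting ones: e.g.\ for $(3,2)$ one has either a factor $a^2$ or $b^2$ available (leaving $H\cdot(a,b)(c,d)$) or else the $\{a,b\}$-part is $a^2b$ or $ab^2$, both of which contain $ab$, giving $ab(c^2,d^2)$. Symmetrically for $(2,3)$. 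This exhausts the monomials.

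For (ii)--(iv), the containments $\supseteq$ are checked generator by generator: for instance $W_s$ comes from the single generator $(cdyzt)^2\cdot(c^2,d^2)^{s+2}$-type products in $H^sI^3$, and $H^{s+1}V_1$ comes from triples using $acy,bdy,adz,bcz$ (which, after deleting the $y$ or $z$, produce the $ab,cd,ac,bd,ad,bc$-products generating $V_1$) together with the needed $H$-power. The containments $\subseteq$ are checked by listing the monomial generators $u$ of $H^sI^3$ and examining $u:(yz)$: if $u$ contains no $y$ or no $z$ factor, then $u:(yz)\in(x)$ once we also include $x$, or else $u\in$ one of the listed summands; the case where $u$ is exactly divisible by $yz$ and nothing more uses part (i) (and its analog for $H^{s+1}$) to absorb the remaining $\qq$-part into $H^{s+1}V_1$. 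Part (iii) is the same mechanism with the roles swapped but does not need the $W$-summand because $cdyzt$ contributes a $y$ not a $z$ in the relevant way; I would sanity-check this asymmetry carefully before declaring the formula.

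For (v)--(vii), I will use the standard fact that for monomial ideals the intersection is generated by pairwise lcms of generators. The inclusions $\supseteq$ in (v), (vi), (vii) are obvious. For $\subseteq$, one shows that every lcm of a generator of $H^{s+1}V_1$ with a generator of $H^{s+1}abcd(y,z)$ picks up an extra factor from $\qq$; this reduces to the observation that each generator of $V_1$ already lies in $\qq^2$ (immediate from the definition), so the lcm with $abcd(y,z)$ contains a $\qq$-factor beyond $abcd(y,z)$ itself. Part (vi) is similar, and in (vii) the first display uses $\gcd(H^{s+1}\cdot\text{anything},\ tcd(c^2,d^2)^{s+2})$ to conclude an extra $\qq$-factor, while the last two displays are immediate from the factor $t$ (or its absence) and a $\qq$-divisibility check. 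The main obstacle will be the case analysis in (ii) and (iv): there are many generators of $I^3$, and one must verify both that the stated three summands exhaust the colon and that each monomial landing in $H^{s+1}V_1$ really does so (rather than requiring a stronger summand), which is where part (i) is decisive.
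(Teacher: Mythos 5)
Your overall plan — work monomial by monomial — is sound, and for parts (i)--(iii) it is exactly what the paper does (the paper simply defers to ``inspection or Macaulay2'', so your case-analysis for (i) is just a somewhat more explicit version of the same thing). For (iv) the paper does not go generator by generator over $H^sI^3$; it first factors $H^s$ out of the colon via $\supp H\cap\{x,y,z\}=\emptyset$, applies the already-established part (ii), and then shows $tcd(c^2,d^2)^2H^s\subseteq H^{s+1}V_1+tcd(c^2,d^2)^{s+2}$. Your direct approach would in principle reach the same answer but is considerably more laborious and, as written, you only outline it; the reduction to (ii) is the real idea here and you do not mention it.

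The genuine gap is in (v)--(vii). You reduce the hard inclusion to ``every lcm of a generator of $H^{s+1}V_1$ with a generator of $H^{s+1}abcd(y,z)$ picks up an extra factor from $\qq$'' and justify this by saying that each generator of $V_1$ lies in $\qq^2$. That observation is far too weak. The issue is that a generator of $H^{s+1}V_1$ is of the form $gv$ with $g$ a generator of $H^{s+1}$ and $v$ a generator of $V_1$, while a generator of $H^{s+1}abcd(y,z)$ is of the form $g'abcd\,u$ with $g'$ another generator of $H^{s+1}$; the lcm does not factor as $\mathrm{lcm}(g,g')\cdot\mathrm{lcm}(v,abcd)\cdot u$, because $g$, $v$, $g'$, $abcd$ all share the variables $a,b,c,d$. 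So a degree count on $v$ alone tells you nothing about whether $\mathrm{lcm}(gv,g'abcd)$ lies in $\qq H^{s+1}abcd$. The paper's proof instead argues by contradiction from a structural property of $H^{s+1}V_1$: writing a supposed counterexample as $f=abcdyv_1v_2$ with $v_1$ a minimal generator of $H^{s+1}$ and $\supp v_2\cap\{a,b,c,d\}=\emptyset$, it deduces $abcdv_1\in H^{s+1}V_1$, and then uses the observation that every minimal generator of $H^{s+1}V_1$ is divisible by a product of $a^2,b^2,c^2,d^2$ of degree $s+2$, whereas $abcdv_1$ has only $s+1$ such square factors since $abcd$ contributes none. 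That structural fact is what makes the argument close, and it has no analogue in your sketch. The same gap reappears in your handling of (vi), which the paper reduces to $H^{s+1}V_1\cap tcd(c^2,d^2)^{s+2}=\qq tcd(c^2,d^2)^{s+2}$ and settles using $H^{s+1}V_1\subseteq(a,b)+(c^2,d^2)^{s+3}$, and in (vii), where the paper applies Lemma \ref{lem_intersect} to $ab(y,z)\cap tcd(c^2,d^2)^{s+2}$ rather than an ad hoc $\gcd$ reasoning. Your lcm framework is a legitimate starting point for all three, but to be a proof you must actually carry out the monomial analysis that replaces ``picks up an extra $\qq$-factor'' with a real justification.
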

\begin{proof}
The relations (i)--(iii) can be checked by inspection or with Macaulay2 \cite{GS}.

(iv) Since $\supp H \cap \{x,y,z\}=\emptyset$, we get
\begin{align*}
(H^sI^3,x):(yz)&=H^s((I^3,x):(yz))+(x)=(x)+H^{s+1}V_1+H^sW_0\\
               &=(x)+H^{s+1}V_1+H^{s+1}abcd(y,z)+tcd(c^2,d^2)^2H^s.
\end{align*}
The second equality follows from part (ii).

Since $(c^2,d^2)\subseteq H$, 
$$
W_s=H^{s+1}abcd(y,z)+tcd(c^2,d^2)^{s+2} \subseteq H^{s+1}abcd(y,z)+tcd(c^2,d^2)^2H^s.
$$
Hence
\[
(x)+H^{s+1}V_1+W_s \subseteq (H^sI^3,x):(yz).
\]
To prove the reverse inclusion, we only need to show that
\begin{equation}
\label{eq_inclusion_1}
tcd(c^2,d^2)^2H^s \subseteq H^{s+1}V_1+tcd(c^2,d^2)^{s+2}.
\end{equation}
We have $H^s=(c^2,d^2)^s+(a^2,b^2)H^{s-1}$, hence
\[
tcd(c^2,d^2)^2H^s=tcd(c^2,d^2)^{s+2}+tcd(c^2,d^2)^2(a^2,b^2)H^{s-1}.
\]
Since $cd(a^2,b^2)\subseteq V_1$ and $(c^2,d^2)\subseteq H$, we get
\[
tcd(c^2,d^2)^2(a^2,b^2)H^{s-1}=tcd(a^2,b^2)(c^2,d^2)^2H^{s-1} \subseteq V_1H^{s+1},
\]
which yields \eqref{eq_inclusion_1}.

(v) First we prove the inclusion 
\[
H^{s+1}V_1 \cap H^{s+1}abcd(y,z) \supseteq \qq H^{s+1}abcd(y,z).
\]
This follows from the chain 
$$
\qq abcd\subseteq (a^2,b^2)cd+ab(c^2,d^2) \subseteq V_1.
$$
For the reverse inclusion, take a monomial $f\in H^{s+1}V_1 \cap H^{s+1}abcd(y,z)$, and assume that $f\notin \qq H^{s+1}abcd(y,z)$. We can write $f=abcduv$, where $u\in \{y,z\}$ and $v\in H^{s+1}$. Since $f\notin \qq H^{s+1}abcd(y,z)$, $v\notin \qq H^{s+1}$. Hence $v=v_1v_2$, where $\supp v_2\cap \{a,b,c,d\}=\emptyset$ and $v_1$ is a minimal generator of $H^{s+1}$. Without loss of generality, let $u=y$. Now
\[
f=abcdyv_1v_2=(yv_2)abcdv_1 \in H^{s+1}V_1,
\]
so $abcdv_1 \in H^{s+1}V_1$. On the other hand, any minimal monomial generator of $H^{s+1}V_1$ is divisible by a polynomial of $a^2,b^2,c^2,d^2$ of degree $s+2$. Hence $abcdv_1\notin H^{s+1}V_1$. This contradiction finishes the proof of (v).

(vi) Since $W_s=H^{s+1}abcd(y,z)+tcd(c^2,d^2)^{s+2}$, from (v), it suffices to show that
\[
H^{s+1}V_1\cap tcd(c^2,d^2)^{s+2}=\qq tcd(c^2,d^2)^{s+2}.
\]
First we show that the containment "$\supseteq$". Note that
\[
\qq cd(c^2,d^2)=(c^2,d^2)cd(a,b)+(c^2,d^2)cd(c,d) \subseteq H(a,b)(c,d)+(c^2,d^2)^2 \subseteq V_1.
\]
As $(c^2,d^2)^{s+1}\subseteq H^{s+1}$, we get the desired containment.

For the reverse containment, take a monomial $f\in H^{s+1}V_1\cap tcd(c^2,d^2)^{s+2}$, and assume that $f\notin \qq tcd(c^2,d^2)^{s+2}$. Lemma \ref{lem_intersect} yields
\[
(a,b) \cap tcd(c^2,d^2)^{s+2}=(a,b)tcd(c^2,d^2)^{s+2} \subseteq \qq tcd(c^2,d^2)^{s+2},
\]
hence $f\notin (a,b)$. Note that $H\subseteq (a,b)+(c^2,d^2), V_1\subseteq (a,b)+(c^2,d^2)^2$, so
\[
H^{s+1}V_1\subseteq (a,b)+(c^2,d^2)^{s+3}.
\]
This implies $f\in (c^2,d^2)^{s+3}$. Together with the fact that $f\in tcd(c^2,d^2)^{s+2}$, we see that 
$$
f\in tcd(c^2,d^2)^{s+2}(c,d)\subseteq \qq tcd(c^2,d^2)^{s+2}.
$$
This contradicts the assumption $f\notin \qq tcd(c^2,d^2)^{s+2}$. Hence (vi) is established.

(vii) For the first assertion, it suffices to prove the equality. Applying Lemma \ref{lem_intersect}, we have
\begin{align*}
H^{s+1}abcd(y,z)  \cap tcd(c^2,d^2)^{s+2} &\subseteq ab(y,z) \cap tcd(c^2,d^2)^{s+2}\\
                  &=abcd(y,z)t(c^2,d^2)^{s+2}.
\end{align*}
Clearly
\begin{equation}
\label{eq_inclusion_2}
abcd(y,z)t(c^2,d^2)^{s+2} \subseteq \qq H^{s+1}abcd(y,z) \cap \qq tcd(c^2,d^2)^{s+2},
\end{equation}
hence the reverse containment is true.

Next we prove the second assertion
\[
H^{s+1}abcd(y,z) \cap \qq W_s = \qq H^{s+1}abcd(y,z).
\]
Since $W_s=H^{s+1}abcd(y,z)+tcd(c^2,d^2)^{s+2}$, this reduces to
\[
H^{s+1}abcd(y,z) \cap \qq tcd(c^2,d^2)^{s+2} \subseteq \qq H^{s+1}abcd(y,z).
\]
This follows from the first assertion and the containment \eqref{eq_inclusion_2}. The third assertion is proved similarly.
\end{proof}

\begin{lem}
\label{lem_regularity_powers}
Keep using Notation \ref{notn_ideals}. Then for all $s\ge 1$ and all $i\ge 0$, the following hold:
\begin{enumerate}[\quad \rm (i)]
\item $\reg (H^s\qq^i)=\max\{2s+3,2s+i\}$.
\item $\reg H^sL^{i+1} \le 2s+2i+3$.
\item $\reg H^sV_1 \le 2s+5$.
\end{enumerate}
\end{lem}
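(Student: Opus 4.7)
My plan rests on one combinatorial observation and a short-exact-sequence trick. All regularity calculations take place in the subring $R'=k[a,b,c,d]$, since by Lemma \ref{lem_tensor} the extension to the full ring $R$ preserves regularity. The combinatorial observation is that by pigeonhole every monomial of degree at least $5$ in $a,b,c,d$ is divisible by some $a^2,b^2,c^2,d^2$; hence in $R'$ we have $H\qq^j = \qq^{j+2}$ for every $j\ge 3$, and iterating yields $H^s\qq^i = \qq^{2s+i}$ for all $i\ge 3$. In particular $\qq^N$ in $R'$ coincides with the whole space $R'_{\ge N}$ of polynomials of degree at least $N$.

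For (i), I split into two cases. When $i\ge 3$ the identity above gives $H^s\qq^i = \qq^{2s+i}$, which has a linear resolution of regularity $2s+i = \max\{2s+3, 2s+i\}$. When $0\le i\le 2$, I take as base case the classical formula $\reg H^s = sd+(n-1)(d-1) = 2s+3$ for the $s$-th power of a complete intersection of $n=4$ forms of degree $d=2$, and then iterate Lemma \ref{lem_maxideal_reg}(i), which supplies the equality $\reg(\qq M) = \max\{\reg M,\,1+t_0(M)\}$. Applied successively to $M = H^s, H^s\qq, H^s\qq^2$, the regularity stays at $2s+3$ because $1+t_0$ only catches up with $\reg H^s$ at the step $i=3$. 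The depth hypothesis $\depth M\ge 1$ is satisfied because each $H^s\qq^i$ is $\qq$-primary in $R'$, so $\depth H^s\qq^i = 1$ by the depth lemma.

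For (ii) and (iii) I use a uniform short-exact-sequence argument. The key containments are $\qq^{2i+3}\subseteq L^{i+1}$ (Lemma \ref{lem_identities}(v) with $s$ replaced by $i+1$) and $\qq^5\subseteq V_1$ (Lemma \ref{lem_identities_part2}(i)). Multiplying by $H^s$ and using the identity $H^s\qq^j=\qq^{2s+j}$ for $j\ge 3$, I obtain the inclusions $\qq^{2s+2i+3}\subseteq H^sL^{i+1}$ and $\qq^{2s+5}\subseteq H^sV_1$. Since $L^{i+1}$ and $V_1$ are generated in degrees $2i+2$ and $4$ respectively, while $\qq^N = R'_{\ge N}$, the quotients $H^sL^{i+1}/\qq^{2s+2i+3}$ and $H^sV_1/\qq^{2s+5}$ are concentrated in the single degrees $2s+2i+2$ and $2s+4$, so their regularity equals that single degree. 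The short exact sequence
\[
0 \to \qq^N \to H^sX \to H^sX/\qq^N \to 0
\]
(writing $X$ for $L^{i+1}$ or $V_1$) then yields $\reg H^sX \le \max\{N, N-1\} = N$, which is (ii) with $N=2s+2i+3$ and (iii) with $N=2s+5$.

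The only non-routine input I would invoke without re-derivation is the classical formula $\reg H^s=2s+3$ for powers of a complete intersection of quadrics. Everything else is bookkeeping, and I do not foresee any substantive obstacle.
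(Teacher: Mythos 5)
Your proof is correct and rests on the same ingredients as the paper's: the known regularity $\reg H^s = 2s+3$, the containments $\qq^{2i+3}\subseteq L^{i+1}$ and $\qq^5\subseteq V_1$, Lemma \ref{lem_maxideal_reg}, and a short exact sequence. The one cosmetic difference is that where the paper bounds $\reg H^sX$ from the sequence $0\to H^sX\to H^s\qq^j\to Q\to 0$ with $Q$ annihilated by $\qq$, you instead slot $H^sX$ in the \emph{middle} of $0\to \qq^N\to H^sX\to H^sX/\qq^N\to 0$, using the pigeonhole observation that in $k[a,b,c,d]$ one has $H^s\qq^i=\qq^{2s+i}=R'_{\ge 2s+i}$ once $i\ge 3$ -- a pleasant shortcut, but the same estimate.
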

\begin{proof}
All the relevant ideals are supported in $S=k[a,b,c,d]$, hence to prove the assertions it suffices to work in $S$.

(i) For $i=0$, the equality $\reg H^s=2s+3$ holds, for example, by inspecting the Eagon-Northcott resolution. Using Lemma \ref{lem_maxideal_reg}(ii), we get
\[
\reg H^s\qq^i =\max\{\reg H^s,2s+i\}=\max\{2s+3,2s+i\}.
\]

(ii) Since $L\subseteq \qq^2$, there is an exact sequence
\[
0\to H^sL^{i+1} \to H^s\qq^{2i+2} \to \frac{H^s\qq^{2i+2}}{H^sL^{i+1}} \to 0.
\]
By Lemma \ref{lem_identities}(v), $\qq^{2i+3}\subseteq L^{i+1}$, hence $H^s\qq^{2i+2}/(H^sL^{i+1})$ is killed by $\qq$. Hence it has regularity $2s+2i+2$. From part (i),
\[
\reg (H^s\qq^{2i+2})=\max\{2s+3,2s+2i+2\}\le 2s+2i+3.
\]
Therefore from the last exact sequence, $\reg H^sL^{i+1} \le 2s+2i+3$. 

(iii) Consider the exact sequence
\[
0\to H^sV_1 \to H^s\qq^4 \to \frac{H^s\qq^4}{H^sV_1} \to 0.
\]
By Lemma \ref{lem_identities_part2}(i), $H^s\qq^4/(H^sV_1)$ is killed by $\qq$. Hence it has regularity $2s+4$. From part (i), $\reg H^s\qq^4=2s+4$, so $\reg H^sV_1\le 2s+5$, as desired.
\end{proof}

\begin{proof}[Proof of Theorem \ref{thm_counterexample}]
We proceed through several steps.

\medskip
\noindent
\textsf{Step 1:}  First we start with the easier statement that $\reg (\mm^sI^2)\ge s+8$ for all $s\ge 0$. In fact, it suffices to show that $f=abcdxyzt^{s+1} \in \mm^sI^2\setminus (\mm^{s+1}I^2)$, since $\deg f=s+8$.

Clearly $f=t^s(abx)(cdyzt)\in \mm^sI^2$. Assume that $f\in \mm^{s+1}I^2$ for some $s\ge 0$. There are two minimal generators $u_1,u_2\in I$ such that $f=gu_1u_2$ where $g\in \mm^{s+1}$. As each of $a^2,b^2,c^2,d^2$ does not divide $f$,  both $u_1$ and $u_2$ belong to $ \{abx,cdx,acy,bdy,adz,bcz,cdyzt\}$. We claim that either $u_1$ or $u_2$ must be $cdyzt$.

Indeed, assume the contrary. Then either both $u_1$ and $u_2$ are divisible by one of the variables $a,b,c,d$, or both of them are divisible by one of the variables $x,y,z$. In both case, we get a contradiction, as $f\notin (a^2,b^2,c^2,d^2,x^2,y^2,z^2)$. Thus we can assume $u_1=cdyzt$. This forces $u_2\notin (c,d)$, so the only option is $u_2=abx$. But then $g=f/(u_1u_2)=t^s\notin \mm^{s+1}$. This contradiction shows that $f\notin \mm^{s+1}I^2$.

Now we show by induction on $s\ge 0$ that $\reg (\mm^sI^2)\le s+8$. The case $s=0$ can be checked directly with Macaulay2 \cite{GS}. For $s\ge 1$, we have
\[
\reg(\mm^sI^2) \le s+\reg I^2=s+8.
\]
Therefore $\reg (\mm^sI^2)=s+8$ for all $s$.

\medskip
\noindent
\textsf{Step 2:} We show that $\reg I^n\ge 2n+3$ for all $n\ge 3$. It suffices to show that $f_1=c^{2n-1}dyzt\in I^n\setminus (\mm I^n)$, since $\deg f_1=2n+3$.

Clearly $f_1=(c^2)^{(n-1)}(cdyzt)\in I^n$. Assume that $f_1\in \mm I^n$ for some $n\ge 3$. Since $f_1\notin (a,b,d^2,x)$, we must have
\[
f_1 \in \mm (c^2,cdyzt)^n.
\]
We can write $f_1=vu_1\cdots u_n$, where $v\in \mm, u_i \in \{c^2,cdyzt\}$. One of the $c_i$s must be $cdyzt$, otherwise $f_1\in (c^{2n})$. Assume that $u_n=cdyzt$. Then $vu_1\cdots u_{n-1}=c^{2n-2}$. Since $\deg u_i\ge 2$ and $\deg v\ge 1$, we get
\[
2n-2=\deg(c^{2n-2})\ge 1+2(n-1)=2n-1,
\]
a contradiction. Hence $f_1\notin \mm I^n$.

\medskip
\noindent
\textsf{Step 3:} The remaining space is devoted to the more tricky statement that $\reg I^n\le 2n+3$ for all $n\ge 3$. We will keep using Notation \ref{notn_ideals}. Thanks to Lemma \ref{lem_identities}, $I^{s+3}=H^sI^3$ for all $s\ge 0$, so it suffices to show that $\reg H^sI^3 \le 2s+9$ for all such $s$.

Using Lemma \ref{lem_reg_linearforms} for $M=H^sI^3$ and the variables $x,y,z$, we get
\begin{align*}
&\reg M \le \\
& \max \biggl\{\reg (M,x,y,z), \reg (M,y,z):x+1,\reg (M,x,z):y+1, \reg (M,x,y):z+1,\\
& \qquad \qquad \reg (M,x):(yz)+2, \reg (M,y):(xz)+2, \reg (M,z):(xy)+2,\\
& \qquad \qquad \reg (M:xyz)+3 \biggr\}.
\end{align*}
We show that for every $s\ge 0$, each term in the right-hand side is at most $2s+9$.

\medskip
\noindent
\textsf{Step 4:} Since $x,y,z\notin \supp H$, using  Lemma \ref{lem_identities}(ii), we get
\[
M:xyz=H^s(I^3:xyz)=H^s\qq^6.
\]
By Lemma \ref{lem_regularity_powers}(i), if $s\ge 1$, $\reg (M:xyz)+3=2s+9$. If $s=0$ clearly the same thing is true.

\medskip
\noindent
\textsf{Step 5:} Clearly $(M,x,y,z)=(x,y,z)+H^{s+3}$, hence
\[
\reg (M,x,y,z)=\reg H^{s+3}=2(s+3)+3=2s+9.
\]

\medskip
\noindent
\textsf{Step 6:} We show that $\reg (M,y,z):x +1\le 2s+9$.

We have $(M,y,z):x=(y,z)+H^s(K^3:x)$. As $y$ and $z$ do not belong to the support of $H^s(K^3:x)$, we deduce $\reg (M,y,z):x=\reg H^s(K^3:x)$. It remains to show that 
\begin{equation}
\label{eq_regHsK3:x}
\reg H^s(K^3:x)\le 2s+8.
\end{equation}
 By Lemma \ref{lem_identities}(iii)--(iv), we have
\begin{align*}
K^3:x^2 &=L^3,\\
K^3:x+(x) &=H^2L+(x). 
\end{align*}
This yields the third equality in the following chain
\begin{align*}
\reg H^s(K^3:x) &=\reg (H^sK^3:x) \\
                & \le \max \biggl\{\reg (H^sK^3:x^2)+1,\reg (H^sK^3:x+(x))\biggr\}\\
                &=\max \biggl\{\reg H^s(K^3:x^2)+1, \reg \bigl(H^s(K^3:x+(x))+(x)\bigr)\biggr\}\\
                &=\max \biggl\{\reg H^sL^3+1, \reg \bigl(H^{s+2}L+(x)\bigr)\biggr\}\\
                &=\max \biggl\{\reg H^sL^3+1, \reg H^{s+2}L \biggr\}.
\end{align*}
Now \eqref{eq_regHsK3:x} follows since by Lemma \ref{lem_regularity_powers}(ii)
\[
\max\{\reg H^sL^3, \reg H^{s+2}L\} \le 2s+7.
\]

\medskip
\noindent
\textsf{Step 7:} Denote $K_1=(a^2,b^2,c^2,d^2,acy,bdy)$ and $K_2=(a^2,b^2,c^2,d^2,adz,bcz)$. Arguing as in Step 6, we have
\begin{align*}
(M,x,z):y &=(x,z)+H^s(K_1^3:y),\\
(M,x,y):z &=(x,y)+H^s(K_2^3:z).
\end{align*}
Following the same step, we deduce
\[
\max \{\reg (M,x,z):y+1, \reg (M,x,y):z+1\} \le 2s+9.
\]

\medskip
\noindent
\textsf{Step 8:} Next we show that 
\begin{equation}
\label{eq_ineq_reg(M,x):yz}
\reg (M,x):(yz)+2\le 2s+9.
\end{equation}
From Lemma \ref{lem_identities_part2}(iv),
\[
(M,x):(yz)=(x)+H^{s+1}V_1+W,
\]
where for simplicity $W=H^{s+1}abcd(y,z)+tcd(c^2,d^2)^{s+2}$. As $x$ does not belong to the support of $H^{s+1}V_1+W$, $\reg (M,x):(yz)=\reg (H^{s+1}V_1+W)$. The equality 
\[
H^{s+1}V_1\cap W=\qq W
\]
from Lemma \ref{lem_identities_part2}(vi) yields an exact sequence
\[
0\to H^{s+1}V_1 \to H^{s+1}V_1+W \to \frac{W}{\qq W}\to 0.
\]
So we obtain
\begin{equation}
\label{eq_ineq_bound_reg_(M,x):yz}
\reg (M,x):(yz)=\reg (H^{s+1}V_1+W) \le \max \biggl\{\reg H^{s+1}V_1, \reg \frac{W}{\qq W}\biggr\}.
\end{equation}
By Lemma \ref{lem_identities_part2}(vii) we get the first equality in the following display
\begin{align*}
\frac{W}{\qq W} &= \frac{H^{s+1}abcd(y,z)}{\qq H^{s+1}abcd(y,z)} \bigoplus \frac{tcd(c^2,d^2)^{s+2}}{\qq tcd(c^2,d^2)^{s+2}}\\
                & \cong \left(\frac{H^{s+1}abcd}{\qq H^{s+1}abcd}\otimes_k (y,z)\right) \bigoplus \frac{cd(c^2,d^2)^{s+2}}{\qq cd(c^2,d^2)^{s+2}}(-1).
\end{align*}
Working in $S$, we get
\[
\reg \frac{H^{s+1}abcd}{\qq H^{s+1}abcd}=\reg \frac{cd(c^2,d^2)^{s+2}}{\qq cd(c^2,d^2)^{s+2}}=2s+6,
\]
hence together with Lemma \ref{lem_tensor}, $\reg (W/\qq W)=2s+7$. Thanks to \eqref{eq_ineq_bound_reg_(M,x):yz}, in order to get \eqref{eq_ineq_reg(M,x):yz}, we are left with the inequality
\[
\reg H^{s+1}V_1\le 2s+7.
\]
The latter follows from Lemma \ref{lem_regularity_powers}(iii).

\medskip
\noindent
\textsf{Step 9:} Next we show that 
\begin{equation}
\label{eq_ineq_reg_(M,y):xz}
\reg (M,y):(xz)+2\le 2s+9.
\end{equation}

By Lemma \ref{lem_identities_part2}(iii), 
\[
(M,y):(xz)=H^s((I^3,y):(xz))+(y)=H^{s+1}V_2+H^{s+1}abcd(x,z)+(y).
\]
As $y$ does not belong to the support of $H^{s+1}V_2+H^{s+1}abcd(x,z)$,
\[
\reg (M,y):(xz)=\reg (H^{s+1}V_2+H^{s+1}abcd(x,z)).
\]
By symmetry, we deduce from Lemma \ref{lem_identities_part2}(v) the equality  
\[
H^{s+1}V_2\cap H^{s+1}abcd(x,z)=\qq H^{s+1}abcd(x,z).
\]
The latter yields an exact sequence
\[
0\to H^{s+1}V_2 \to H^{s+1}V_2+H^{s+1}abcd(x,z) \to \frac{H^{s+1}abcd(x,z)}{\qq H^{s+1}abcd(x,z)}\to 0.
\]
The last term in the sequence is isomorphic to 
\[
\frac{H^{s+1}abcd}{\qq H^{s+1}abcd}\otimes_k (x,z).
\]
Clearly
\[
\reg \frac{H^{s+1}abcd}{\qq H^{s+1}abcd}=2s+6,
\]
so by Lemma \ref{lem_tensor},
\[
\reg \frac{H^{s+1}abcd}{\qq H^{s+1}abcd}\otimes_k (x,z)=2s+7.
\]
Therefore from the last exact sequence, we reduce \eqref{eq_ineq_reg_(M,y):xz} to showing that 
$$
\reg H^{s+1}V_2 \le 2s+7.
$$
This follows by symmetry from Lemma \ref{lem_regularity_powers}(iii).

\medskip
\noindent
\textsf{Step 10:} By symmetry, arguing as in Step 9, we get $\reg (M,z):(xy)+2\le 2s+9$.

Putting everything together, we get $\reg M\le 2s+9$, as desired.
\end{proof}


{\small 
\begin{finrem*}
Some discussions in this paper, for instance a slightly more general form of Corollary \ref{cor_Koszul_powers}, appeared in Section 5 of the unpublished preprint \cite{NgV1a}. Nevertheless, we have better proofs and much more interesting materials to present here, thanks to the additional input brought in by Section \ref{sect_Bettisplit}. 

An abridged version of this paper \cite{NgV3} has appeared online, and will be published in the Acta Mathematica Vietnamica. Note that the discussion between lines -1 to -6 concerning certain result of \c{S}ega before Lemma 5.3 and Lemma 5.3(ii) in {\it loc. cit.} are not quite accurate, and should be corrected as in Remark \ref{rem_Sega} and Lemma \ref{lem_maxideal_reg}(ii) (respectively), of this paper. The remaining results of \cite{NgV3} are not affected by this small error.
\end{finrem*} 
}

\section*{Acknowledgments}
The first named author is grateful to the hospitality and support of the Department of Mathematics, Otto von Guericke Universit\"at Magdeburg, where large parts of this work were finished. The second named author is partially supported by the Vietnam National Foundation for Science and Technology Development under grant number 101.04-2016.21.

\end{document}